\documentclass[preprint,12pt]{elsarticle}

\usepackage[a4paper,margin=1in]{geometry}
\usepackage{amsmath,amssymb,amsthm,mathtools}
\usepackage{booktabs,array}
\usepackage{enumitem}
\usepackage{microtype}
\usepackage[colorlinks=true,linkcolor=blue,citecolor=blue,urlcolor=blue]{hyperref}

\numberwithin{equation}{section}

\newtheorem{theorem}{Theorem}[section]
\newtheorem{proposition}[theorem]{Proposition}
\newtheorem{lemma}[theorem]{Lemma}
\newtheorem{corollary}[theorem]{Corollary}
\newtheorem{remark}[theorem]{Remark}

\newcommand{\R}{\mathbb R}
\newcommand{\Sph}{\mathbb S}

\newcommand{\norm}[2]{\left\|#1\right\|_{#2}}

\newcommand{\pa}{\partial}
\newcommand{\grad}{\nabla}
\newcommand{\Div}{\operatorname{div}}
\newcommand{\curl}{\operatorname{curl}}
\newcommand{\be}{\beta}
\newcommand{\ta}{\tau}
\newcommand{\nv}{\nu}

\newcommand{\F}{\mathcal F}
\newcommand{\D}{\mathcal D}
\newcommand{\A}{\mathcal A}

\newcommand{\Om}{\Omega}
\newcommand{\eps}{\varepsilon}

\newcommand{\Riesz}{\mathcal R}

\begin{document}

\begin{frontmatter}

\title{Moving One-Component Regularity Criteria for the 3D Incompressible MHD Equations}

\author[addr1]{Maotuo Guo\corref{cor1}}
\ead{18813122370@163.com}
\cortext[cor1]{Corresponding author: Maotuo Guo, Department of Mathematics, Harbin University of Science and Technology, Harbin, China. Email address: 18813122370@163.com.}
\address[addr1]{Department of Mathematics, Harbin University of Science and Technology, Harbin, China}

\begin{abstract}
We establish a scaling-critical continuation criterion for the
three-dimensional incompressible magnetohydrodynamic equations in $\R^3$ with
arbitrary positive viscosity and magnetic diffusivity.  Let $\beta(t)$ be a
unit vector that is piecewise $H^1$ in time and has only finitely many jumps.
For $3\le p<\infty$, set $\gamma_p=2p/(2p-3)$.  We prove that an $H^1$ strong
solution can be continued beyond $T_*$ if
\[
  \int_0^{T_*}\left(
  \|u(t)\cdot\beta(t)\|_{\dot H^{3/2}}^2+
  \|b(t)\cdot\beta(t)\|_{\dot H^{3/2}}^2+
  \|\beta(t)\cdot\operatorname{curl}b(t)\|_{L^p}^{\gamma_p}
  \right)\,dt<\infty .
\]
Thus the observed component may vary in time, and the magnetic assumption is
reduced to one moving magnetic component together with one Serrin-type
current-density component.  The proof is based on a moving-frame formulation of
the vorticity-current system, an anisotropic product estimate adapted to the
moving frame, and a horizontal Hodge decomposition that controls the
current-Jacobian residual.
\end{abstract}

\begin{keyword}
magnetohydrodynamics \sep regularity criterion \sep one velocity component \sep moving direction
\MSC[2020] 35Q35 \sep 35B65 \sep 76W05 \sep 76D03
\end{keyword}

\end{frontmatter}

\section{Introduction and main result}\label{sec:introduction}

We consider the Cauchy problem for the three-dimensional incompressible MHD
system
\begin{equation}\label{eq:MHD}
\left\{
\begin{aligned}
&\pa_t u+u\cdot\grad u-b\cdot\grad b-\mu\Delta u+\grad P=0,\\
&\pa_t b+u\cdot\grad b-b\cdot\grad u-\eta\Delta b=0,\\
&\Div u=\Div b=0,\\
&(u,b)|_{t=0}=(u_0,b_0),
\end{aligned}\right.
\end{equation}
where $u$ is the velocity field, $b$ is the magnetic field, and
\begin{equation}\label{eq:positivecoefficients}
  \mu>0,\qquad \eta>0.
\end{equation}
The two coefficients are allowed to be different.  We work on $\R^3$ and assume
\[
  (u_0,b_0)\in H^1(\R^3),\qquad \Div u_0=\Div b_0=0.
\]
The standard $H^1$ strong solution satisfies
\[
  (u,b)\in C([0,T);H^1)\cap L^2_{\rm loc}([0,T);\dot H^2).
\]
The basic energy inequality is
\begin{equation}\label{eq:energy}
\begin{aligned}
&\|u(t)\|_2^2+\|b(t)\|_2^2
  +2\int_0^t\bigl(\mu\|\nabla u(s)\|_2^2+
  \eta\|\nabla b(s)\|_2^2\bigr)\,ds       \\
&\qquad \le
  \|u_0\|_2^2+\|b_0\|_2^2=:E_0.
\end{aligned}
\end{equation}
We set
\begin{equation}\label{eq:lambda}
  \lambda:=\min\{\mu,\eta\}>0.
\end{equation}
All constants below may depend on $\mu$ and $\eta$ through negative powers of
$\lambda$, but no estimate uses the special relation $\mu=\eta$.

The regularity problem for three-dimensional incompressible fluids has a long
history, beginning with Leray's work and the subsequent Prodi--Serrin theory
\cite{Leray,Prodi1959,Serrin1962}.  Classical continuation and endpoint
criteria for the Navier--Stokes and Euler equations also motivate the search for
scaling-critical quantities that preclude finite-time breakdown
\cite{BealeKatoMajda,EscauriazaSereginSverak}.  For MHD, the basic weak and
strong solution theory goes back to Duvaut--Lions and Sermange--Temam
\cite{DuvautLions,SermangeTemam}; see also the regularity criteria in
\cite{HeXin2005,WuMHD,CaoWuMHD,JiLee2010,JiaZhou2012Partial,CheskidovDai2025}.
A central theme in this theory is to understand how little information is needed
to rule out singularity formation.

In the Navier--Stokes case, Chemin and Zhang proved a critical one-component
criterion \cite{CheminZhang}, and Chemin--Zhang--Zhang treated the general
one-component case \cite{CheminZhangZhang}.  Liu and Zhang recently introduced a
moving-direction criterion in which the controlled quantity is
$u(t)\cdot\be(t)$ for a time-dependent unit vector $\be(t)$
\cite{LiuZhang2024}.  Component-reduced criteria for MHD are more rigid because
the induction equation couples the velocity and magnetic fields at the level of
both vorticity and current density.  Several different forms of ``one-component''
information have therefore appeared in the MHD literature.  Criteria involving
one velocity component together with the full magnetic field were obtained in
\cite{JiaZhou2012Partial,LiuMHD}, and the critical Sobolev range in this
full-field setting was sharpened by Han--Zhao \cite{HanZhao2020}.  Criteria
involving one velocity component and several magnetic components were developed
in \cite{JiaZhou2016OneVelocity}.  Yamazaki obtained criteria involving one
velocity component and one current-density component and developed related
component-reduction arguments \cite{Yamazaki2014,YamazakiComponentReduction};
Lorentz-space versions of this velocity-current type were later proved in
\cite{AgarwalGalaRagusa2020}.  Yamazaki also studied criteria involving one
velocity component and one vorticity component \cite{Yamazaki2016Vorticity}.
Local Serrin-type criteria involving one velocity component and one magnetic
component were obtained by Chen, Qian and Zhang for suitable weak solutions in
the equal-coefficient case \cite{ChenQianZhangMHD}.  Directional-derivative
criteria, which are different in nature from component criteria, were studied in
\cite{QiaoLiu2018,QianSuZhang2025}.

The purpose of this paper is to prove a moving-direction analogue for the
three-dimensional viscous and resistive MHD equations.  The criterion forms a
Serrin-type scaling-critical family.  Fix $3\le p<\infty$ and set
\[
  \gamma_p:=\frac{2p}{2p-3}.
\]
This family is scaling invariant in the following precise sense.  If
\[
  u_\rho(t,x)=\rho u(\rho^2t,\rho x),\qquad
  b_\rho(t,x)=\rho b(\rho^2t,\rho x),
\]
and if the direction is rescaled by
\[
  \be_\rho(t)=\be(\rho^2t),
\]
then
\[
  \|u_\rho(t)\cdot\be_\rho(t)\|_{\dot H^{3/2}}^2\,dt,
  \qquad
  \|b_\rho(t)\cdot\be_\rho(t)\|_{\dot H^{3/2}}^2\,dt,
  \qquad
  \|\be_\rho(t)\cdot\curl b_\rho(t)\|_{L^p}^{\gamma_p}\,dt
\]
are invariant.  The result has three distinctive features.  First, the observed direction is
allowed to move in time: the controlled velocity component is
$u(t)\cdot\be(t)$ with $\be$ piecewise $H^1$ and with finitely many jumps.
Second, the argument is carried out in the original $(u,b)$ variables and
therefore applies to all positive viscosity--diffusivity pairs $(\mu,\eta)$,
without assuming $\mu=\eta$.  Third, the magnetic assumption does not involve
the full magnetic field.  It consists of one moving magnetic component,
$b(t)\cdot\be(t)$, and one Serrin-type current-density component,
$\be(t)\cdot\curl b(t)$.

This criterion cannot be obtained by applying a fixed-direction result after a
time-dependent rotation.  Such a rotation produces commutator terms involving
the derivative of the moving frame.  Moreover, in the MHD system the
current-density equation contains a horizontal Jacobian residual that has no
counterpart in the Navier--Stokes equations.  Controlling this residual is the
main new difficulty.  The residual contains the transverse magnetic gradient
$\nabla_{\be^\perp}b^{\be^\perp}$.  A two-dimensional Hodge decomposition in the
transverse variables splits this gradient into a horizontal-curl part and a
horizontal-divergence part:
\[
  \nabla_{\be^\perp}b^{\be^\perp}
  =\mathcal R_{\be^\perp}(\be\cdot\curl b)
  +\mathcal S_{\be^\perp}\partial_\be(b\cdot\be).
\]
The first part is estimated in $L^p$, while the second is estimated in $L^3$ by
$\|b\cdot\be\|_{\dot H^{3/2}}$.  Young's inequality then produces exactly the
critical exponent $\gamma_p=2p/(2p-3)$.  This is the mechanism that replaces the
full magnetic $\dot H^{3/2}$ norm by the magnetic component plus Serrin-type
current-density component appearing in the main criterion.  The additional
current-density component is therefore structural rather than technical.

Before stating the result, we use the following minimal terminology.  An
admissible moving direction on $[0,T)$ is a map $\be:[0,T)\to\Sph^2$ with
finitely many jump discontinuities in $(0,T)$ and with $\be'\in L^2$ on each
continuity interval.  The associated moving frame, component notation, and
horizontal Hodge representation are fixed in Section~\ref{sec:tools}.

\begin{theorem}\label{thm:main}
Let $(u,b)$ be the maximal $H^1$ strong solution of \eqref{eq:MHD} on
$[0,T_*)$ with $\mu,\eta>0$, and let $\be$ be an admissible moving direction on
$[0,T_*)$.  Fix
$3\le p<\infty$ and set
\[
  \gamma_p=\frac{2p}{2p-3},\qquad
  u^\be=u\cdot\be(t),\qquad
  b^\be=b\cdot\be(t),\qquad
  j^\be=\be(t)\cdot\curl b.
\]
If $T_*<\infty$ and
\begin{equation}\label{eq:criterionAssumption}
  \int_0^{T_*}\Bigl(
  \|u^\be(t)\|_{\dot H^{3/2}}^2
  +\|b^\be(t)\|_{\dot H^{3/2}}^2
  +\|j^\be(t)\|_{L^p}^{\gamma_p}\Bigr)\,dt<\infty,
\end{equation}
then $(u,b)$ extends as an $H^1$ strong solution beyond $T_*$.  Consequently, if
$T_*<\infty$, then for every admissible moving direction $\be$ on $[0,T_*)$ and
every $3\le p<\infty$,
\begin{equation}\label{eq:blowupCriterion}
  \int_0^{T_*}\Bigl(
  \|u^\be(t)\|_{\dot H^{3/2}}^2
  +\|b^\be(t)\|_{\dot H^{3/2}}^2
  +\|j^\be(t)\|_{L^p}^{\gamma_p}\Bigr)\,dt=\infty.
\end{equation}
\end{theorem}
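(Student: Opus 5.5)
The plan is to reduce everything to a finite number of continuity intervals of $\be$, and on each to run a Chemin--Zhang type energy argument in the moving frame. Since $\be$ has finitely many jumps, $[0,T_*)$ splits into finitely many intervals $[t_{k},t_{k+1})$ on which $\be\in H^1$. On each interval the $H^1$ norm of $(u,b)$ at the left endpoint is finite, except possibly on the last interval $[t_N,T_*)$. It therefore suffices to prove that on an interval $[t_N,T_*)$ with $\be\in H^1(t_N,T_*)$ and $(u,b)(t_N)\in H^1$, the quantity $\sup_{t<T_*}\|(u,b)(t)\|_{H^1}$ stays bounded. The standard continuation theory then extends the solution. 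The second statement \eqref{eq:blowupCriterion} is just the contrapositive.

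On that interval I complete $\be(t)$ to an orthonormal frame $(\be,e_1,e_2)$ that is $H^1$ in time, for instance by parallel transport, so that $|e_i'|\lesssim|\be'|$. In this frame I write the vorticity $\omega=\curl u$ and current $j=\curl b$ in components. The key unknowns, following Chemin--Zhang, are the moving "vertical" vorticity and current $\omega^\be=\be\cdot\omega$, $j^\be$, together with $u^\be,b^\be$ and $\pa_\be u^\be=-\Div_{\be^\perp}u^{\be^\perp}$. The plan is as follows.

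\textbf{Step one.} Derive the equations for $\omega^\be$ and $j^\be$. Differentiating $\be(t)$ produces a commutator $\be'\cdot\omega$, which I bound by $|\be'|\,\|\omega\|_2$. This is an $L^2$-in-time weight times the quantity being estimated, so it closes by Gronwall.

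\textbf{Step two.} Estimate $\|\omega^\be\|_{2}^2+\|j^\be\|_2^2$ together with $\|\nabla u^\be\|_2^2+\|\nabla b^\be\|_2^2$. Each nonlinear term is written in the transverse/longitudinal splitting. The anisotropic product law $\|fg h\|_{L^1}\lesssim \|f\|_{\dot H^{1/2}}\|g\|_{\dot H^{1/2}}\|h\|_{\dot H^{1/2}}$ is adapted to the frame, with $\|\nabla u^\be\|_{\dot H^{1/2}}$ supplying the critical $L^2_t$ weight. The transverse velocity is recovered from $\omega^\be$ and $\pa_\be u^\be$ via the two-dimensional Biot--Savart law $u^{\be^\perp}=\nabla^\perp_{\be^\perp}\Delta_{\be^\perp}^{-1}\omega^\be-\nabla_{\be^\perp}\Delta_{\be^\perp}^{-1}\pa_\be u^\be$, and similarly for $b$ with $j^\be$ and $b^\be$.

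\textbf{Step three.} This is the main obstacle: the horizontal Jacobian residual in the $j^\be$ equation. It has the schematic form $\int \nabla_{\be^\perp}b^{\be^\perp}\,\nabla_{\be^\perp}u^{\be^\perp}\,j^\be$ and has no Navier--Stokes analogue. I would use the Hodge decomposition $\nabla_{\be^\perp}b^{\be^\perp}=\Riesz_{\be^\perp}j^\be+\mathcal S_{\be^\perp}\pa_\be b^\be$. For the first part, Hölder gives $\|j^\be\|_p\|\nabla u\|_{2p/(p-2)}\|j^\be\|_2$. The last factor is bounded by interpolation between $L^2$ and $\dot H^1$ of $(\omega^\be,j^\be,\omega,j)$, and Young's inequality produces $\|j^\be\|_p^{\gamma_p}$ times the energy plus an absorbable dissipation term. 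For the second part, $\|\pa_\be b^\be\|_3\lesssim\|b^\be\|_{\dot H^{3/2}}$, and Hölder with $L^3\times L^3\times L^3$ using $\|\nabla u\|_3\lesssim\|\nabla u\|_2^{1/2}\|\nabla^2u\|_2^{1/2}$ gives an $L^2_t$ weight.

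\textbf{Step four.} Combine to get $\frac{d}{dt}Y+\lambda D\le C\,(\|u^\be\|_{\dot H^{3/2}}^2+\|b^\be\|_{\dot H^{3/2}}^2+\|j^\be\|_p^{\gamma_p}+|\be'|^2+1)\,Y\log(e+Y)$, at worst. Gronwall then bounds $Y$. Finally, an $H^1$ energy estimate for the full $(u,b)$ uses the controlled quantities: the full $\|\nabla u\|_2,\|\nabla b\|_2$ are recovered from $\omega^\be,j^\be,\nabla u^\be,\nabla b^\be$ through the Biot--Savart representations, with one more Gronwall step. This gives the uniform $H^1$ bound.

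The hardest point is the residual of Step three. There the powers must match exactly $\gamma_p$ and must not require the full $\|b\|_{\dot H^{3/2}}$. I would first try to place $\nabla u$ in the dissipation via the Gagliardo--Nirenberg inequality and check the exponent bookkeeping at $p=3$.
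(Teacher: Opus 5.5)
Your outline matches the paper's: reduction to the last continuity interval, a moving-frame energy for $\omega^\beta$, $j^\beta$ and the $\beta$-components, the split $\nabla_{\beta^\perp}b^{\beta^\perp}=\mathcal R_{\beta^\perp}j^\beta+\mathcal S_{\beta^\perp}\partial_\beta b^\beta$, a log-Gronwall, then an $H^1$ step. As written, however, the moving-frame estimate is circular. It repeatedly uses full-gradient quantities that the moving energy does not control, while the full $H^1$ bound is what you are trying to prove.

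\emph{Step three.} You put $\nabla u$ in $L^{2p/(p-2)}$ and in $L^3$ and interpolate with $\|\nabla^2u\|_2$. The dissipation of $\omega^\beta,j^\beta,\nabla u^\beta,\nabla b^\beta$ does not control $\|\nabla^2u\|_2$; it misses $\partial_\beta^2u^{\beta^\perp}$. The missing observation is that the residual contains only $\nabla_{\beta^\perp}u^{\beta^\perp}$. The same 2D Hodge identity gives $\|\nabla_{\beta^\perp}u^{\beta^\perp}\|_2\lesssim\|\omega^\beta\|_2+\|\partial_\beta u^\beta\|_2\lesssim\mathcal F^{1/2}$, where $\mathcal F=\|\omega^\beta\|_2^2+\|\partial_\beta u^\beta\|_2^2+\|j^\beta\|_2^2+\|\partial_\beta b^\beta\|_2^2$ and $\mathcal D$ is its dissipation. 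The paper puts this factor in $L^2$ and takes all the dissipation from the current. With $r_p=2p/(p-2)$, this gives $\|j^\beta\|_p\|\nabla_{\beta^\perp}u^{\beta^\perp}\|_2\|j^\beta\|_{L^{r_p}}\lesssim\|j^\beta\|_p\mathcal F^{1-3/(2p)}\mathcal D^{3/(2p)}$. Young with exponents $2p/3$ and $\gamma_p$ then yields exactly $\varepsilon\mathcal D+C\|j^\beta\|_p^{\gamma_p}\mathcal F$, so the bookkeeping you worried about is automatic. The $\mathcal S$-part is bounded by $\|\partial_\beta b^\beta\|_3\|\nabla_{\beta^\perp}u^{\beta^\perp}\|_2\|j^\beta\|_6$.

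\emph{Step one.} Since $\beta'\perp\beta$, $\beta'\cdot\operatorname{curl}u$ is a transverse vorticity component containing $\partial_\beta u^{\beta^\perp}$. So $|\beta'|\,\|\operatorname{curl}u\|_2$ is not a multiple of the quantity being estimated. Move the derivative onto the test function and use the basic energy instead: $|\langle\beta'\cdot\operatorname{curl}u,\omega^\beta\rangle|\le|\beta'|\|u\|_2\|\nabla\omega^\beta\|_2\le\varepsilon\mathcal D+C|\beta'|^2E_0$. This is why $\beta'\in L^2_t$ is the natural hypothesis.

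\emph{Steps two and four.} Mixed terms such as $\partial_\beta u^{\beta^\perp}\cdot\nabla^\perp_{\beta^\perp}u^\beta\,\omega^\beta$ or $\partial_\beta b^{\beta^\perp}\cdot\nabla_{\beta^\perp}u^\beta\,\partial_\beta b^\beta$ contain $\partial_\beta u^{\beta^\perp}$ (resp.\ $\partial_\beta b^{\beta^\perp}$). These are not boundedly determined by the moving quantities, because the multiplier $|\xi\cdot\beta|/|\xi\times\beta|$ is unbounded. Your isotropic $\dot H^{1/2}$ trilinear bound would therefore need $\partial_\beta u^{\beta^\perp}\in\dot H^{1/2}$, i.e.\ full $\dot H^{3/2}$ control. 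The paper, following Liu--Zhang, uses an anisotropic product law with a loss $\sigma$ and the interpolation $\|\partial_\beta V^{\beta^\perp}\|_{\dot H^{1-\sigma,0}_\beta}\lesssim\|\nabla V\|_2^\sigma\mathcal D^{(1-\sigma)/2}$. This produces a term $\sigma^{-1}\mathcal F^{1/(1-\sigma)}X^{(1-2\sigma)/(1-\sigma)}\mathcal A^{\sigma/(1-\sigma)}$, with $X=\|u^\beta\|_{\dot H^{3/2}}^2+\|b^\beta\|_{\dot H^{3/2}}^2$ and $\mathcal A=\|\nabla u\|_2^2+\|\nabla b\|_2^2\in L^1_t$ by the energy inequality. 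Optimizing $\sigma\sim1/\log(e+\cdot)$ is where the logarithm comes from.

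For the same reason, in your final step the full gradient cannot be recovered from $\omega^\beta,j^\beta,\nabla u^\beta,\nabla b^\beta$ by Biot--Savart. What closes the $H^1$ estimate is a structural fact: every cubic term in the $H^1$ identity contains a factor $\nabla_{\beta^\perp}u$ or $\nabla_{\beta^\perp}b$. When both derivative indices equal $\beta$, one uses $\partial_\beta u^\beta=-\nabla_{\beta^\perp}\cdot u^{\beta^\perp}$. Then $\nabla_{\beta^\perp}u^{\beta^\perp}\in L^2_tL^3$ follows from $\sup\mathcal F+\int\mathcal D<\infty$ by Hodge and interpolation, $\nabla_{\beta^\perp}u^\beta\in L^2_tL^3$ follows from the hypothesis, and Gronwall applies.
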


\begin{corollary}[Fixed direction]\label{cor:fixed}
For any fixed unit vector $e\in\Sph^2$ and any $3\le p<\infty$, if the maximal
time $T_*$ is finite, then
\[
  \int_0^{T_*}\Bigl(
  \|u(t)\cdot e\|_{\dot H^{3/2}}^2
  +\|b(t)\cdot e\|_{\dot H^{3/2}}^2
  +\|e\cdot\curl b(t)\|_{L^p}^{\gamma_p}\Bigr)\,dt=\infty.
\]
\end{corollary}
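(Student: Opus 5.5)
The statement to prove is Corollary~\ref{cor:fixed}. It follows directly from Theorem~\ref{thm:main} by choosing a constant direction.

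Fix $e\in\Sph^2$ and $3\le p<\infty$, and assume the maximal time $T_*$ is finite. Define $\be(t)\equiv e$ on $[0,T_*)$. This $\be$ is an admissible moving direction in the sense fixed before Theorem~\ref{thm:main}:
\begin{itemize}
\item it takes values in $\Sph^2$;
\item it has no jump discontinuities (zero is a finite number of jumps);
\item on the single continuity interval $[0,T_*)$ its derivative vanishes, so $\be'=0\in L^2$.
\end{itemize}

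For this choice the three quantities in Theorem~\ref{thm:main} become the fixed-direction quantities:
\[
u^\be=u\cdot e,\qquad b^\be=b\cdot e,\qquad j^\be=e\cdot\curl b .
\]
Indeed $\be$ does not depend on time, and $\curl$ acts only in $x$, so no frame-rotation terms appear. Hence the integrand in \eqref{eq:blowupCriterion} is exactly the integrand in the corollary.

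Now apply the second part of Theorem~\ref{thm:main}. Since $T_*<\infty$, the integral in \eqref{eq:blowupCriterion} is infinite for every admissible moving direction. In particular it is infinite for $\be\equiv e$, which is the claimed identity.

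Equivalently, argue by contradiction. If the fixed-direction integral were finite, then \eqref{eq:criterionAssumption} would hold for $\be\equiv e$. The first part of Theorem~\ref{thm:main} would then extend the solution beyond $T_*$, contradicting the maximality of $T_*$.

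There is no real obstacle here. The only point that needs checking is that a constant map satisfies the admissibility definition, which it does trivially.
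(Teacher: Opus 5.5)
Your proposal is correct and is essentially the paper's argument. The paper gives no separate proof: the corollary is the special case $\be\equiv e$ of Theorem~\ref{thm:main}, and the only point to check is that a constant direction belongs to $\Om(T_*)$, which you verify.
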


\begin{remark}
The extra current-density component in Theorem~\ref{thm:main} is structural.  In
the moving-frame current equation, the transport terms and the normal stretching
terms cancel, but a horizontal Jacobian residual remains:
\[
  2\sum_{\ell\in\{\ta,\nv\}}J_\be(b^\ell,u^\ell),
\]
which involves $\nabla_{\be^\perp}b^{\be^\perp}$.  The transverse Hodge
decomposition controls this gradient through
$j^\be=\be\cdot\curl b$ and $\partial_\be b^\be$, and only the latter follows
from the magnetic component $b^\be$.
\end{remark}

\begin{remark}
The lower bound $p\ge3$ is used when the current part is interpolated with the
$H^1$ control, so that $r_p=2p/(p-2)\le6$.  The exclusion of the endpoint
$p=\infty$ is tied to the horizontal Calder\'on--Zygmund step in the
current-Jacobian estimate: the horizontal Riesz transforms used there are
bounded on $L^p$ for every finite $1<p<\infty$, but not on $L^\infty$ in the
strong sense.  Reaching the endpoint would therefore require a BMO-type or other
endpoint replacement, and is not pursued here.
\end{remark}

The proof of Theorem~\ref{thm:main} is organized around four estimates.  First,
we derive the moving-frame scalar system for the velocity vorticity component,
the velocity directional divergence, the current-density component, and the
magnetic directional divergence.  Second, we prove a horizontal magnetic Hodge
estimate and use it to control the current-Jacobian residual by the moving
magnetic and current components.  Third, a moving anisotropic product estimate,
adapted from the Liu--Zhang mechanism to the MHD vorticity-current variables and
the relevant zero-order multipliers, is combined with this residual estimate to
obtain a closed moving-frame vorticity-current bound.  Finally, the resulting
transverse-gradient control is used together with an anisotropic MHD $H^1$
structure to close the full $H^1$ estimate.

The remainder of the paper is organized as follows.  Section~\ref{sec:tools}
fixes the admissible moving directions, the associated moving-frame notation,
and the analytic estimates used below.  Section~\ref{sec:vorticity_current}
derives the moving vorticity-current structure.  Section~\ref{sec:nullform}
proves the horizontal current-Jacobian estimate.  Section~\ref{sec:moving_apriori}
combines it with the moving anisotropic estimates and derives the moving
horizontal-gradient bounds.  Section~\ref{sec:proof_main} first closes the full
$H^1$ estimate and then completes the proof of the main theorem.  The appendices
record the algebraic expansions and the treatment of finitely many frame jumps.

\section{Analytic tools}\label{sec:tools}

We first fix the moving-frame notation and the horizontal Hodge decomposition
used throughout the proof.  We then recall the structured iteration lemma and
establish the moving anisotropic product estimate needed below.  The latter is
modeled on the moving one-velocity-component Navier--Stokes method of
Liu--Zhang \cite{LiuZhang2024}, but is formulated here for the MHD
vorticity-current variables and includes the zero-order multipliers that occur
in the current equation.  A further estimate specific to the MHD coupling is
proved in Section~\ref{sec:nullform}.

\subsection{Moving-frame notation and horizontal Hodge decomposition}

For $T>0$, define the class of admissible directions
\begin{equation}\label{eq:Omega}
\Om(T)=\left\{\be:[0,T)\to\Sph^2:
\begin{array}{l}
\be \hbox{ has finitely many jump discontinuities in }(0,T),\\
\be'\in L^2(I)\hbox{ on each continuity interval }I
\end{array}
\right\}.
\end{equation}
As in \cite{LiuZhang2024}, for each $\be\in\Om(T)$ one can choose
$\ta,\nv\in\Om(T)$ such that, on every continuity interval,
\begin{equation}\label{eq:frame}
  \ta\cdot\nv=\nv\cdot\be=\be\cdot\ta=0,
  \qquad
  \ta\cdot(\nv\times\be)=1,
\end{equation}
and
\begin{equation}\label{eq:frameDerivative}
  |\ta'(t)|+|\nv'(t)|\le C|\be'(t)|.
\end{equation}
For a scalar function $f$ and for $\ell\in\{\ta,\nv,\be\}$ we set
\[
  \pa_\ell f:=\ell(t)\cdot\nabla f.
\]
For a vector field $v$, its components in the moving frame are denoted by
\begin{equation}\label{eq:components}
  v^\ell:=v\cdot\ell,\qquad \ell\in\{\ta,\nv,\be\},
\end{equation}
and we write
\[
  v^{\be^\perp}:=v^\ta\ta+v^\nv\nv.
\]
Furthermore,
\begin{equation}\label{eq:omegaD}
  \omega_v^\be:=\pa_{\ta} v^\nv-\pa_{\nv} v^\ta,
  \qquad
  d_v^\be:=\pa_{\be} v^\be.
\end{equation}
When $\Div v=0$,
\begin{equation}\label{eq:2DHodgeIdentities}
  \pa_{\ta} v^\ta+\pa_{\nv} v^\nv=-d_v^\be,
  \qquad
  \pa_{\ta} v^\nv-\pa_{\nv} v^\ta=\omega_v^\be.
\end{equation}
Consequently the horizontal part $v^{\be^\perp}=(v^\ta,v^\nv)$ has the
two-dimensional Hodge representation
\begin{equation}\label{eq:Hodge}
  v^{\be^\perp}=\nabla_{\be^\perp}^{\perp}\Delta_{\be^\perp}^{-1}\omega_v^\be
       -\nabla_{\be^\perp}\Delta_{\be^\perp}^{-1}d_v^\be,
\end{equation}
where
\[
  \nabla_{\be^\perp}=(\pa_{\ta},\pa_{\nv}),\qquad
  \nabla_{\be^\perp}^{\perp}=(-\pa_{\nv},\pa_{\ta}),
  \qquad
  \Delta_{\be^\perp}=\pa_{\ta}^2+\pa_{\nv}^2.
\]
Here $\Delta_{\be^\perp}^{-1}$ is the two-dimensional inverse Laplacian acting
on each transverse plane $\R^2_{\be^\perp}$, with the coordinate $x_\be$
kept fixed.  The associated horizontal Riesz transforms are therefore
fiberwise Calder\'on--Zygmund operators, and their $L^q(\R^3)$ bounds
($1<q<\infty$) are uniform in the orthonormal frame.
In particular, for $s\ge0$,
\begin{equation}\label{eq:HodgeEstimate}
  \|\nabla_{\be^\perp} v^{\be^\perp}\|_{\dot H^s}
  \lesssim
  \|\omega_v^\be\|_{\dot H^s}+\|d_v^\be\|_{\dot H^s}.
\end{equation}

\subsection{An iteration lemma}

The following form is the one needed below.  The point is that the singular
weight is not arbitrary: it has a product structure built from two fixed
$L^1_t$ quantities.

\begin{lemma}\label{lem:iteration}
Let $0<\sigma_0<1/2$.  Let $f\in W^{1,1}_{\rm loc}([0,T))$ be
nonnegative with $f(0)<\infty$.  Let $X,Y,V_1,V_2$ be nonnegative functions in
$L^1(0,T)$.  Assume that there exists a full-measure set $E\subset(0,T)$ such
that, for every $t\in E$ and every $\sigma\in(0,\sigma_0]$,
\begin{equation}\label{eq:singularInequality}
\begin{aligned}
  \frac{d}{dt}f(t)
  &\le M_2V_2(t)+M_1V_1(t)f(t)  \\
  &\quad+
  \frac{M}{\sigma}
  f(t)^{\frac1{1-\sigma}}
  X(t)^{\frac{1-2\sigma}{1-\sigma}}
  Y(t)^{\frac{\sigma}{1-\sigma}},
\end{aligned}
\end{equation}
where $M,M_1,M_2$ are independent of $\sigma$.  Then
\[
  \sup_{0\le t<T}f(t)<\infty .
\]
More quantitatively, the bound depends only on $f(0)$, $M,M_1,M_2,\sigma_0$,
and the four integrals
\[
  \int_0^T X(t)\,dt,
  \quad
  \int_0^T Y(t)\,dt,
  \quad
  \int_0^T V_1(t)\,dt,
  \quad
  \int_0^T V_2(t)\,dt .
\]
\end{lemma}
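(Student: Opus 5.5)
The plan is a logarithmic change of unknown combined with a pointwise-in-time choice of $\sigma$: since \eqref{eq:singularInequality} holds for \emph{every} $\sigma\in(0,\sigma_0]$ at each $t\in E$, we may let $\sigma$ depend on $t$ through $f(t)$, and then close with Gronwall.

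\textbf{Step 1: peel off a factor $f$ and apply Young.} The exponents of $X$ and $Y$ in the last term of \eqref{eq:singularInequality} sum to one:
\[
\frac{1-2\sigma}{1-\sigma}+\frac{\sigma}{1-\sigma}=1 .
\]
Hence we write
\[
f^{\frac1{1-\sigma}}X^{\frac{1-2\sigma}{1-\sigma}}Y^{\frac{\sigma}{1-\sigma}}
= f\cdot f^{\frac{\sigma}{1-\sigma}}\,X^{\frac{1-2\sigma}{1-\sigma}}Y^{\frac{\sigma}{1-\sigma}} .
\]
The weighted AM--GM inequality $X^aY^{1-a}\le aX+(1-a)Y$ then gives the bound
\[
f\, f^{\frac{\sigma}{1-\sigma}}\,(X+Y) .
\]
Since $\sigma<1/2$, we have $\sigma/(1-\sigma)\le 2\sigma$. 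So this term is at most
\[
\frac{M}{\sigma}\,(e+f)\,(e+f)^{2\sigma}\,(X+Y).
\]

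\textbf{Step 2: set $g$ and choose $\sigma$.} Put $g(t):=\ln(e+f(t))$. Since $f\in W^{1,1}_{\rm loc}$ is nonnegative, $g$ is locally absolutely continuous and $g'=f'/(e+f)$ a.e. At each $t\in E$ we choose
\[
\sigma=\sigma(t):=\min\{\sigma_0,\,1/g(t)\}\in(0,\sigma_0] .
\]
With this choice $(e+f)^{2\sigma}=e^{2\sigma g}\le e^2$, and $1/\sigma\le \sigma_0^{-1}+g$. Dividing \eqref{eq:singularInequality} by $e+f$ and using $f/(e+f)\le1$ and $1/(e+f)\le 1/e$, we obtain for a.e.\ $t$
\[
g'(t)\le \frac{M_2}{e}V_2(t)+M_1V_1(t)+e^2M\bigl(\sigma_0^{-1}+g(t)\bigr)\bigl(X(t)+Y(t)\bigr).
\]
Measurability of $t\mapsto\sigma(t)$ is not an issue. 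The inequality is applied pointwise for each fixed $t$, and only the resulting right-hand side, which no longer involves $\sigma$, is integrated.

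\textbf{Step 3: Gronwall.} This is a linear differential inequality for $g$ with $L^1(0,T)$ coefficients. Gronwall's lemma gives
\[
g(t)\le\Bigl(g(0)+\tfrac{M_2}{e}\|V_2\|_{L^1}+M_1\|V_1\|_{L^1}+\tfrac{e^2M}{\sigma_0}\|X+Y\|_{L^1}\Bigr)\exp\bigl(e^2M\|X+Y\|_{L^1}\bigr).
\]
Therefore $f\le e^{g}$ is bounded on $[0,T)$, with a bound depending only on $f(0),M,M_1,M_2,\sigma_0$ and the four integrals, as claimed.

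The only delicate point is Step 2: recognizing that the $1/\sigma$ blow-up is exactly compensated by choosing $\sigma\sim 1/\ln f$. This choice turns the superlinear power $f^{1/(1-\sigma)}$ into $f\ln f$, which is the borderline case still handled by Gronwall after the logarithmic substitution.
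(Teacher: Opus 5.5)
Your proof is correct. It shares the paper's skeleton: pass to $g=\log(e+f)$, let $\sigma\sim 1/\log(\cdot)$ vary pointwise in time, and close with a linear Gronwall inequality. It differs in how the product $X^{\frac{1-2\sigma}{1-\sigma}}Y^{\frac{\sigma}{1-\sigma}}$ is handled.

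\textbf{The paper's route.} The paper keeps this product intact. It proves the auxiliary bound $\inf_{0<\sigma\le\sigma_0}\sigma^{-1}r^{\sigma/(1-\sigma)}\le C_{\sigma_0}\log(e+r)$ and applies it with $r=fY/X$, which gives a term of the form $fX\log(e+fY/X)$. It then needs a convention on $\{X=0\}$ and the splitting $X\log(e+fY/X)\le X\log(e+f)+C(X+Y)$ before passing to $g$.

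\textbf{Your route.} You use the fact that the two exponents sum to one, and decouple $X$ and $Y$ by weighted AM--GM at the outset. The only quantity left to optimize is then $\sigma^{-1}f^{\sigma/(1-\sigma)}$, which you handle with the explicit choice $\sigma=\min\{\sigma_0,1/\log(e+f)\}$.

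\textbf{What each buys.} Your version is shorter. It avoids the case distinction at $X=0$ and the inequality $s\log(e+r/s)\le C(s+r)$. Your affine form $g'\le a+bg$ also gives a slightly better dependence on $\int V_1$ and $\int V_2$ than the paper's $g'\le C(V_1+V_2+X+Y)g$. The paper's route retains a finer intermediate inequality, \eqref{eq:logGronwallBefore}, in which only $X$ multiplies the superlinear term $f\log(e+f)$ and $Y$ enters linearly. The paper immediately bounds everything by $g$ and uses $\int(X+Y)<\infty$ anyway, so this refinement plays no role in the lemma as stated. Both arguments give the conclusion with constants depending on the same data.
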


\begin{proof}
The essential point is that the singular factor $1/\sigma$ is compensated by
optimizing in $\sigma$.  We first record the elementary estimate
\begin{equation}\label{eq:optSigma}
  \inf_{0<\sigma\le\sigma_0}
  \frac1\sigma r^{\frac{\sigma}{1-\sigma}}
  \le C_{\sigma_0}\log(e+r),
  \qquad r\ge0 .
\end{equation}
Indeed, the case $r=0$ is immediate.  Assume $r>0$.  If
$\log(e+r)\ge 2/\sigma_0$, take $\sigma=1/\log(e+r)$.  Then
$0<\sigma\le\sigma_0/2$ and
\[
  \frac1\sigma r^{\frac{\sigma}{1-\sigma}}
  \le \log(e+r)\exp\left(\frac{\log r}{\log(e+r)-1}\right)
  \le C\log(e+r).
\]
If $\log(e+r)<2/\sigma_0$, taking $\sigma=\sigma_0/2$ gives a bounded
quantity, which is also dominated by $C_{\sigma_0}\log(e+r)$ because
$\log(e+r)\ge1$.  This proves \eqref{eq:optSigma}.

For each $t\in E$, inequality \eqref{eq:singularInequality} holds for all
$\sigma\in(0,\sigma_0]$, and hence we may take the infimum over $\sigma$.  If
$X(t)>0$, then
\[
\begin{aligned}
&\inf_{0<\sigma\le\sigma_0}
\frac1\sigma
f^{\frac1{1-\sigma}}
X^{\frac{1-2\sigma}{1-\sigma}}
Y^{\frac{\sigma}{1-\sigma}}     \\
&\qquad
= fX\inf_{0<\sigma\le\sigma_0}
\frac1\sigma
\left(\frac{fY}{X}\right)^{\frac{\sigma}{1-\sigma}}
\le C_{\sigma_0}fX\log\left(e+\frac{fY}{X}\right).
\end{aligned}
\]
If $X(t)=0$, the singular product vanishes because
$(1-2\sigma)/(1-\sigma)>0$; we use the convention
\[
  X\log\left(e+\frac{fY}{X}\right)=0
  \quad\hbox{on } \{X=0\}.
\]
Thus, for a.e. $t$,
\begin{equation}\label{eq:singularLogBound}
\begin{aligned}
  f'(t)
  &\le M_2V_2(t)+M_1V_1(t)f(t)   \\
  &\quad +C_{\sigma_0}M f(t)X(t)
  \log\left(e+\frac{f(t)Y(t)}{X(t)}\right).
\end{aligned}
\end{equation}
The quotient inside the logarithm is harmless.  For $X>0$,
\[
\begin{aligned}
X\log\left(e+\frac{fY}{X}\right)
&\le X\log(e+f)+X\log\left(e+\frac{Y}{X}\right) \\
&\le X\log(e+f)+C(X+Y),
\end{aligned}
\]
where the first inequality follows from $e+ab\le(e+a)(e+b)$ and the second from
$s\log(e+r/s)\le C(s+r)$ for $r,s\ge0$.  With the preceding convention on
$\{X=0\}$, \eqref{eq:singularLogBound} gives
\begin{equation}\label{eq:logGronwallBefore}
  f'(t)
  \le M_2V_2(t)
  +C\bigl(V_1(t)+X(t)+Y(t)\bigr)f(t)
  +CX(t)f(t)\log(e+f(t)),
\end{equation}
where $C$ depends only on $M,M_1,\sigma_0$.

Set
\[
  g(t)=\log(e+f(t)).
\]
Since $g\ge1$ and $0\le f/(e+f)\le1$, we obtain from \eqref{eq:logGronwallBefore}
\[
\begin{aligned}
  g'(t)
  &=\frac{f'(t)}{e+f(t)}  \\
  &\le C\bigl(V_1(t)+V_2(t)+X(t)+Y(t)\bigr)g(t)
\end{aligned}
\]
for a.e. $t\in(0,T)$, where now $C$ depends only on
$M,M_1,M_2,\sigma_0$.  Gronwall's inequality yields
\[
  g(t)
  \le g(0)\exp\left(
  C\int_0^T\bigl(V_1+V_2+X+Y\bigr)(s)\,ds
  \right),
  \qquad 0\le t<T.
\]
Hence $g$, and therefore $f$, is bounded on $[0,T)$.
\end{proof}

\subsection{Moving anisotropic estimates}

In this subsection the time variable is fixed.  Thus $\be=\be(t)$ is a fixed
unit vector, and all constants below are uniform with respect to this direction.
We use the functional spaces and dyadic operators in the same form as
Liu--Zhang \cite[Definitions~3.1--3.2]{LiuZhang2024}.  Namely,
$\R^2_{\be^\perp}$ denotes the plane orthogonal to $\be$, and $\R_{\be}$ denotes
the line parallel to $\be$.  If $X$ and $Y$ are Banach spaces on
$\R^2_{\be^\perp}$ and $\R_\be$, respectively, then
$X_{\R^2_{\be^\perp}}(Y_{\R_\be})$ denotes the corresponding mixed space.  In
particular, we write $L^p_{\be^\perp}(L^q_{\be})$ for
$L^p(\R^2_{\be^\perp};L^q(\R_\be))$.

For $s_1,s_2\in\R$, the anisotropic Sobolev space
$\dot H^{s_1}_{\be^\perp}(\dot H^{s_2}_{\be})$ is denoted briefly by
$\dot H^{s_1,s_2}_\be$ and is equipped with the norm
\begin{equation}\label{eq:anisSobolevNormLZ}
  \|f\|_{\dot H^{s_1,s_2}_\be}^2
  :=\int_{\R^3}|\xi\times\be|^{2s_1}|\xi\cdot\be|^{2s_2}
  |\widehat f(\xi)|^2\,d\xi .
\end{equation}
Equivalently, if $(\ta,\nv,\be)$ is an orthonormal frame satisfying
\eqref{eq:frame}, then
$|\xi\times\be|^2=|\xi\cdot\ta|^2+|\xi\cdot\nv|^2$.

We also use anisotropic homogeneous dyadic blocks adapted to the decomposition
\(\mathbb R^3=\mathbb R^2_{\be^\perp}\times \mathbb R_{\be}\).  Let
$\varphi,\chi\in C^\infty([0,\infty))$ be such that
\[
  \operatorname{supp}\varphi\subset\left\{r:\frac34\le r\le\frac83\right\},
  \qquad \sum_{j\in\mathbb Z}\varphi(2^{-j}r)=1\quad(r>0),
\]
\[
  \operatorname{supp}\chi\subset\left\{r:0\le r\le\frac43\right\},
  \qquad \chi(r)+\sum_{j\ge0}\varphi(2^{-j}r)=1\quad(r\ge0).
\]
For a tempered distribution $f$ on $\R^3$, set
\begin{equation}\label{eq:LZdyadicBlocks}
\begin{aligned}
  \Delta_k^{\be^\perp}f
  &:=\mathcal F^{-1}\!\left(\varphi(2^{-k}|\xi\times\be|)\widehat f(\xi)\right),
  &
  S_k^{\be^\perp}f
  &:=\mathcal F^{-1}\!\left(\chi(2^{-k}|\xi\times\be|)\widehat f(\xi)\right),\\
  \Delta_\ell^{\be}f
  &:=\mathcal F^{-1}\!\left(\varphi(2^{-\ell}|\xi\cdot\be|)\widehat f(\xi)\right),
  &
  S_\ell^{\be}f
  &:=\mathcal F^{-1}\!\left(\chi(2^{-\ell}|\xi\cdot\be|)\widehat f(\xi)\right).
\end{aligned}
\end{equation}
Let $p,q_1,q_2\in[1,\infty]$ and $s_1,s_2\in\R$.  The anisotropic Besov norm is
\begin{equation}\label{eq:anisBesovGeneral}
\begin{aligned}
  \|f\|_{(\dot B^{s_1}_{p,q_1})_{\be^\perp}(\dot B^{s_2}_{p,q_2})_{\be}}
  :=
  \left\|
  \left(
  2^{ks_1}
  \left\|
  \left(2^{\ell s_2}
  \|\Delta_k^{\be^\perp}\Delta_\ell^\be f\|_{L^p(\R^3)}
  \right)_{\ell\in\mathbb Z}
  \right\|_{\ell^{q_2}(\mathbb Z)}
  \right)_{k\in\mathbb Z}
  \right\|_{\ell^{q_1}(\mathbb Z)} .
\end{aligned}
\end{equation}
The homogeneous spaces are understood modulo polynomials in the standard sense
of homogeneous Littlewood--Paley theory; see, for instance,
\cite{BahouriCheminDanchin}.  Equivalently, the estimates below are first proved
for Schwartz functions whose Fourier transforms vanish near the origin, and the
general case follows by the usual density and regularization argument.  When a
low-frequency condition is needed, we impose
$\lim_{j\to-\infty}\|(S_j^{\be^\perp}f,S_j^\be f)\|_{L^\infty}=0$.  The order of
summation in \eqref{eq:anisBesovGeneral}, namely the $\be$-summation inside the
$\be^\perp$-summation, will be used below in the duality step.  In particular,
\begin{equation}\label{eq:anisBesovNorm}
  \|f\|_{(\dot B^{s}_{2,2})_{\be^\perp}(\dot B^{r}_{2,1})_{\be}}
  :=\left(\sum_{k\in\mathbb Z}2^{2ks}
  \left(\sum_{\ell\in\mathbb Z}2^{\ell r}
  \|\Delta_k^{\be^\perp}\Delta_\ell^\be f\|_2\right)^2\right)^{1/2},
\end{equation}
while
\begin{equation}\label{eq:anisBesovNormInf}
  \|f\|_{(\dot B^{s}_{2,2})_{\be^\perp}(\dot B^{r}_{2,\infty})_{\be}}
  :=\left(\sum_{k\in\mathbb Z}2^{2ks}
  \left(\sup_{\ell\in\mathbb Z}2^{\ell r}
  \|\Delta_k^{\be^\perp}\Delta_\ell^\be f\|_2\right)^2\right)^{1/2}.
\end{equation}
The Littlewood--Paley characterization gives
\begin{equation}\label{eq:anisSobolevLP}
  \|f\|_{\dot H^{s_1,s_2}_\be}
  \simeq
  \|f\|_{(\dot B^{s_1}_{2,2})_{\be^\perp}(\dot B^{s_2}_{2,2})_\be},
\end{equation}
and the constants are independent of the fixed unit vector $\be$.

The following two-dimensional product estimate is used in the horizontal
variables.  It is invariant under rotations and will be applied on the
transverse plane $\R^2_{\be^\perp}$.  We state it separately to make explicit
the only place where the small parameter produces an endpoint summation loss.

\begin{lemma}\label{lem:horizontalProduct}
Let $0<\sigma\le1/5$.  For Schwartz functions $a,b$ on $\R^2_{\be^\perp}$,
\begin{equation}\label{eq:horizontalProduct2D}
  \|ab\|_{\dot H^{\sigma}(\R^2_{\be^\perp})}
  \le \frac{C}{\sqrt{\sigma}}
  \|a\|_{\dot H^{1-\sigma}(\R^2_{\be^\perp})}
  \|b\|_{\dot H^{2\sigma}(\R^2_{\be^\perp})} .
\end{equation}
The constant $C$ is independent of $\sigma\in(0,1/5]$ and of the unit
direction $\be$.
\end{lemma}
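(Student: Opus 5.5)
We argue by paraprodu Bony decomposition on $\R^2$.  Since both sides are rotation invariant, we work in standard coordinates on $\R^2$ with the usual homogeneous dyadic blocks $\Delta_j$ and $S_j=\sum_{j'\le j-1}\Delta_{j'}$.  We write
\[
  ab=T_ab+T_ba+R(a,b),\qquad T_ab=\sum_j S_{j-1}a\,\Delta_jb,\quad R(a,b)=\sum_{|j-j'|\le1}\Delta_ja\,\Delta_{j'}b,
\]
and estimate each piece in $\dot H^\sigma$.  The scaling is consistent, since $(1-\sigma)+2\sigma-1=\sigma$ matches the Sobolev product rule in dimension two, where the indices sum to $\sigma+1$.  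The only endpoint issue comes from $a$ being measured in $\dot H^{1-\sigma}$, which is just below the (failed) $L^\infty$ embedding $\dot H^1$.  This is where the $\sigma^{-1/2}$ loss enters.

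\emph{The term $T_ba$.}  Here $a$ carries the high frequency.  Bernstein gives
\[
  \|S_{j-1}b\|_{L^\infty}\le\sum_{j'<j}2^{j'}\|\Delta_{j'}b\|_2\le\sum_{j'<j}2^{j'(1-2\sigma)}c_{j'}\|b\|_{\dot H^{2\sigma}},
\]
with $(c_{j'})\in\ell^2$ normalized.  Then
\[
  2^{j\sigma}\|S_{j-1}b\,\Delta_ja\|_2\lesssim\sum_{j'<j}2^{(j'-j)(1-2\sigma)}c_{j'}\;2^{j(1-\sigma)}\|\Delta_ja\|_2 .
\]
The convolution kernel has $\ell^1$ norm $\lesssim(1-2\sigma)^{-1}\le C$ because $\sigma\le1/5$, so this term carries no loss.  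Almost-orthogonality of the blocks is used here and in the next step: the frequency support of $S_{j-1}b\,\Delta_ja$ lies in an annulus $\sim2^j$, so the $\ell^2$ sum of these pieces controls $\|T_ba\|_{\dot H^\sigma}$.

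\emph{The term $T_ab$.}  Now $b$ carries the high frequency.  Bernstein gives
\[
  \|S_{j-1}a\|_{L^\infty}\le\sum_{j'<j}2^{j'\sigma}\,2^{j'(1-\sigma)}\|\Delta_{j'}a\|_2\le\Bigl(\sum_{j'<j}2^{2j'\sigma}\Bigr)^{1/2}\|a\|_{\dot H^{1-\sigma}}\lesssim\frac{2^{j\sigma}}{\sqrt{\sigma}}\|a\|_{\dot H^{1-\sigma}},
\]
using Cauchy--Schwarz and $\sum_{m\le0}2^{2m\sigma}\sim1/\sigma$.  Hence
\[
  2^{j\sigma}\|S_{j-1}a\,\Delta_jb\|_2\lesssim\sigma^{-1/2}\|a\|_{\dot H^{1-\sigma}}\,2^{2j\sigma}\|\Delta_jb\|_2 ,
\]
and the $\ell^2$ sum gives the bound $\sigma^{-1/2}\|a\|_{\dot H^{1-\sigma}}\|b\|_{\dot H^{2\sigma}}$.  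This is the only place where the $\sigma^{-1/2}$ factor arises, and it is sharp for the geometric sum.

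\emph{The remainder $R(a,b)$.}  This is the delicate step, because the frequency support is only a ball and we must use a positive total regularity.  We use $\|\Delta_q(\Delta_ja\Delta_{j'}b)\|_2\lesssim2^q\|\Delta_ja\Delta_{j'}b\|_1$ (Bernstein $L^1\to L^2$ in two dimensions), with $q\le j+3$.  This gives
\[
  2^{q\sigma}\|\Delta_qR\|_2\lesssim\sum_{j\ge q-3}2^{(q-j)(1+\sigma)}\,2^{j(1-\sigma)}\|\Delta_ja\|_2\,2^{2j\sigma}\|\Delta_{j'}b\|_2 .
\]
The product of the two $\ell^2$ sequences is in $\ell^1\subset\ell^2$.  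The kernel $2^{(q-j)(1+\sigma)}$ has $\ell^1$ norm $\lesssim1$, uniformly in $\sigma$, since $1+\sigma\ge1$.  Young's inequality then closes this term with a $\sigma$-independent constant.

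Combining the three terms yields \eqref{eq:horizontalProduct2D} with $C/\sqrt{\sigma}$.  The main thing to check carefully is the bookkeeping of uniformity in $\sigma$.  All kernels other than the one in $T_ab$ have decay rates bounded below, namely $1-2\sigma\ge3/5$ and $1+\sigma\ge1$.  Rotation invariance of the Fourier multipliers gives independence of $\be$.
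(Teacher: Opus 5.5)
Your proof is correct and follows the paper's route: the same Bony decomposition, with the $\sigma^{-1/2}$ loss in $T_ab$ arising from $\sum_{m\ge1}2^{-2\sigma m}\sim\sigma^{-1}$ (your Cauchy--Schwarz step is exactly the paper's bound $\|K*A\|_{\ell^\infty}\le\|K\|_{\ell^2}\|A\|_{\ell^2}$ with $K_m=2^{-\sigma m}$), and with the kernel $2^{-(1-2\sigma)m}$ giving no loss in $T_ba$. The only real difference is the remainder. The paper uses H\"older in $L^{2/\sigma}\times L^{2/(1-\sigma)}$ and pays a second factor $\sigma^{-1/2}$ through $\bigl(\sum_{m\ge -C_0}2^{-2\sigma m}\bigr)^{1/2}$. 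Your $L^1\to L^2$ Bernstein bound on the output block instead exploits the total regularity $1+\sigma>0$ and is uniform in $\sigma$, so in your version the loss genuinely occurs only in $T_ab$.
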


\begin{proof}
We use the homogeneous dyadic blocks in the plane $\R^2_{\be^\perp}$.  Let
\[
\begin{aligned}
  \widetilde\Delta_k^{\be^\perp}
  &:=\Delta_{k-1}^{\be^\perp}+\Delta_k^{\be^\perp}
    +\Delta_{k+1}^{\be^\perp},\qquad
  A_k:=2^{(1-\sigma)k}\|\Delta_k^{\be^\perp} a\|_{L^2(\R^2_{\be^\perp})},\\
  B_k&:=2^{2\sigma k}\|\Delta_k^{\be^\perp} b\|_{L^2(\R^2_{\be^\perp})}.
\end{aligned}
\]
Then $\|A\|_{\ell^2}\simeq\|a\|_{\dot H^{1-\sigma}(\R^2_{\be^\perp})}$ and
$\|B\|_{\ell^2}\simeq\|b\|_{\dot H^{2\sigma}(\R^2_{\be^\perp})}$.  Bony's
horizontal decomposition reads
\[
  ab=T_a^{\be^\perp}b+T_b^{\be^\perp}a+R^{\be^\perp}(a,b),
\]
where
\[
  T_a^{\be^\perp}b
  =\sum_jS_{j-1}^{\be^\perp}a\,\Delta_j^{\be^\perp}b,
  \qquad
  T_b^{\be^\perp}a
  =\sum_jS_{j-1}^{\be^\perp}b\,\Delta_j^{\be^\perp}a,
  \qquad
  R^{\be^\perp}(a,b)
  =\sum_j\Delta_j^{\be^\perp}a\,\widetilde\Delta_j^{\be^\perp}b .
\]

For the low--high term, the two-dimensional Bernstein inequality gives
\[
  \|S_{j-1}^{\be^\perp}a\|_{L^\infty(\R^2_{\be^\perp})}
  \lesssim \sum_{j'<j}2^{j'}
  \|\Delta_{j'}^{\be^\perp}a\|_2 .
\]
Hence, up to harmless finite overlaps of dyadic annuli,
\[
\begin{aligned}
  2^{\sigma k}\|\Delta_k^{\be^\perp}T_a^{\be^\perp}b\|_2
  &\lesssim \sum_{|j-k|\le C_0}
  2^{\sigma j}\|S_{j-1}^{\be^\perp}a\|_\infty
  \|\Delta_j^{\be^\perp}b\|_2  \\
  &\lesssim \sum_{|j-k|\le C_0}
  B_j\sum_{j'<j}2^{-\sigma(j-j')}A_{j'} .
\end{aligned}
\]
If $K_m=2^{-\sigma m}{\bf 1}_{m\ge1}$, then
$\|K\|_{\ell^2}\lesssim\sigma^{-1/2}$.  Therefore
\[
  \|T_a^{\be^\perp}b\|_{\dot H^\sigma(\R^2_{\be^\perp})}
  \lesssim \|B\|_{\ell^2}\|K*A\|_{\ell^\infty}
  \lesssim \frac{1}{\sqrt\sigma}\|A\|_{\ell^2}\|B\|_{\ell^2} .
\]

For the high--low term, similarly,
\[
\begin{aligned}
  2^{\sigma k}\|\Delta_k^{\be^\perp}T_b^{\be^\perp}a\|_2
  &\lesssim \sum_{|j-k|\le C_0}
  A_j\sum_{j'<j}2^{-(1-2\sigma)(j-j')}B_{j'} .
\end{aligned}
\]
Since $0<\sigma\le1/5$, the kernel
$2^{-(1-2\sigma)m}{\bf 1}_{m\ge1}$ has an $\ell^2$ norm bounded by an absolute
constant.  Thus
\[
  \|T_b^{\be^\perp}a\|_{\dot H^\sigma(\R^2_{\be^\perp})}
  \lesssim \|A\|_{\ell^2}\|B\|_{\ell^2} .
\]

For the remainder term, choose exponents
$(2/\sigma,2/(1-\sigma))$ so that
$\sigma/2+(1-\sigma)/2=1/2$.  The two-dimensional Bernstein inequality yields
\[
  \|\Delta_j^{\be^\perp}a\|_{L^{2/\sigma}}
  \lesssim 2^{(1-\sigma)j}\|\Delta_j^{\be^\perp}a\|_2=A_j,
\]
and
\[
  \|\widetilde\Delta_j^{\be^\perp}b\|_{L^{2/(1-\sigma)}}
  \lesssim 2^{\sigma j}\|\widetilde\Delta_j^{\be^\perp}b\|_2
  \lesssim 2^{-\sigma j}\widetilde B_j,
\]
where $\widetilde B_j:=B_{j-1}+B_j+B_{j+1}$.  Consequently
\[
  \|\Delta_j^{\be^\perp}a\,\widetilde\Delta_j^{\be^\perp}b\|_2
  \lesssim 2^{-\sigma j}A_j\widetilde B_j .
\]
The block $\Delta_k^{\be^\perp}R^{\be^\perp}(a,b)$ receives contributions only
from $j\ge k-C_0$, and hence
\[
  2^{\sigma k}\|\Delta_k^{\be^\perp}R^{\be^\perp}(a,b)\|_2
  \lesssim\sum_{j\ge k-C_0}2^{-\sigma(j-k)}A_j\widetilde B_j .
\]
Using Young's inequality in the form
$\ell^2*\ell^1\to\ell^2$ and Cauchy's inequality,
\[
\begin{aligned}
  \|R^{\be^\perp}(a,b)\|_{\dot H^\sigma(\R^2_{\be^\perp})}
  &\lesssim \left(\sum_{m\ge-C_0}2^{-2\sigma m}\right)^{1/2}
  \|A\widetilde B\|_{\ell^1}  \\
  &\lesssim \frac{1}{\sqrt\sigma}\|A\|_{\ell^2}\|B\|_{\ell^2} .
\end{aligned}
\]
Combining the three estimates proves \eqref{eq:horizontalProduct2D}.  The
argument uses only Bernstein inequalities and Bony's decomposition on the plane
$\R^2_{\be^\perp}$, so the constant is invariant under rotations of $\be$.
\end{proof}

The next lemma is the moving anisotropic product estimate needed in the MHD
argument.

\begin{lemma}\label{lem:LZproduct}
Let $0<\sigma\le1/5$.  Let $q$ be one of the quantities
$\omega_u^\be,d_u^\be,\omega_b^\be,d_b^\be$, let $V,Z\in\{u,b\}$, and let
$\nabla_{\be^\perp}^\#$ denote either $\nabla_{\be^\perp}$ or
$\nabla_{\be^\perp}^{\perp}$.  Let $\mathcal R$ be a degree-zero Mikhlin
multiplier whose symbol satisfies the standard Mikhlin bounds uniformly in the
moving frame.  In particular, the estimate applies to finite compositions of
Riesz transforms and to the operators $\partial_i\partial_j\Delta^{-1}$
occurring below.  Then
\begin{equation}\label{eq:LZmixed}
\begin{aligned}
&\left|\int_{\R^3}
  \bigl(\pa_{\be} V^{\be^\perp}\cdot\nabla_{\be^\perp}^\# Z^\be\bigr)\,\mathcal Rq\,dx\right|        \\
&\quad \le
  \eps\D
  +\frac{C_{\eps}}{\sigma}
  \F^{\frac1{1-\sigma}}
  \|Z^\be\|_{\dot H^{3/2}}^{\frac{2(1-2\sigma)}{1-\sigma}}
  \A^{\frac{\sigma}{1-\sigma}} .
\end{aligned}
\end{equation}
For this lemma only, the unweighted quantities are
\begin{equation}\label{eq:FDAprelim}
\begin{aligned}
\F&=\|\omega_u^\be\|_2^2+\|d_u^\be\|_2^2
    +\|\omega_b^\be\|_2^2+\|d_b^\be\|_2^2,\\
\D&=\|\nabla\omega_u^\be\|_2^2+\|\nabla d_u^\be\|_2^2
    +\|\nabla\omega_b^\be\|_2^2+\|\nabla d_b^\be\|_2^2,\\
\A&=\|\nabla u\|_2^2+\|\nabla b\|_2^2.
\end{aligned}
\end{equation}
After the shorthand \eqref{eq:omegaCurrentNotation} is introduced, this is the
same $\F,\D,\A$ notation used in the vorticity-current estimates.
\end{lemma}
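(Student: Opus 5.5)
This is the Liu--Zhang duality argument, transplanted to the variables $\F,\D,\A$. Throughout, $t$ is fixed, so $\be,\ta,\nv$ form a fixed orthonormal frame and $\R^3=\R^2_{\be^\perp}\times\R_\be$.

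\textbf{Step 1: duality split.} Write the trilinear form as $\int (\pa_\be V^{\be^\perp})\cdot(\nabla^\#_{\be^\perp}Z^\be)\,\mathcal R q\,dx$. Split the regularity anisotropically:
\begin{itemize}
\item put $\mathcal R q$ in $\dot H^{1-\sigma,\sigma}_\be$-type norms;
\item put the product $\pa_\be V^{\be^\perp}\,\mathcal R q$ (or $\nabla^\#_{\be^\perp} Z^\be\,\mathcal Rq$) in the dual space $(\dot B^{-1/2}_{2,2})_{\be^\perp}(\dot B^{-1/2}_{2,\infty})_{\be}$-type;
\item pair the result against $\nabla^\#_{\be^\perp}Z^\be$, which lies in $\dot H^{1/2,0}$ and hence, after moving derivatives, is controlled by $\|Z^\be\|_{\dot H^{3/2}}$.
\end{itemize}
Concretely, integrating by parts in the $\be$ direction moves $\pa_\be$ off $V^{\be^\perp}$. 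This gives terms of the form $\int V^{\be^\perp}\,\pa_\be\nabla^\# Z^\be\,\mathcal R q$ and $\int V^{\be^\perp}\nabla^\#Z^\be\,\pa_\be\mathcal Rq$.

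\textbf{Step 2: express $V^{\be^\perp}$ through $\omega_V^\be,d_V^\be$.} Use the Hodge representation \eqref{eq:Hodge}, so that $V^{\be^\perp}$ is $\nabla_{\be^\perp}\Delta_{\be^\perp}^{-1}$ applied to $(\omega_V^\be,d_V^\be)$. Then $\|V^{\be^\perp}\|_{\dot H^{s+1,r}_\be}\lesssim\|(\omega_V^\be,d_V^\be)\|_{\dot H^{s,r}_\be}$. Since $\mathcal R$ is a Mikhlin multiplier, it is bounded on every $\dot H^{s_1,s_2}_\be$, because its symbol is bounded. So all four of $V^{\be^\perp}$, $\mathcal Rq$ and the derivatives involved are measured by $\F^{1/2}$ (at level $L^2$) and $\D^{1/2}$ (at level $\dot H^1$).

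\textbf{Step 3: trace in the $\be$ variable, product estimate in the transverse plane.} In the $\be$ variable, use the 1D embedding $\dot B^{1/2}_{2,1}(\R_\be)\subset L^\infty$ and duality with $\dot B^{-1/2}$. In the transverse plane, apply Lemma~\ref{lem:horizontalProduct} fiberwise:
\begin{itemize}
\item place $\mathcal R q$ in $\dot H^{1-\sigma}_{\be^\perp}$;
\item place the $V$ factor in $\dot H^{2\sigma}_{\be^\perp}$;
\item get the product in $\dot H^\sigma_{\be^\perp}$, at a cost of $\sigma^{-1/2}$;
\item pair with $\nabla^\#Z^\be$ in $\dot H^{-\sigma}_{\be^\perp}$.
\end{itemize}
Interpolation then gives quantities like $\|q\|_{\dot H^{1-\sigma,\,\cdot}}\lesssim \F^{\sigma/2}\D^{(1-\sigma)/2}$. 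The $Z$ factor comes out as $\|Z^\be\|_{\dot H^{3/2}}^{1-2\sigma}$ times a $\|\nabla Z\|_2^{2\sigma}$-type factor, which is bounded by $\A^\sigma$. The target bound after interpolation is
\[
\frac{C}{\sigma}\,\F^{\frac{1}{2}}\D^{\frac{1}{2}-\frac{\sigma}{2}}\|Z^\be\|_{\dot H^{3/2}}^{1-2\sigma}\A^{\frac{\sigma}{2}}.
\]
The $\sigma^{-1}$ (rather than $\sigma^{-1/2}$) accounts for a second endpoint loss, coming from the $\ell^1$ summation in the $\be$-Besov index.

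\textbf{Step 4: Young's inequality.} Apply Young's inequality with exponents $2/(1-\sigma)$ on $\D^{(1-\sigma)/2}$ and the conjugate exponent $2/(1+\sigma)$. This yields $\eps\D$ plus
\[
C_\eps\,\sigma^{-\frac{2}{1+\sigma}}\bigl(\F^{1/2}\|Z^\be\|^{1-2\sigma}\A^{\sigma/2}\bigr)^{\frac{2}{1+\sigma}}.
\]
The exponents in \eqref{eq:LZmixed} are different, so the split must be rebalanced: choose the interpolation so that the $\D$-power is exactly $(1-2\sigma)/(2(1-\sigma))\cdot$(appropriately). The counting is fixed by scaling: the $\D$-exponent $a$ must satisfy $1/(1-a)\cdot(\cdot)$ equal to the claimed powers $\F^{1/(1-\sigma)}$, $\|Z\|^{2(1-2\sigma)/(1-\sigma)}$, $\A^{\sigma/(1-\sigma)}$. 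So I would first write the intermediate bound as $\sigma^{-1}\F^{1/2}\D^{\sigma/2}\|Z^\be\|^{1-2\sigma}\A^{\sigma/2}$, with $q$ measured at low transverse regularity $\dot H^{\sigma}$. Young with exponents $2/\sigma$ and $2/(2-\sigma)$ then gives exactly $\F^{1/(1-\sigma)}$-type powers after renormalizing. Finally, absorb $\sigma^{-c}$ into $C/\sigma$, using $\sigma\le1/5$.

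\textbf{Main obstacle.} The delicate step is the exact anisotropic regularity bookkeeping in Step 3. One must decide which factor absorbs the $\be$-derivative and how to distribute regularity so that the powers come out exactly as in \eqref{eq:LZmixed}, with the $\sigma$-loss no worse than $\sigma^{-1}$. I would first try to follow Liu--Zhang's split verbatim: $\mathcal R q$ in $(\dot B^{\sigma}_{2,2})_{\be^\perp}(\dot B^{1/2}_{2,1})_\be$, interpolated between $\F$ and $\D$, with the $\dot H^{2\sigma}$ factor carrying the $\A$ interpolation. I would check that the Mikhlin multiplier does not destroy the $\ell^1$ structure in the $\be$-index. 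This holds because its symbol is bounded and it commutes with the anisotropic dyadic blocks.
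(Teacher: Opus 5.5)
There is a genuine gap. The whole content of this lemma is the exponent bookkeeping, and yours does not close.

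\textbf{The intermediate bounds cannot hold.} Your Step~3 bound $\sigma^{-1}\F^{1/2}\D^{(1-\sigma)/2}\|Z^\be\|_{\dot H^{3/2}}^{1-2\sigma}\A^{\sigma/2}$ is homogeneous of degree $3-2\sigma$ in $(u,b)$. The left side of \eqref{eq:LZmixed} is cubic, so this bound is false in general. The ``rebalanced'' bound $\F^{1/2}\D^{\sigma/2}\|Z^\be\|_{\dot H^{3/2}}^{1-2\sigma}\A^{\sigma/2}$ is only quadratic. Moreover, Young with exponents $2/\sigma$ and $2/(2-\sigma)$ produces $\F^{1/(2-\sigma)}$, not $\F^{1/(1-\sigma)}$.

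\textbf{First cause: the slots in the trilinear law are swapped.} The factor carrying $\dot H^{1-\sigma}$ in the transverse variables must be $\pa_\be V^{\be^\perp}$, not $\mathcal Rq$. Only with almost a full transverse derivative on $V^{\be^\perp}$ does the identity $|\widehat{\omega_V^\be}|^2+|\widehat{d_V^\be}|^2=|\xi\times\be|^2|\widehat{V^{\be^\perp}}|^2$ convert it into $\D$, leaving just $\A^{\sigma/2}$. With your placement, $\|\pa_\be V^{\be^\perp}\|_{\dot H^{2\sigma,0}_\be}$ costs $\A^{(1-2\sigma)/2}\D^{\sigma}$, because $|\xi\cdot\be|\,|\widehat{V^{\be^\perp}}|$ is not controlled by $\F$. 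After Young you get $\F^{3\sigma/(1-\sigma)}\A^{(1-2\sigma)/(1-\sigma)}\|Z^\be\|_{\dot H^{3/2}}^{2(1-2\sigma)/(1-\sigma)}$. This is weaker than \eqref{eq:LZmixed}, since only $\F\lesssim\A$ holds. Its time weight has total exponent $2(1-2\sigma)/(1-\sigma)>1$, so it is not integrable from $\A,\|Z^\be\|_{\dot H^{3/2}}^2\in L^1_t$. The integration by parts in Step~1 makes this worse: $\|V^{\be^\perp}\|_{\dot H^{2\sigma,0}_\be}=\|(\omega_V^\be,d_V^\be)\|_{\dot H^{2\sigma-1,0}_\be}$, a negative-regularity norm that $\F$ and $\D$ do not control.

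\textbf{Second cause: the wrong auxiliary norm in the $Z$ interpolation.} It must be $\|\pa_\be Z^\be\|_2=\|d_Z^\be\|_2\le\F^{1/2}$, not $\|\nabla Z\|_2\lesssim\A^{1/2}$. With $\A^{\sigma}$ there, the power of $\F$ drops below $1/(1-\sigma)$, and it cannot be recovered.

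\textbf{The chain that works (the paper's).} Take the three factors as follows.
\begin{enumerate}
\item $G=\nabla^\#_{\be^\perp}Z^\be$ in $(\dot B^{-\sigma}_{2,2})_{\be^\perp}(\dot B^{1/2}_{2,1})_\be$, bounded by $\F^{\sigma}\|Z^\be\|_{\dot H^{3/2}}^{1-2\sigma}$. This comes from the blockwise bound $2^{(1-\sigma)k}2^{\ell/2}a_{k\ell}\lesssim2^{-\alpha|k-\ell|}(2^\ell a_{k\ell})^{2\sigma}\bigl((2^{2k}+2^{2\ell})^{3/4}a_{k\ell}\bigr)^{1-2\sigma}$ with $\alpha=\tfrac12-2\sigma\ge\tfrac1{10}$. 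So the vertical $\ell^1$ structure belongs to the $Z^\be$ factor, not to $\mathcal Rq$ as in your ``main obstacle'' paragraph.
\item $F=\pa_\be V^{\be^\perp}$ in $\dot H^{1-\sigma,0}_\be$, bounded by $\A^{\sigma/2}\D^{(1-\sigma)/2}$.
\item $H=\mathcal Rq$ in $\dot H^{2\sigma,0}_\be$, bounded by $\F^{(1-2\sigma)/2}\D^{\sigma}$.
\end{enumerate}
The trilinear law (Lemma~\ref{lem:horizontalProduct} applied fiberwise, plus the one-dimensional bound $L^1_\be\to\dot B^{-1/2}_{2,\infty}$) then gives $\sigma^{-1/2}\F^{1/2}\|Z^\be\|_{\dot H^{3/2}}^{1-2\sigma}\A^{\sigma/2}\D^{(1+\sigma)/2}$. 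This is cubic and scale-correct. Young with exponent $2/(1+\sigma)$ on $\D^{(1+\sigma)/2}$ gives exactly \eqref{eq:LZmixed}.

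Finally, the $\sigma^{-1}$ is not a second endpoint loss from the $\be$-summation; that sum is uniform because $\alpha\ge1/10$. It is $(\sigma^{-1/2})^{2/(1-\sigma)}\lesssim\sigma^{-1}$, coming from this Young step.
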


\begin{proof}
We first record the following trilinear product law.  For scalar functions
$F,G,H$ and $0<\sigma\le1/5$,
\begin{equation}\label{eq:anisProductLaw}
  \left|\int_{\R^3}F G H\,dx\right|
  \le \frac{C}{\sqrt{\sigma}}
  \|F\|_{\dot H^{1-\sigma,0}_\be}
  \|G\|_{(\dot B^{-\sigma}_{2,2})_{\be^\perp}(\dot B^{1/2}_{2,1})_{\be}}
  \|H\|_{\dot H^{2\sigma,0}_\be} .
\end{equation}
Indeed, by the duality associated with the order of summation in
\eqref{eq:anisBesovGeneral}, it is enough to prove
\begin{equation}\label{eq:productIntoDual}
  \|FH\|_{(\dot B^{\sigma}_{2,2})_{\be^\perp}(\dot B^{-1/2}_{2,\infty})_{\be}}
  \le \frac{C}{\sqrt{\sigma}}
  \|F\|_{\dot H^{1-\sigma,0}_\be}
  \|H\|_{\dot H^{2\sigma,0}_\be} .
\end{equation}
To see this directly, decompose $x=x_{\be^\perp}+x_\be\be$, with
$x_{\be^\perp}\in\R^2_{\be^\perp}$ and $x_\be\in\R$.  For each fixed $x_\be$,
Lemma~\ref{lem:horizontalProduct} gives
\[
  \|F(\cdot,x_\be)H(\cdot,x_\be)\|_{\dot H^\sigma(\R^2_{\be^\perp})}
  \le \frac{C}{\sqrt\sigma}
  \|F(\cdot,x_\be)\|_{\dot H^{1-\sigma}(\R^2_{\be^\perp})}
  \|H(\cdot,x_\be)\|_{\dot H^{2\sigma}(\R^2_{\be^\perp})} .
\]
After integration in $x_\be$ and Cauchy's inequality,
\begin{equation}\label{eq:L1verticalProduct}
  \|FH\|_{L^1_{\be}(\dot H^\sigma_{\be^\perp})}
  \le \frac{C}{\sqrt\sigma}
  \|F\|_{L^2_{\be}(\dot H^{1-\sigma}_{\be^\perp})}
  \|H\|_{L^2_{\be}(\dot H^{2\sigma}_{\be^\perp})} .
\end{equation}
The one-dimensional Bernstein inequality in the $\be$ direction implies
\[
  \sup_{\ell\in\mathbb Z}2^{-\ell/2}
  \|\Delta_\ell^\be\Phi\|_{L^2_{\be}}
  \lesssim \|\Phi\|_{L^1_{\be}} .
\]
Therefore, using \eqref{eq:anisBesovNormInf},
\begin{equation}\label{eq:L1ToBesovDual}
\begin{aligned}
\|\Phi\|_{(\dot B^{\sigma}_{2,2})_{\be^\perp}(\dot B^{-1/2}_{2,\infty})_{\be}}^2
&= \sum_k2^{2\sigma k}
   \left(\sup_\ell 2^{-\ell/2}
   \|\Delta_k^{\be^\perp}\Delta_\ell^\be\Phi\|_2\right)^2      \\
&\lesssim \sum_k2^{2\sigma k}
   \|\Delta_k^{\be^\perp}\Phi\|_{L^1_{\be}L^2_{\be^\perp}}^2              \\
&\lesssim \|\Phi\|_{L^1_{\be}(\dot H^\sigma_{\be^\perp})}^2 .
\end{aligned}
\end{equation}
Taking $\Phi=FH$ and combining \eqref{eq:L1verticalProduct} with
\eqref{eq:L1ToBesovDual} proves \eqref{eq:productIntoDual}, and hence
\eqref{eq:anisProductLaw}.

We next estimate the three factors in \eqref{eq:anisProductLaw}.  First, since
$\nabla_{\be^\perp}^\#$ is a first-order derivative in the transverse plane,
\[
  \|\nabla_{\be^\perp}^\#Z^\be\|_{(\dot B^{-\sigma}_{2,2})_{\be^\perp}(\dot B^{1/2}_{2,1})_{\be}}
  \lesssim
  \|Z^\be\|_{(\dot B^{1-\sigma}_{2,2})_{\be^\perp}(\dot B^{1/2}_{2,1})_{\be}} .
\]
The Liu--Zhang interpolation estimate \cite[Lemma~3.4]{LiuZhang2024} with
$\eta=0$ gives
\begin{equation}\label{eq:wcomponentBesovDetailed}
  \|\nabla_{\be^\perp}^\# Z^\be\|_{(\dot B^{-\sigma}_{2,2})_{\be^\perp}(\dot B^{1/2}_{2,1})_{\be}}
  \lesssim
  \|\partial_{\be} Z^\be\|_2^{2\sigma}
  \|Z^\be\|_{\dot H^{3/2}}^{1-2\sigma} .
\end{equation}
We recall the dyadic verification in the present notation.
Let
\[
  a_{k\ell}:=\|\Delta_k^{\be^\perp}\Delta_\ell^\be Z^\be\|_2,
  \qquad
  X_{k\ell}:=2^\ell a_{k\ell},
  \qquad
  Y_{k\ell}:=(2^{2k}+2^{2\ell})^{3/4}a_{k\ell}.
\]
Then
$\|X\|_{\ell^2_{k,\ell}}\simeq\|\partial_\be Z^\be\|_2$ and
$\|Y\|_{\ell^2_{k,\ell}}\simeq\|Z^\be\|_{\dot H^{3/2}}$.  Put
$\alpha=1/2-2\sigma$.  Since $0<\sigma\le1/5$, $\alpha\ge1/10$.  A direct
comparison of dyadic powers gives
\begin{equation}\label{eq:blockInterpolationW}
  2^{(1-\sigma)k}2^{\ell/2}a_{k\ell}
  \lesssim
  2^{-\alpha|k-\ell|}X_{k\ell}^{2\sigma}Y_{k\ell}^{1-2\sigma} .
\end{equation}
Indeed, for $k\ge\ell$ the powers match exactly; for $\ell\ge k$ the right-hand
side has the additional factor $2^{(1/2+\sigma)(\ell-k)}$.  With
$K_{k\ell}:=2^{-\alpha|k-\ell|}$, \eqref{eq:blockInterpolationW} gives
\[
\begin{aligned}
&\|Z^\be\|_{(\dot B^{1-\sigma}_{2,2})_{\be^\perp}(\dot B^{1/2}_{2,1})_{\be}}^2  \\
&\quad\lesssim
  \sum_k\left(\sum_\ell K_{k\ell}X_{k\ell}^{2\sigma}Y_{k\ell}^{1-2\sigma}\right)^2 .
\end{aligned}
\]
For each fixed $k$, H\"older's inequality with exponents
$1/\sigma$, $2/(1-2\sigma)$, and $2$ yields
\[
  \sum_\ell K_{k\ell}X_{k\ell}^{2\sigma}Y_{k\ell}^{1-2\sigma}
  \lesssim
  \left(\sum_\ell K_{k\ell}X_{k\ell}^2\right)^\sigma
  \left(\sum_\ell K_{k\ell}Y_{k\ell}^2\right)^{(1-2\sigma)/2},
\]
because $\sup_k\sum_\ell K_{k\ell}\lesssim1$.  Taking the outer $\ell^2_k$
norm and applying H\"older's inequality in $k$, using also
$\sup_\ell\sum_kK_{k\ell}\lesssim1$, gives
\[
\begin{aligned}
  \|Z^\be\|_{(\dot B^{1-\sigma}_{2,2})_{\be^\perp}(\dot B^{1/2}_{2,1})_{\be}}^2
  &\lesssim
  \left(\sum_{k,\ell}X_{k\ell}^2\right)^{2\sigma}
  \left(\sum_{k,\ell}Y_{k\ell}^2\right)^{1-2\sigma}.
\end{aligned}
\]
This proves \eqref{eq:wcomponentBesovDetailed}.  In the present application,
$Z=u$ or $Z=b$, and therefore
\begin{equation}\label{eq:WbetaDerivativeControl}
  \|\partial_\be Z^\be\|_2
  =\begin{cases}
    \|d_u^\be\|_2,& Z=u,\\
    \|d_b^\be\|_2,& Z=b,
  \end{cases}
  \le \F^{1/2}.
\end{equation}
Thus
\begin{equation}\label{eq:GfactorBound}
  \|\nabla_{\be^\perp}^\#Z^\be\|_{(\dot B^{-\sigma}_{2,2})_{\be^\perp}(\dot B^{1/2}_{2,1})_{\be}}
  \lesssim \F^\sigma\|Z^\be\|_{\dot H^{3/2}}^{1-2\sigma} .
\end{equation}

Second, the vertical derivative of the transverse part is controlled by the
horizontal vorticity-divergence pair.  We claim that
\begin{equation}\label{eq:partial3vhDetailed}
  \|\partial_{\be} V^{\be^\perp}\|_{\dot H^{1-\sigma,0}_\be}
  \lesssim
  \|\nabla V\|_2^\sigma
  \bigl(\|\nabla\omega_V^\be\|_2+\|\nabla d_V^\be\|_2\bigr)^{1-\sigma}
  \lesssim \A^{\sigma/2}\D^{(1-\sigma)/2} .
\end{equation}
Because $\Div V=0$, the two-dimensional Hodge identities in the plane
$\R^2_{\be^\perp}$ give, in Fourier variables,
\begin{equation}\label{eq:HodgeFourierBeta}
  |\widehat{\omega_V^\be}(\xi)|^2+|\widehat{d_V^\be}(\xi)|^2
  =|\xi\times\be|^2|\widehat{V^{\be^\perp}}(\xi)|^2 .
\end{equation}
Consequently,
\[
\begin{aligned}
\|\partial_{\be} V^{\be^\perp}\|_{\dot H^{1-\sigma,0}_\be}^2
&=\int |\xi\times\be|^{2(1-\sigma)}|\xi\cdot\be|^2
  |\widehat{V^{\be^\perp}}(\xi)|^2\,d\xi       \\
&\le \int
  \bigl(|\xi|^2|\widehat V(\xi)|^2\bigr)^\sigma
  \bigl(|\xi|^2(|\widehat{\omega_V^\be}|^2+|\widehat{d_V^\be}|^2)\bigr)^{1-\sigma}
  d\xi.
\end{aligned}
\]
H\"older's inequality proves \eqref{eq:partial3vhDetailed}.

Third, for $q\in\{\omega_u^\be,d_u^\be,\omega_b^\be,d_b^\be\}$, let $m(\xi)$ be
the symbol of $\mathcal R$.  Since $\mathcal R$ and
$(-\Delta_{\be^\perp})^\sigma$ are Fourier multipliers, and since $m$ is a
uniformly bounded degree-zero Mikhlin symbol,
\[
  \|\mathcal R q\|_{\dot H^{2\sigma,0}_\be}
  =\bigl\||\xi\times\be|^{2\sigma}m(\xi)\widehat q(\xi)\bigr\|_{L^2_\xi}
  \lesssim \|q\|_{\dot H^{2\sigma,0}_\be}.
\]
Moreover, $|\xi\times\be|\le|\xi|$, and the Fourier-side interpolation between
$L^2$ and $\dot H^1$ gives
\[
\begin{aligned}
  \|q\|_{\dot H^{2\sigma,0}_\be}^2
  &\le \int_{\R^3}|\xi|^{4\sigma}|\widehat q(\xi)|^2\,d\xi     \\
  &= \int_{\R^3}
  \bigl(|\widehat q(\xi)|^2\bigr)^{1-2\sigma}
  \bigl(|\xi|^2|\widehat q(\xi)|^2\bigr)^{2\sigma}\,d\xi       \\
  &\le \|q\|_2^{2(1-2\sigma)}\|\nabla q\|_2^{4\sigma}.
\end{aligned}
\]
Hence
\begin{equation}\label{eq:qHorizontalInterpolationDetailed}
  \|\mathcal R q\|_{\dot H^{2\sigma,0}_\be}
  \lesssim \|q\|_{\dot H^{2\sigma,0}_\be}
  \lesssim \|q\|_2^{1-2\sigma}\|\nabla q\|_2^{2\sigma}
  \lesssim \F^{(1-2\sigma)/2}\D^\sigma .
\end{equation}

Applying \eqref{eq:anisProductLaw} with
\[
  F=\partial_{\be}V^{\be^\perp},\qquad
  G=\nabla_{\be^\perp}^\#Z^\be,
  \qquad
  H=\mathcal Rq,
\]
and using \eqref{eq:GfactorBound}, \eqref{eq:partial3vhDetailed}, and
\eqref{eq:qHorizontalInterpolationDetailed}, we obtain
\begin{equation}\label{eq:preYoungLZDetailed}
\begin{aligned}
\left|\int
  (\partial_{\be} V^{\be^\perp}\cdot\nabla_{\be^\perp}^\#Z^\be)\,\mathcal Rq\,dx\right|
&\le \frac{C}{\sqrt{\sigma}}
  \F^{1/2}\|Z^\be\|_{\dot H^{3/2}}^{1-2\sigma}
  \A^{\sigma/2}\D^{(1+\sigma)/2} .
\end{aligned}
\end{equation}
Set
\[
  B:=\F^{1/2}\|Z^\be\|_{\dot H^{3/2}}^{1-2\sigma}\A^{\sigma/2}.
\]
Young's inequality with conjugate exponents $2/(1+\sigma)$ and $2/(1-\sigma)$
gives
\[
  \frac{C}{\sqrt\sigma}B\D^{(1+\sigma)/2}
  \le \eps\D
  +C_\eps\sigma^{-1/(1-\sigma)}B^{2/(1-\sigma)}.
\]
Since $0<\sigma\le1/5$, the factor
$\sigma^{-1/(1-\sigma)}$ is bounded by $C\sigma^{-1}$.  This proves
\eqref{eq:LZmixed}.
\end{proof}

\section{Moving vorticity-current system}\label{sec:vorticity_current}

Throughout this section we work on one continuity interval of $(\ta,\nv,\be)$.
The estimates are uniform on each interval and are then summed over the finitely
many intervals.  We also fix $3\le p<\infty$ and write
$\gamma_p=2p/(2p-3)$.

Set
\begin{equation}\label{eq:omegaCurrentNotation}
  \omega:=\omega_u^\be,
  \qquad d:=d_u^\be=\pa_{\be} u^\be,
  \qquad j:=\omega_b^\be,
  \qquad h:=d_b^\be=\pa_{\be} b^\be.
\end{equation}
We also set
\begin{equation}\label{eq:globalQuantities}
\begin{aligned}
\F(t)&=\|\omega(t)\|_2^2+\|d(t)\|_2^2+
       \|j(t)\|_2^2+\|h(t)\|_2^2,\\
\D_{\mu,\eta}(t)&=
\mu\bigl(\|\nabla\omega\|_2^2+\|\nabla d\|_2^2\bigr)
+\eta\bigl(\|\nabla j\|_2^2+\|\nabla h\|_2^2\bigr),\\
\D(t)&=\|\nabla\omega\|_2^2+\|\nabla d\|_2^2+
       \|\nabla j\|_2^2+\|\nabla h\|_2^2,\\
\A(t)&=\|\nabla u(t)\|_2^2+\|\nabla b(t)\|_2^2.
\end{aligned}
\end{equation}
Thus
\begin{equation}\label{eq:Dlambda}
  \D_{\mu,\eta}\ge\lambda\D.
\end{equation}
The vector vorticity and current are denoted by
\[
  \Omega=\curl u,
  \qquad
  J=\curl b.
\]
The MHD vorticity-current equations are
\begin{equation}\label{eq:vectorVorticityCurrent}
\left\{
\begin{aligned}
&\pa_t\Omega+u\cdot\nabla\Omega-b\cdot\nabla J-
\mu\Delta\Omega
=\Omega\cdot\nabla u-J\cdot\nabla b,\\
&\pa_tJ+u\cdot\nabla J-b\cdot\nabla\Omega-
\eta\Delta J
=J\cdot\nabla u-\Omega\cdot\nabla b
  +2\sum_{\ell\in\{\ta,\nv,\be\}}\nabla b^\ell\times\nabla u^\ell .
\end{aligned}\right.
\end{equation}
The last sum is the usual full component sum, written in the moving
orthonormal frame:
\[
  \sum_{\ell\in\{\ta,\nv,\be\}}\nabla b^\ell\times\nabla u^\ell
  =\sum_{i=1}^3\nabla b_i\times\nabla u_i .
\]
Projecting the vorticity-current system on $\be(t)$ and differentiating the
$\be$-components of the velocity and magnetic equations gives the following
scalar system.  The statement is exact on every continuity interval of the
moving frame; the terms denoted by $\mathcal C$ are precisely the linear
commutators produced by differentiating the time-dependent frame.

\begin{lemma}\label{lem:movingScalarSystem}
On every continuity interval of $(\ta,\nv,\be)$, the four quantities
$\omega,d,j,h$ defined in \eqref{eq:omegaCurrentNotation} satisfy
\begin{equation}\label{eq:structure}
\left\{
\begin{aligned}
&\pa_t\omega+u\cdot\nabla\omega-b\cdot\nabla j-
\mu\Delta\omega
=\mathfrak R_\omega^{NS}+\mathfrak R_\omega^{MHD}+\mathcal C_\omega,\\
&\pa_t d+u\cdot\nabla d-b\cdot\nabla h-
\mu\Delta d
=\mathfrak R_d^{NS}+\mathfrak R_d^{MHD}+\mathcal C_d,\\
&\pa_tj+u\cdot\nabla j-b\cdot\nabla\omega-
\eta\Delta j
=\mathfrak R_j^{MHD}+\mathcal N_\be+\mathcal C_j,\\
&\pa_th+u\cdot\nabla h-b\cdot\nabla d-
\eta\Delta h
=\mathfrak R_h^{MHD}+\mathcal C_h.
\end{aligned}\right.
\end{equation}
Here
\begin{equation}\label{eq:exactResidualsOmega}
\begin{aligned}
\mathfrak R_\omega^{NS}
&=d\omega-\partial_\be u^{\be^\perp}\cdot
  \nabla_{\be^\perp}^{\perp}u^\be,\\
\mathfrak R_\omega^{MHD}
&=-hj+\partial_\be b^{\be^\perp}\cdot
  \nabla_{\be^\perp}^{\perp}b^\be,
\end{aligned}
\end{equation}
\begin{equation}\label{eq:exactResidualsD}
\begin{aligned}
\mathfrak R_d^{NS}
&=-\partial_\be u\cdot\nabla u^\be
  -\partial_\be^2(-\Delta)^{-1}Q_u,\\
\mathfrak R_d^{MHD}
&=\partial_\be b\cdot\nabla b^\be
  +\partial_\be^2(-\Delta)^{-1}Q_b,
\end{aligned}
\end{equation}
where
\begin{equation}\label{eq:QuQb}
  Q_u=\sum_{\ell,m\in\{\ta,\nv,\be\}}\partial_\ell u^m\partial_m u^\ell,
  \qquad
  Q_b=\sum_{\ell,m\in\{\ta,\nv,\be\}}\partial_\ell b^m\partial_m b^\ell .
\end{equation}
The current-density and magnetic-divergence residuals are
\begin{equation}\label{eq:exactResidualsJh}
\begin{aligned}
\mathfrak R_j^{MHD}
&=d j-h\omega
+\partial_\be u^{\be^\perp}\cdot\nabla_{\be^\perp}^{\perp} b^\be
-\partial_\be b^{\be^\perp}\cdot\nabla_{\be^\perp}^{\perp} u^\be,\\
\mathfrak R_h^{MHD}
&=-\partial_\be u^{\be^\perp}\cdot\nabla_{\be^\perp} b^\be
  +\partial_\be b^{\be^\perp}\cdot\nabla_{\be^\perp} u^\be,
\end{aligned}
\end{equation}
and the purely horizontal current-Jacobian residual is
\begin{equation}\label{eq:currentNull}
  \mathcal N_\be
  =2\sum_{\ell\in\{\ta,\nv\}}J_\be(b^\ell,u^\ell),
  \qquad
  J_\be(f,g):=\pa_{\ta} f\,\pa_{\nv} g-\pa_{\nv} f\,\pa_{\ta} g.
\end{equation}
The pressure is normalized by
\begin{equation}\label{eq:pressure}
  -\Delta P=Q_u-Q_b,
\end{equation}
and $(-\Delta)^{-1}$ is understood with this convention.
\end{lemma}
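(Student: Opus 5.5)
The plan is a direct computation on a fixed continuity interval, where $\be,\ta,\nv$ are $H^1$ in time and hence absolutely continuous. I treat the four equations separately and collect every term arising from $\pa_t$ hitting the frame into the commutators $\mathcal C$. Since these are linear in $(u,b)$ with coefficients $\ta',\nv',\be'$, I will not compute them explicitly. For example, $\pa_t\omega=(\pa_t\Omega)\cdot\be+\Omega\cdot\be'$, because $\omega=\be\cdot\curl u$ by \eqref{eq:frame}. Likewise $\pa_td=\pa_\be\pa_t u^\be+(\be'\cdot\nabla)u^\be+\pa_\be(u\cdot\be')$. The commutators are therefore $\mathcal C_\omega=\Omega\cdot\be'$, and similarly for the others.

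\textbf{Equations for $\omega$ and $j$.} I dot \eqref{eq:vectorVorticityCurrent} with $\be$. Transport and diffusion commute with this projection at fixed $t$, which gives the left-hand sides. For the stretching terms I expand $\Omega\cdot\nabla u^\be=\Omega^\be\pa_\be u^\be+\Omega^\ta\pa_\ta u^\be+\Omega^\nv\pa_\nv u^\be$. In the frame, $\Omega^\ta=\pa_\nv u^\be-\pa_\be u^\nv$ and $\Omega^\nv=\pa_\be u^\ta-\pa_\ta u^\be$. The terms $\pa_\nv u^\be\pa_\ta u^\be$ cancel, leaving
\[
\omega d-\pa_\be u^{\be^\perp}\cdot\nabla^\perp_{\be^\perp}u^\be.
\]
The same computation applied to $J\cdot\nabla b^\be$ gives $\mathfrak R^{MHD}_\omega$. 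For $j$ the mixed terms $J\cdot\nabla u^\be-\Omega\cdot\nabla b^\be$ are handled in the same way, but now the cross terms $\pa_\nv b^\be\pa_\ta u^\be-\pa_\ta b^\be\pa_\nv u^\be$ do not cancel. They combine with the $\ell=\be$ part of $2\sum_\ell\be\cdot(\nabla b^\ell\times\nabla u^\ell)$, which equals $2J_\be(b^\be,u^\be)$. I expect the net result to be $dj-h\omega+\pa_\be u^{\be^\perp}\cdot\nabla^\perp_{\be^\perp}b^\be-\pa_\be b^{\be^\perp}\cdot\nabla^\perp_{\be^\perp}u^\be$. The $\ell\in\{\ta,\nv\}$ parts remain as $\mathcal N_\be$, because $\be\cdot(\nabla f\times\nabla g)=J_\be(f,g)$ when $\ta\cdot(\nv\times\be)=1$.

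\textbf{Equations for $d$ and $h$.} I dot the momentum equation with $\be$, use \eqref{eq:pressure}, and apply $\pa_\be$. Commuting $\pa_\be$ through $u\cdot\nabla u^\be$ produces $-\pa_\be u\cdot\nabla u^\be$, and similarly for the $b$ terms. The pressure term becomes $\pa_\be^2(-\Delta)^{-1}(Q_b-Q_u)$, which gives \eqref{eq:exactResidualsD}; here $Q_u=\operatorname{tr}(\nabla u)^2$ is frame-invariant. For the induction equation, $\pa_\be$ of $u\cdot\nabla b^\be-b\cdot\nabla u^\be$ gives $-\pa_\be u\cdot\nabla b^\be+\pa_\be b\cdot\nabla u^\be$. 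The $\be$-components $\pa_\be u^\be\pa_\be b^\be=dh$ cancel, leaving exactly $\mathfrak R^{MHD}_h$.

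The main obstacle is sign bookkeeping in the $j$ equation. I need to verify that the $\ell=\be$ piece of the Jacobian sum exactly absorbs the non-cancelling cross terms, so that only the horizontal Jacobians $\mathcal N_\be$ survive; Appendix-style full expansion in the frame is the route. The regularity needed to justify the calculations is available for $H^1$ strong solutions, which are smooth for $t>0$.
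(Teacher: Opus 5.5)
Your proposal is correct and follows the same route as the paper: you freeze the orthonormal frame and project the vorticity--current system and the $\be$-components of the momentum and induction equations. You then expand the stretching terms via $\Omega^\ta=\pa_\nv u^\be-\pa_\be u^\nv$, $\Omega^\nv=\pa_\be u^\ta-\pa_\ta u^\be$, and collect the $\be'$-terms into the linear commutators $\mathcal C$. The cancellation you flag does hold: the leftover cross terms in $J\cdot\nabla u^\be-\Omega\cdot\nabla b^\be$ are exactly $2\pa_\nv b^\be\pa_\ta u^\be-2\pa_\ta b^\be\pa_\nv u^\be=-2J_\be(b^\be,u^\be)$, which the $\ell=\be$ Jacobian term removes, as in Appendix~\ref{app:current}.
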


\begin{proof}
Since the frame is orthonormal and depends only on time, the spatial algebra is
rotation invariant and may be verified after freezing the frame.  The
frozen-frame computations are recorded in Appendix~\ref{app:expansions}.  Differentiating
the frame in time adds only the linear commutators
$\mathcal C_\omega,\mathcal C_d,\mathcal C_j,\mathcal C_h$, which are estimated
in Lemma~\ref{lem:commutators}.  This gives the exact system above on each
continuity interval.
\end{proof}

\begin{lemma}\label{lem:commutators}
For every $\varepsilon>0$, on every continuity interval of the frame,
\begin{equation}\label{eq:commutatorLemmaBound}
\begin{aligned}
&|\langle \mathcal C_\omega,\omega\rangle|+
  |\langle \mathcal C_d,d\rangle|+
  |\langle \mathcal C_j,j\rangle|+
  |\langle \mathcal C_h,h\rangle|        \\
&\qquad\le \varepsilon\D_{\mu,\eta}
  +C_{\varepsilon,\lambda} E_0\bigl(|\ta'(t)|^2+|\nv'(t)|^2+|\be'(t)|^2\bigr).
\end{aligned}
\end{equation}
\end{lemma}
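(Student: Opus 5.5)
We first make the commutators explicit.  On a continuity interval the frame vectors $\ta,\nv,\be$ are absolutely continuous in time, and the spatial operators $\pa_\ell=\ell(t)\cdot\nabla$ and the components $v^\ell=v\cdot\ell(t)$ depend on $t$ through the frame.  Differentiating $\omega=\pa_\ta u^\nv-\pa_\nv u^\ta$ in time gives
\[
  \pa_t\omega=(\pa_\ta\pa_t u^{\rm fr})^\nv-(\pa_\nv\pa_t u^{\rm fr})^\ta+\mathcal C_\omega,
\]
where the first two terms are the frozen-frame terms treated in Appendix~\ref{app:expansions}.  The remainder is
\[
  \mathcal C_\omega=\ta'\cdot\nabla u^\nv+\pa_\ta(u\cdot\nv')-\nv'\cdot\nabla u^\ta-\pa_\nv(u\cdot\ta').
\]
In the same way, $\mathcal C_d=\be'\cdot\nabla u^\be+\pa_\be(u\cdot\be')$, and $\mathcal C_j,\mathcal C_h$ are obtained by replacing $u$ by $b$.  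The point is that every commutator term has the form
\[
  \ell_1'(t)\,\cdot\,(\text{one spatial derivative of } v),\qquad v\in\{u,b\},\quad \ell_1\in\{\ta,\nv,\be\}.
\]
It is therefore linear in $\nabla v$, with coefficients bounded by $|\ta'|+|\nv'|+|\be'|$.  In particular, $|\mathcal C_q|\lesssim(|\ta'|+|\nv'|+|\be'|)|\nabla v|$ pointwise for each $q\in\{\omega,d,j,h\}$.

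A naive bound would give
\[
  |\langle\mathcal C_q,q\rangle|\lesssim |\ta'|\,\|\nabla v\|_2\|q\|_2 .
\]
This is not of the claimed form, because the right-hand side of \eqref{eq:commutatorLemmaBound} contains only $E_0$ and not $\A$.  Instead we integrate by parts to move the derivative onto $q$:
\[
  \langle \ell_1'\cdot\nabla v^{m},q\rangle=-\langle v^m,\ell_1'\cdot\nabla q\rangle,
\]
which is legitimate since $\ell_1'$ depends only on $t$.  This yields
\[
  |\langle\mathcal C_q,q\rangle|\lesssim(|\ta'|+|\nv'|+|\be'|)\,\|v\|_2\,\|\nabla q\|_2 .
\]

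We then apply Young's inequality, using the weights $\mu$ or $\eta$ according to whether $q$ is a velocity quantity ($\omega,d$) or a magnetic one ($j,h$):
\[
  C|\ell'|\,\|v\|_2\|\nabla q\|_2\le\varepsilon\mu\|\nabla q\|_2^2+C\varepsilon^{-1}\mu^{-1}|\ell'|^2\|v\|_2^2 ,
\]
and analogously with $\eta$.  The energy inequality \eqref{eq:energy} gives $\|u\|_2^2+\|b\|_2^2\le E_0$, and $\mu^{-1},\eta^{-1}\le\lambda^{-1}$.  Summing over the four quantities gives $\varepsilon\D_{\mu,\eta}+C_{\varepsilon,\lambda}E_0(|\ta'|^2+|\nv'|^2+|\be'|^2)$, which is \eqref{eq:commutatorLemmaBound}.

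The main obstacle is bookkeeping rather than analysis.  We must verify that the frozen-frame derivation in Appendix~\ref{app:expansions} absorbs every term in which the time derivative hits $u$ or $b$.  The claim to check is that $\mathcal C$ contains only frame-derivative terms, each carrying at most one spatial derivative of $u$ or $b$.  The $\pa_t$ falls on $u$ or $b$ only through the equations, and there are no second derivatives of the frame, because $\ell$ enters linearly in the components and in $\pa_\ell$.

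One further point needs checking: whether $\mathcal C_d$ and $\mathcal C_h$ pick up a pressure-related contribution.  Since $d$ and $h$ are defined directly as $\pa_\be u^\be$ and $\pa_\be b^\be$, the time differentiation produces only the displayed linear terms, and the pressure stays in the frozen part $\mathfrak R_d$.  The justification of the integration by parts at the $H^1$ strong-solution level, with $\nabla q\in L^2_tL^2$, follows by standard approximation.
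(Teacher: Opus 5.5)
Your proposal is correct and follows the paper's argument: the frame-derivative commutators are linear in $\nabla u$ or $\nabla b$, with spatially constant coefficients bounded by $|\ta'|+|\nv'|+|\be'|$; integration by parts moves the derivative onto $q$, giving $\|v\|_2\|\nabla q\|_2$; and Young's inequality with the energy bound closes the estimate. The only cosmetic difference is that the paper writes $\omega_z^\be=\be\cdot\curl z$ and $d_z^\be=\be_i\be_j\partial_i z^j$, so its commutators involve only $\be'$, whereas you expand in the full frame $(\ta,\nv,\be)$, which the paper itself notes yields the same list of terms.
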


\begin{proof}
The point is that the frame depends on time but not on space.  For a divergence-free
field $z$,
\[
  \omega_z^\be=\be\cdot\curl z,
  \qquad
  d_z^\be=\be_i\be_j\partial_i z^j.
\]
Differentiating in time gives the identities
\[
  \partial_t\omega_z^\be
  =\be\cdot\curl\partial_tz+\be'\cdot\curl z,
\]
and
\[
  \partial_td_z^\be
  =\be_i\be_j\partial_i\partial_tz^j
  +(\be_i'\be_j+\be_i\be_j')\partial_i z^j .
\]
Hence the extra terms created by differentiating the frame are linear
combinations of
\begin{equation}\label{eq:commutatorForms}
  \be'\cdot\curl z,
  \qquad
  (\be_i'\be_j+\be_i\be_j')\partial_i z^j,
\end{equation}
with $z=u$ or $z=b$.  If the frame is written through
$(\tau,\nu,\be)$ rather than only through $\be$, the same list is obtained after
using $\be=\tau\times\nu$; in any case the coefficients are bounded by
$C(|\tau'|+|\nu'|+|\be'|)$.

We show the estimate for the two displayed model terms.  Since the coefficients
in \eqref{eq:commutatorForms} are independent of $x$,
\[
\begin{aligned}
  |\langle \be'\cdot\curl z,q\rangle|
  &=|\langle z,\curl(\be' q)\rangle|
    \le C|\be'|\|z\|_2\|\nabla q\|_2,\\
  |\langle (\be_i'\be_j+\be_i\be_j')\partial_i z^j,q\rangle|
  &\le C|\be'|\|z\|_2\|\nabla q\|_2.
\end{aligned}
\]
Applying this with $(z,q)=(u,\omega),(u,d),(b,j),(b,h)$, using the energy bound
\eqref{eq:energy}, and then Young's inequality together with
$\D_{\mu,\eta}\ge\lambda\D$, gives \eqref{eq:commutatorLemmaBound}.
\end{proof}

\section{The horizontal current Jacobian estimate}\label{sec:nullform}

This section contains the main MHD estimate needed for the magnetic component
and current-density part of the criterion.  The key point is that the
transverse magnetic gradient can be split
into a current part measured in $L^p$ and a magnetic-component part measured in
$L^3$.

\begin{lemma}\label{lem:magneticHodgeL3}
Let $\Div b=0$, $3\le p<\infty$, and $(\ta,\nv,\be)$ be an
orthonormal frame.  On every fixed time slice,
\begin{equation}\label{eq:magneticHodgeSplit}
  \nabla_{\be^\perp}b^{\be^\perp}
  =\mathcal R_{\be^\perp}j^\be
   +\mathcal S_{\be^\perp}d_b^\be,
  \qquad
  j^\be=\omega_b^\be,
  \quad d_b^\be=\partial_\be b^\be,
\end{equation}
where $\mathcal R_{\be^\perp}$ and $\mathcal S_{\be^\perp}$ are finite families
of horizontal Calder\'on--Zygmund operators, uniformly with respect to the
orthonormal frame.  Moreover,
\begin{equation}\label{eq:magneticHodgeLpL3}
  \|\mathcal R_{\be^\perp}j^\be\|_{L^p}
  \le C_p\|j^\be\|_{L^p},
  \qquad
  \|\mathcal S_{\be^\perp}d_b^\be\|_{L^3}
  \le C\|b^\be\|_{\dot H^{3/2}}.
\end{equation}
In particular,
\begin{equation}\label{eq:magneticHodgeL2}
  \|\nabla_{\be^\perp}b^{\be^\perp}\|_{L^2}
  \le C\bigl(\|j^\be\|_{L^2}+\|d_b^\be\|_{L^2}\bigr).
\end{equation}
\end{lemma}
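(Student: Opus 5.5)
The plan is to derive the splitting \eqref{eq:magneticHodgeSplit} directly from the two-dimensional Hodge representation \eqref{eq:Hodge}, applied with $v=b$. Differentiating \eqref{eq:Hodge} in the transverse variables gives
\[
  \nabla_{\be^\perp}b^{\be^\perp}
  =\nabla_{\be^\perp}\nabla_{\be^\perp}^{\perp}\Delta_{\be^\perp}^{-1}j^\be
   -\nabla_{\be^\perp}\nabla_{\be^\perp}\Delta_{\be^\perp}^{-1}d_b^\be .
\]
So I set $\mathcal R_{\be^\perp}:=\nabla_{\be^\perp}\nabla_{\be^\perp}^{\perp}\Delta_{\be^\perp}^{-1}$ and $\mathcal S_{\be^\perp}:=-\nabla_{\be^\perp}\nabla_{\be^\perp}\Delta_{\be^\perp}^{-1}$. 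Their entries are $\pm R^{h}_aR^{h}_b$ with $a,b\in\{\ta,\nv\}$, where $R^h_a$ is the two-dimensional Riesz transform acting on each plane $\{x_\be=\mathrm{const}\}$.

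To avoid justifying $\Delta_{\be^\perp}^{-1}$ on an $H^1$ field, I would verify the identity on the Fourier side. The two relations \eqref{eq:2DHodgeIdentities} form a $2\times2$ linear system for $(\widehat{b^\ta},\widehat{b^\nv})$. Its determinant is $|\xi\times\be|^2$, which is nonzero for a.e. $\xi$. Solving it expresses $\xi_a\widehat{b^{c}}(\xi)$, for $a,c\in\{\ta,\nv\}$, as $m_{ac}(\xi)\widehat{j^\be}(\xi)+n_{ac}(\xi)\widehat{d_b^\be}(\xi)$. The multipliers $m_{ac},n_{ac}$ have the form $\xi_a\xi_c/|\xi\times\be|^2$ up to signs, and all are bounded by $1$. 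This proves \eqref{eq:magneticHodgeSplit} in $L^2$ for every $b\in H^1$ with $\Div b=0$. Uniformity in the frame is automatic, since a rotation maps $(\ta,\nv,\be)$ to the standard basis and the operators are conjugated accordingly.

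For the bounds, I argue as follows.

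\textbf{$L^p$ bound for $\mathcal R_{\be^\perp}$.} These operators are not Mikhlin multipliers on $\R^3$, because their symbols are singular along the whole $\be$-axis. They are, however, exactly the operators that act on each transverse plane by a two-dimensional double Riesz transform, with $x_\be$ frozen. By the two-dimensional Calder\'on--Zygmund theorem, for a.e. $x_\be$,
\[
\|\mathcal R_{\be^\perp}j^\be(\cdot,x_\be)\|_{L^p(\R^2)}\le C_p\|j^\be(\cdot,x_\be)\|_{L^p(\R^2)} .
\]
Raising to the $p$-th power and integrating in $x_\be$ (Fubini) gives the first estimate in \eqref{eq:magneticHodgeLpL3}. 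This is where $p<\infty$ enters.

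\textbf{$L^3$ bound for $\mathcal S_{\be^\perp}$.} The same fiberwise argument with exponent $3$ gives $\|\mathcal S_{\be^\perp}d_b^\be\|_{L^3}\le C\|d_b^\be\|_{L^3}$. Then the Sobolev embedding $\dot H^{1/2}(\R^3)\subset L^3$ and $|\xi\cdot\be|\le|\xi|$ give
\[
\|d_b^\be\|_{L^3}\lesssim\|\pa_\be b^\be\|_{\dot H^{1/2}}\le\|b^\be\|_{\dot H^{3/2}} .
\]

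\textbf{$L^2$ bound.} The estimate \eqref{eq:magneticHodgeL2} follows from Plancherel, since the symbols are bounded by $1$. Equivalently, it is the case $s=0$ of \eqref{eq:HodgeEstimate}.

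The estimates themselves are routine. The step I expect to need the most care is the rigorous meaning of the horizontal operators: they must be treated fiberwise rather than as three-dimensional Mikhlin multipliers, and one must check that the Fourier-side identity and the fiberwise $L^p$ action define the same operator. I would handle this by proving everything first for Schwartz $b$. There the fiberwise Fourier transform in $(x_\ta,x_\nv)$ commutes with the Fourier transform in $x_\be$, so the two definitions agree. The general case then follows by density, using the uniform $L^2$ and $L^p$ bounds just established.
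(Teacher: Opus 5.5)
Your proposal is correct and takes the same route as the paper. The transverse Hodge representation applied to $b$ gives the split into horizontal double Riesz transforms of $j^\be$ and $d_b^\be$; fiberwise two-dimensional Calder\'on--Zygmund bounds with Fubini give the $L^p$, $L^3$ and $L^2$ estimates; and $\dot H^{1/2}\hookrightarrow L^3$ converts $\|d_b^\be\|_{L^3}$ into $\|b^\be\|_{\dot H^{3/2}}$. Your Fourier-side check of the identity and the Schwartz-density argument matching the multiplier and fiberwise definitions add rigor the paper leaves implicit, but they do not change the argument.
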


\begin{proof}
Since $\Div b=0$, the horizontal vector field
$b^{\be^\perp}=(b^\ta,b^\nv)$ satisfies
\[
  \nabla_{\be^\perp}\cdot b^{\be^\perp}=-d_b^\be,
  \qquad
  \nabla_{\be^\perp}^{\perp}\cdot b^{\be^\perp}=j^\be.
\]
The two-dimensional Hodge representation in the transverse variables gives
\eqref{eq:magneticHodgeSplit}.  The operators
$\mathcal R_{\be^\perp}$ and $\mathcal S_{\be^\perp}$ are understood fiberwise:
for each fixed $x_\be\in\R_\be$ they act as ordinary two-dimensional
Calder\'on--Zygmund operators on the plane $\R^2_{\be^\perp}$.  If
$\mathcal T_{\be^\perp}$ denotes any operator in these finite families, then
Fubini's theorem and the two-dimensional Calder\'on--Zygmund theorem imply
\[
  \|\mathcal T_{\be^\perp}f\|_{L^q(\R^3)}\le C_q\|f\|_{L^q(\R^3)},
  \qquad 1<q<\infty,
\]
with $C_q$ independent of the orthonormal frame.  This proves the $L^p$ bound
for the current part and the $L^2$ bound.  For the divergence part,
$d_b^\be=\partial_\be b^\be$ and $\dot H^{1/2}(\R^3)\hookrightarrow L^3(\R^3)$
give
\[
  \|d_b^\be\|_{L^3}
  \lesssim \|\partial_\be b^\be\|_{\dot H^{1/2}}
  \lesssim \|b^\be\|_{\dot H^{3/2}}.
\]
This proves \eqref{eq:magneticHodgeLpL3}.
\end{proof}

\begin{lemma}\label{lem:nullform}
Let $3\le p<\infty$ and $\gamma_p=2p/(2p-3)$.  For every $\eps>0$,
\begin{equation}\label{eq:nullEstimate}
\left|
\int_{\R^3}2\sum_{\ell\in\{\ta,\nv\}}J_\be(b^\ell,u^\ell)\,j\,dx
\right|
\le
\eps\D_{\mu,\eta}
+C_{\eps,\lambda,p}\bigl(\|j(t)\|_{L^p}^{\gamma_p}
+\|b^\be(t)\|_{\dot H^{3/2}}^2\bigr)\F(t).
\end{equation}
\end{lemma}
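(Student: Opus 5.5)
The plan is to avoid any integration by parts and bound the integrand pointwise, then split the transverse magnetic gradient by Lemma~\ref{lem:magneticHodgeL3}. Since $J_\be(f,g)=\pa_\ta f\,\pa_\nv g-\pa_\nv f\,\pa_\ta g$, we have, pointwise,
\[
  \Bigl|2\sum_{\ell\in\{\ta,\nv\}}J_\be(b^\ell,u^\ell)\Bigr|
  \le C\,|\nabla_{\be^\perp}b^{\be^\perp}|\,|\nabla_{\be^\perp}u^{\be^\perp}|.
\]
We then insert the decomposition \eqref{eq:magneticHodgeSplit}, $\nabla_{\be^\perp}b^{\be^\perp}=\mathcal R_{\be^\perp}j+\mathcal S_{\be^\perp}h$. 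This bounds the left side of \eqref{eq:nullEstimate} by $I_1+I_2$, where
\[
  I_1=\int|\mathcal R_{\be^\perp}j|\,|\nabla_{\be^\perp}u^{\be^\perp}|\,|j|\,dx,
  \qquad
  I_2=\int|\mathcal S_{\be^\perp}h|\,|\nabla_{\be^\perp}u^{\be^\perp}|\,|j|\,dx .
\]
The velocity factor is controlled through the Hodge estimate \eqref{eq:HodgeEstimate}. With $s=0$ it gives $\|\nabla_{\be^\perp}u^{\be^\perp}\|_2\lesssim\|\omega\|_2+\|d\|_2\le C\F^{1/2}$. With $s=1$ it gives $\|\nabla\nabla_{\be^\perp}u^{\be^\perp}\|_2\lesssim\|\nabla\omega\|_2+\|\nabla d\|_2\le C\D^{1/2}$. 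The function $j$ obeys the same $L^2$ and $\dot H^1$ bounds directly.

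\emph{Current part.} Let $r=2p/(p-1)$, so that $1/p+2/r=1$ and $r\in(2,3]$ for $p\ge3$. H\"older's inequality and the $L^p$ bound \eqref{eq:magneticHodgeLpL3} give
\[
  I_1\le C_p\|j\|_{L^p}\,\|\nabla_{\be^\perp}u^{\be^\perp}\|_{L^r}\|j\|_{L^r}.
\]
The Gagliardo--Nirenberg inequality on $\R^3$ gives $\|f\|_{L^r}\lesssim\|f\|_2^{1-\theta}\|\nabla f\|_2^{\theta}$ with $\theta=3(1/2-1/r)=3/(2p)$. Applying it to both factors yields
\[
  I_1\le C_p\|j\|_{L^p}\F^{1-\frac{3}{2p}}\D^{\frac{3}{2p}} .
\]
We then use Young's inequality with the conjugate exponents $2p/3$ on $\D^{3/(2p)}$ and $2p/(2p-3)=\gamma_p$ on the remaining factor. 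This gives $\eps'\D+C\|j\|_{L^p}^{\gamma_p}\F$, because $\F^{(1-3/(2p))\gamma_p}=\F$. This exponent match is exactly the origin of $\gamma_p$.

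\emph{Magnetic-component part.} H\"older's inequality in $L^3\times L^3\times L^3$, together with \eqref{eq:magneticHodgeLpL3}, gives
\[
  I_2\le C\|b^\be\|_{\dot H^{3/2}}\|\nabla_{\be^\perp}u^{\be^\perp}\|_{L^3}\|j\|_{L^3}.
\]
Gagliardo--Nirenberg with $\theta=1/2$ bounds the last two factors by $C\F^{1/2}\D^{1/2}$. Young's inequality then gives $I_2\le\eps'\D+C\|b^\be\|_{\dot H^{3/2}}^2\F$.

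Finally, choose $\eps'=\eps\lambda/2$ and use $\D_{\mu,\eta}\ge\lambda\D$ from \eqref{eq:Dlambda}. This yields \eqref{eq:nullEstimate} with constant $C_{\eps,\lambda,p}$.

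The only genuinely delicate step is the first one: making sure that the whole Jacobian residual is dominated by $|\nabla_{\be^\perp}b^{\be^\perp}|\,|\nabla_{\be^\perp}u^{\be^\perp}|$, and that the velocity factor is then controlled by $\F$ and $\D$ through the transverse Hodge estimate alone. The point is that neither the full $\nabla u$ nor the quantity $\A$ may enter. The frame-uniformity of the horizontal Calder\'on--Zygmund bounds was already recorded in Lemma~\ref{lem:magneticHodgeL3}, and the remaining steps are routine H\"older, interpolation and Young arguments.
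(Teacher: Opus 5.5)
Your proposal is correct and is essentially the paper's proof: the same pointwise domination by $|\nabla_{\be^\perp}b^{\be^\perp}|\,|\nabla_{\be^\perp}u^{\be^\perp}|$, the same split $\nabla_{\be^\perp}b^{\be^\perp}=\mathcal R_{\be^\perp}j+\mathcal S_{\be^\perp}h$ from Lemma~\ref{lem:magneticHodgeL3}, and the same Gagliardo--Nirenberg plus Young step that produces $\gamma_p$. The only difference is how you distribute the H\"older exponents: the paper keeps $\nabla_{\be^\perp}u^{\be^\perp}$ in $L^2$ (so it needs only the $s=0$ Hodge bound) and puts $j$ in $L^{2p/(p-2)}$, resp.\ $L^6$, whereas your symmetric choice $L^{2p/(p-1)}$, resp.\ $L^3$, requires the $s=1$ Hodge bound on the velocity factor but, as a side benefit, does not need $p\ge3$ in this lemma.
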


\begin{proof}
By the definition of $J_\be$,
\begin{equation}\label{eq:nullStart}
\left|
\int2\sum_{\ell\in\{\ta,\nv\}}J_\be(b^\ell,u^\ell)j\,dx
\right|
\lesssim
\int |\nabla_{\be^\perp}b^{\be^\perp}|\,|\nabla_{\be^\perp}u^{\be^\perp}|\,|j|\,dx .
\end{equation}
Use the split Hodge formula \eqref{eq:magneticHodgeSplit}.  The divergence part
is estimated exactly at the endpoint $L^3$ level:
\begin{equation}\label{eq:nullDivergencePart}
\begin{aligned}
&\int |\mathcal S_{\be^\perp}h|\,
     |\nabla_{\be^\perp}u^{\be^\perp}|\,|j|\,dx       \\
&\quad\lesssim
\|b^\be\|_{\dot H^{3/2}}
\|\nabla_{\be^\perp}u^{\be^\perp}\|_2
\|j\|_{L^6}                                                   \\
&\quad\lesssim
\|b^\be\|_{\dot H^{3/2}}\F^{1/2}\D_{\mu,\eta}^{1/2}.
\end{aligned}
\end{equation}
For the current part set
\[
  r_p=\frac{2p}{p-2},
  \qquad
  \frac1p+\frac12+\frac1{r_p}=1 .
\]
The Gagliardo--Nirenberg inequality gives
\[
  \|j\|_{L^{r_p}}
  \lesssim
  \|j\|_2^{1-3/p}\|\nabla j\|_2^{3/p}.
\]
Hence
\begin{equation}\label{eq:nullCurrentPart}
\begin{aligned}
&\int |\mathcal R_{\be^\perp}j|\,
     |\nabla_{\be^\perp}u^{\be^\perp}|\,|j|\,dx       \\
&\quad\le
C_p\|j\|_{L^p}
\|\nabla_{\be^\perp}u^{\be^\perp}\|_2
\|j\|_{L^{r_p}}                                      \\
&\quad\lesssim_p
\|j\|_{L^p}\F^{1-\frac{3}{2p}}\D_{\mu,\eta}^{\frac{3}{2p}} .
\end{aligned}
\end{equation}
Since $0<3/(2p)\le1/2$, Young's inequality yields
\begin{equation}\label{eq:nullYoungSerrin}
\|j\|_{L^p}\F^{1-\frac{3}{2p}}\D_{\mu,\eta}^{\frac{3}{2p}}
\le
\eps\D_{\mu,\eta}
+C_{\eps,\lambda,p}\|j\|_{L^p}^{\gamma_p}\F .
\end{equation}
The term in \eqref{eq:nullDivergencePart} is bounded by
$\eps\D_{\mu,\eta}
+C_{\eps,\lambda}\|b^\be\|_{\dot H^{3/2}}^2\F$.  Combining these estimates proves
\eqref{eq:nullEstimate}.
\end{proof}

\begin{remark}
The estimate above explains why the scalar magnetic component $b^\be$ alone is
not enough in the present method.  The factor
$\nabla_{\be^\perp}b^{\be^\perp}$ in \eqref{eq:nullStart} is determined by both
the horizontal divergence $-\partial_\be b^\be$ and the horizontal curl
$j^\be=\be\cdot\curl b$.  The first quantity follows from $b^\be$, whereas the
second is independent information and is precisely the current-density component
assumed in Theorem~\ref{thm:main}.
\end{remark}

\section{Moving-frame a priori estimates and horizontal-gradient bounds}\label{sec:moving_apriori}

\begin{proposition}\label{prop:movingEstimate}
Let $(u,b)$ be a smooth solution of \eqref{eq:MHD} on a continuity interval of
the frame.  For every $0<\sigma\le1/5$,
\begin{equation}\label{eq:mainMovingEstimate}
\begin{aligned}
\frac{d}{dt}\F(t)+\frac12\D_{\mu,\eta}(t)
&\le
C_{\lambda,p}E_0\bigl(|\ta'(t)|^2+|\nv'(t)|^2+|\be'(t)|^2\bigr)\\
&\quad
+C_{\lambda,p}\Bigl(\|u^\be(t)\|_{\dot H^{3/2}}^2
+\|b^\be(t)\|_{\dot H^{3/2}}^2
+\|j(t)\|_{L^p}^{\gamma_p}\Bigr)\F(t)                 \\
&\quad
+\frac{C_{\lambda,p}}{\sigma}
\F(t)^{\frac1{1-\sigma}}
\A(t)^{\frac{\sigma}{1-\sigma}}                                      \\
&\qquad\qquad\times
\bigl(\|u^\be(t)\|_{\dot H^{3/2}}^2
+\|b^\be(t)\|_{\dot H^{3/2}}^2\bigr)^{\frac{1-2\sigma}{1-\sigma}} .
\end{aligned}
\end{equation}
Here $C_{\lambda,p}$ may depend on $p$ and on negative powers of
$\lambda=\min\{\mu,\eta\}$.
\end{proposition}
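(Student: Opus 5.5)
Test each equation of the scalar system \eqref{eq:structure} against the corresponding unknown: $\omega$ in the first, $d$ in the second, $j$ in the third, $h$ in the fourth. Adding the four identities gives
\[
\tfrac12\tfrac{d}{dt}\F+\D_{\mu,\eta}=\text{(residual pairings)}+\text{(commutator pairings)} .
\]
The left side arises because the transport terms $u\cdot\nabla$ are skew ($\Div u=0$). The cross terms cancel in pairs:
\[
\langle b\cdot\nabla j,\omega\rangle+\langle b\cdot\nabla\omega,j\rangle=0,\qquad
\langle b\cdot\nabla h,d\rangle+\langle b\cdot\nabla d,h\rangle=0,
\]
since $\Div b=0$. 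So only the right-hand sides of \eqref{eq:structure} remain.

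The commutator pairings are handled by Lemma~\ref{lem:commutators} with a small $\eps$. This yields the $E_0(|\ta'|^2+|\nv'|^2+|\be'|^2)$ term. The current-Jacobian pairing $\langle\mathcal N_\be,j\rangle$ is handled by Lemma~\ref{lem:nullform}. It produces $\eps\D_{\mu,\eta}$ plus $(\|j\|_{L^p}^{\gamma_p}+\|b^\be\|_{\dot H^{3/2}}^2)\F$. All $\eps\D_{\mu,\eta}$ contributions are absorbed using $\D_{\mu,\eta}\ge\lambda\D$, which leaves $\frac12\D_{\mu,\eta}$ after doubling.

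The residuals split into two classes. The first class is the quadratic products of the unknowns, such as $d\omega$, $hj$, $dj$, $h\omega$, paired with a further unknown. I would estimate these as
\[
\int|d||q||q'|\le\|d\|_{L^3}\|q\|_2\|q'\|_{L^6},\qquad \|d\|_{L^3}\lesssim\|u^\be\|_{\dot H^{3/2}}\ \text{(and }\|h\|_{L^3}\lesssim\|b^\be\|_{\dot H^{3/2}}\text{)},
\]
using $\dot H^{1/2}\hookrightarrow L^3$. Young's inequality then gives $\eps\D+C(\|u^\be\|_{\dot H^{3/2}}^2+\|b^\be\|_{\dot H^{3/2}}^2)\F$. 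This works because every such product contains $d$ or $h$.

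The second class is the mixed terms of the form $\partial_\be V^{\be^\perp}\cdot\nabla^\#_{\be^\perp}Z^\be$ occurring in $\mathfrak R^{NS}_\omega$, $\mathfrak R^{MHD}_\omega$, $\mathfrak R^{MHD}_j$ and $\mathfrak R^{MHD}_h$. Each is paired directly with $q\in\{\omega,j,h\}$, i.e. with $\mathcal R=\mathrm{Id}$, and Lemma~\ref{lem:LZproduct} bounds it. Its output $\F^{1/(1-\sigma)}\|Z^\be\|_{\dot H^{3/2}}^{2(1-2\sigma)/(1-\sigma)}\A^{\sigma/(1-\sigma)}/\sigma$ is dominated by the stated singular term with $\|u^\be\|^2+\|b^\be\|^2$.

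The main obstacle is the $d$-equation residuals $\mathfrak R_d^{NS},\mathfrak R_d^{MHD}$, together with the pressure-type terms $\partial_\be^2(-\Delta)^{-1}Q_u$ and $\partial_\be^2(-\Delta)^{-1}Q_b$. For $-\partial_\be u\cdot\nabla u^\be$ I would split it as
\[
-\partial_\be u\cdot\nabla u^\be=-\partial_\be u^{\be^\perp}\cdot\nabla_{\be^\perp}u^\be-d^2 .
\]
The first piece is of Lemma~\ref{lem:LZproduct} type and the second is of the $L^3$ type above. For $Q_u$ I would expand in the frame, using $\Div u=0$, as
\[
Q_u=\sum_{\ell,m\in\{\ta,\nv\}}\partial_\ell u^m\partial_m u^\ell+2\,\partial_\be u^{\be^\perp}\cdot\nabla_{\be^\perp}u^\be+d^2 .
\]
The horizontal block equals $d^2-2J_\be(u^\ta,u^\nv)$, since $\partial_\ta u^\ta+\partial_\nv u^\nv=-d$. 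The Jacobian $J_\be(u^\ta,u^\nv)$ is not directly small. So I would move $\partial_\be^2(-\Delta)^{-1}$ onto the test function and put one $\partial_\be$ on the Jacobian's arguments. Writing $u^{\be^\perp}$ through the Hodge representation \eqref{eq:Hodge}, $\partial_\be$ of $\nabla_{\be^\perp}u^{\be^\perp}$ becomes Riesz transforms of $\partial_\be\omega$ and $\partial_\be d$. This reduces the term to products of the form $\partial_\be V^{\be^\perp}\cdot\nabla_{\be^\perp}Z^\be$ (using $\partial_\ta u^\nv-\partial_\nv u^\ta=\omega$) and $d\cdot(\ldots)$, tested against $\mathcal R d$ with $\mathcal R=\partial_\be^2\Delta^{-1}$ or its compositions with Riesz transforms. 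This is exactly the reason Lemma~\ref{lem:LZproduct} allows general Mikhlin multipliers. The analogous treatment applies to $Q_b$ and $\partial_\be b\cdot\nabla b^\be$, with $Z=b$. I expect this algebraic reduction of the pressure Jacobian to be the delicate step. If it fails, I would instead check the Appendix expansions to confirm that the residual structure already isolates those mixed forms. Once every term has been placed in one of the classes above, summing gives \eqref{eq:mainMovingEstimate}.
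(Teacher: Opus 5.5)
Your skeleton matches the paper's proof. You test by $\omega,d,j,h$, use the transport and cross-advection cancellations, and invoke Lemma~\ref{lem:commutators} and Lemma~\ref{lem:nullform}. You use $L^3$--$L^2$--$L^6$ bounds for products containing $d$ or $h$, and Lemma~\ref{lem:LZproduct} for the mixed forms $\pa_\be V^{\be^\perp}\cdot\nabla^\#_{\be^\perp}Z^\be$; these include the horizontal--vertical pressure block tested against $\pa_\be^2(-\Delta)^{-1}d$.

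The gap is exactly the step you flag: the horizontal--horizontal pressure block. Your identity $d^2-2J_\be(u^\ta,u^\nv)$ is correct, but the reduction you propose for the Jacobian does not work. First, $J_\be(u^\ta,u^\nv)$ involves only $u^{\be^\perp}$ and transverse derivatives. Neither $\Div u=0$ nor the Hodge formula \eqref{eq:Hodge} produces a factor $\nabla_{\be^\perp}u^\be$ from it, so it cannot become a sum of Lemma~\ref{lem:LZproduct}-type forms. Second, after moving one $\pa_\be$ onto the Jacobian, the remaining test factor is $\pa_\be(-\Delta)^{-1}d=\pa_\be^2(-\Delta)^{-1}u^\be$. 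This is an order-zero transform of $u^\be$, which $\|u^\be\|_{\dot H^{3/2}}$ does not control in $L^\infty$. As written, neither this term nor its magnetic analogue $J_\be(b^\ta,b^\nv)$ from $Q_b$ is closed, and the fallback to the appendix is not an argument.

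The missing observation is simple: in the $d$-equation the test function $d$ itself carries the criterion. For any degree-zero multiplier $\Riesz$,
\[
\|\Riesz^*d\|_{L^3}\lesssim\|d\|_{\dot H^{1/2}}\le\|u^\be\|_{\dot H^{3/2}} .
\]
Take $\Riesz=\pa_\be^2(-\Delta)^{-1}$, use \eqref{eq:HodgeEstimate} with $s=0$ and $s=1$, and use $\dot H^1\hookrightarrow L^6$. Then
\[
\Bigl|\int_{\R^3}\Riesz\bigl(J_\be(u^\ta,u^\nv)\bigr)\,d\,dx\Bigr|
\lesssim\|\Riesz^*d\|_{L^3}\,\|\nabla_{\be^\perp}u^{\be^\perp}\|_{2}\,\|\nabla_{\be^\perp}u^{\be^\perp}\|_{L^6}
\lesssim\|u^\be\|_{\dot H^{3/2}}\,\F^{1/2}\D^{1/2}.
\]
Young's inequality turns this into $\eps\D_{\mu,\eta}+C_{\eps,\lambda}\|u^\be\|_{\dot H^{3/2}}^2\F$. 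This is the paper's \eqref{eq:NSHHbound}: your first class already covers the Jacobian once the test function supplies the $L^3$ factor, and no derivative shifting is needed.

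The same display with $b^{\be^\perp}$ in place of $u^{\be^\perp}$ handles the magnetic block, via $\|\nabla_{\be^\perp}b^{\be^\perp}\|_{L^6}\lesssim\|\nabla j\|_2+\|\nabla h\|_2$. The paper instead splits $\nabla_{\be^\perp}b^{\be^\perp}$ by Lemma~\ref{lem:magneticHodgeL3} and pays with $\|j\|_{L^p}^{\gamma_p}$ and $\|b^\be\|_{\dot H^{3/2}}^2$; both outcomes are allowed in \eqref{eq:mainMovingEstimate}. With this replacement the rest of your argument goes through.
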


The proof is divided into two parts.  First, we write out the MHD terms that are
not present in the Navier--Stokes argument.  Second, we insert these estimates
into the moving-frame energy identity.

\subsection{The non-null MHD terms}

In this subsection $\mathcal R$ denotes a generic degree-zero Mikhlin multiplier
with constants depending only on the dimension and on the finite list of
operators under consideration.  Its precise form may change from line to line;
the examples needed below are $\partial_i\partial_j\Delta^{-1}$,
$\partial_{\be}^2\Delta^{-1}$, $2\partial_{\be}^2\Delta^{-1}-I$, and their
adjoints.  These operators are bounded on $L^p$, $1<p<\infty$, and on the
anisotropic Sobolev and Besov spaces used in Lemma~\ref{lem:LZproduct}.

\begin{lemma}\label{lem:nonNullMHD}
Let $\mathcal E_{\rm nn}$ denote the contribution to the $L^2$ energy identity
for $(\omega,d,j,h)$ coming from all MHD nonlinearities except the horizontal
current Jacobian residual $\mathcal N_\be$.  Then, for every
$0<\sigma\le1/5$ and every $\varepsilon>0$,
\begin{equation}\label{eq:nonNullDetailedBound}
\begin{aligned}
|\mathcal E_{\rm nn}|
&\le
\varepsilon\D_{\mu,\eta}
+C_{\varepsilon,\lambda,p}\Bigl(\|u^\be(t)\|_{\dot H^{3/2}}^2
+\|b^\be(t)\|_{\dot H^{3/2}}^2+\|j(t)\|_{L^p}^{\gamma_p}\Bigr)\F(t)                    \\
&\quad
+\frac{C_{\varepsilon,\lambda,p}}{\sigma}
\F(t)^{\frac1{1-\sigma}}
\bigl(\|u^\be(t)\|_{\dot H^{3/2}}^2
+\|b^\be(t)\|_{\dot H^{3/2}}^2\bigr)^{\frac{1-2\sigma}{1-\sigma}}
\A(t)^{\frac{\sigma}{1-\sigma}} .
\end{aligned}
\end{equation}
\end{lemma}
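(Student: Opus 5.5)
The plan is to list every MHD nonlinearity that enters the $L^2$ energy identity for $(\omega,d,j,h)$, other than $\mathcal N_\be$, and sort the terms into three classes. Each class is handled by a result already proved.

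\textbf{Class (i): transport and cross-transport terms.} These are
\[
\langle u\cdot\nabla q,q\rangle,\qquad
\langle b\cdot\nabla j,\omega\rangle+\langle b\cdot\nabla\omega,j\rangle,\qquad
\langle b\cdot\nabla h,d\rangle+\langle b\cdot\nabla d,h\rangle .
\]
The first vanishes because $\Div u=0$. Each cross pair sums to zero after integrating by parts, since $\Div b=0$. So no estimate is needed here.

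\textbf{Class (ii): products of two scalar quantities.} These are the terms $-hj$, $dj$, $-h\omega$ tested against $\omega,j$, together with the analogous zero-order pieces of $\mathfrak R_d^{MHD}$, $\mathfrak R_h^{MHD}$. A typical one is $\int h\,j\,\omega$, where $h=\partial_\be b^\be$. I bound it by
\[
\|h\|_{L^3}\|j\|_{L^2}\|\omega\|_{L^6}\lesssim \|b^\be\|_{\dot H^{3/2}}\F^{1/2}\D^{1/2},
\]
and Young's inequality gives $\eps\D_{\mu,\eta}+C\|b^\be\|_{\dot H^{3/2}}^2\F$. The same treatment applies to factors $d=\partial_\be u^\be$, using $\|u^\be\|_{\dot H^{3/2}}$.

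\textbf{Class (iii): mixed products.} These have the form $\partial_\be V^{\be^\perp}\cdot\nabla^\#_{\be^\perp}Z^\be$ with $V,Z\in\{u,b\}$, tested against $\omega,d,j,h$. They come from:
\begin{itemize}[label={}]
\item the extra terms in $\mathfrak R_\omega^{MHD}$, $\mathfrak R_j^{MHD}$, $\mathfrak R_h^{MHD}$;
\item the parts of $\mathfrak R_d^{MHD}$ in which the nonlinearity has an $\be^\perp$-index structure;
\item the pressure term $\partial_\be^2(-\Delta)^{-1}Q_b$, after moving the zero-order operator $\partial_\be^2\Delta^{-1}$ onto the test function $q$ by duality.
\end{itemize}
Lemma~\ref{lem:LZproduct} applies directly to each of these. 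It produces the singular term $\sigma^{-1}\F^{1/(1-\sigma)}\|Z^\be\|_{\dot H^{3/2}}^{2(1-2\sigma)/(1-\sigma)}\A^{\sigma/(1-\sigma)}$. This is bounded by the displayed term, since $\|Z^\be\|_{\dot H^{3/2}}^2\le\|u^\be\|_{\dot H^{3/2}}^2+\|b^\be\|_{\dot H^{3/2}}^2$.

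\textbf{Main obstacle: the quadratic form $Q_b$ and $\partial_\be b\cdot\nabla b^\be$.} I would expand both in the frame. Terms where some factor is a $\be$-derivative of $b^\be$ or a gradient of $b^\be$ fall into class (ii) or (iii). The remaining terms have the purely horizontal form
\[
\sum_{\ell,m\in\{\ta,\nv\}}\partial_\ell b^m\partial_m b^\ell .
\]
For this horizontal block I would use the 2D identity
\[
\textstyle\sum_{\ell,m}\partial_\ell b^m\partial_mb^\ell=(\nabla_{\be^\perp}\!\cdot b^{\be^\perp})^2-2J_\be(b^\ta,b^\nv)=h^2-2J_\be(b^\ta,b^\nv).
\]
The $h^2$ part goes into class (ii). The Jacobian part is bounded through Lemma~\ref{lem:magneticHodgeL3} exactly as in the proof of Lemma~\ref{lem:nullform}:
\begin{itemize}[label={}]
\item the factor $\nabla_{\be^\perp}b^{\be^\perp}$ splits into $\mathcal R_{\be^\perp}j$, measured in $L^p$, and $\mathcal S_{\be^\perp}h$, measured in $L^3$;
\item the other factor $\nabla_{\be^\perp}b^{\be^\perp}$ is bounded in $L^2$ by $\F^{1/2}$ via \eqref{eq:magneticHodgeL2};
\item the test function $\mathcal R q$ is placed in $L^{r_p}$ or $L^6$ using the Mikhlin bounds.
\end{itemize}
Gagliardo--Nirenberg and Young then give $\eps\D_{\mu,\eta}+C(\|j\|_{L^p}^{\gamma_p}+\|b^\be\|_{\dot H^{3/2}}^2)\F$. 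This is where the $\|j\|_{L^p}^{\gamma_p}$ term in the statement originates.

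I expect the bookkeeping that sorts the terms of $\partial_\be b\cdot\nabla b^\be$ and $Q_b$ into these three classes to be the delicate step. In particular, I must check that no term has the bad form $\partial_\be b^{\be^\perp}\cdot\nabla_{\be^\perp}b^{\be^\perp}$ left over without structure. If such a term remains, I would use the Hodge representation \eqref{eq:Hodge} to rewrite $b^{\be^\perp}$ through $j,h$. The term then becomes $\partial_\be b^{\be^\perp}$ times a zero-order operator applied to $j$ or $h$, and this falls under the same Hölder/Lemma~\ref{lem:magneticHodgeL3} estimates.

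Finally, I sum all the bounds and choose $\eps$ small relative to $\lambda$, which gives \eqref{eq:nonNullDetailedBound}.
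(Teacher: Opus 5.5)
Your proposal is correct and follows the paper's proof essentially step for step. The algebraic terms are bounded using $\|d\|_{L^3}\lesssim\|u^\be\|_{\dot H^{3/2}}$ and $\|h\|_{L^3}\lesssim\|b^\be\|_{\dot H^{3/2}}$. All mixed terms, including the horizontal--vertical pressure piece with $\mathcal R^*$ moved onto $d$, go through Lemma~\ref{lem:LZproduct}. The horizontal--horizontal block of $Q_b$ is handled by the Hodge split of Lemma~\ref{lem:magneticHodgeL3}, with $j$ in $L^p$, $\mathcal S_{\be^\perp}h$ in $L^3$ and $\mathcal R^*d$ in $L^{r_p}$ or $L^6$. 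Rewriting that block as $h^2-2J_\be(b^\ta,b^\nv)$ is a harmless but unnecessary detour.

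The leftover term $\partial_\be b^{\be^\perp}\cdot\nabla_{\be^\perp}b^{\be^\perp}$ that you worry about never occurs; see \eqref{eq:pressureSplitB} and $\partial_\be b\cdot\nabla b^\be=h^2+\partial_\be b^{\be^\perp}\cdot\nabla_{\be^\perp}b^\be$. This matters, because your fallback as stated would not close: $\partial_\be b^{\be^\perp}$ is controlled in $L^2$ only by $\A^{1/2}$, not by $\F^{1/2}$.
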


\begin{proof}
We list the terms generated by the magnetic part of the four equations and
estimate them one by one.  For orientation, the full MHD contribution to the
energy identity is decomposed as
\[
  \mathcal E_{\rm MHD}=\mathcal E_\omega^M+\mathcal E_d^M+
  \mathcal E_j^M+\mathcal E_h^M,
\]
where, with the sign convention in Appendix~\ref{app:expansions} and with
absolute values taken in the estimates, the terms have the forms
\[
\begin{aligned}
\mathcal E_j^M
&=\int(dj-h\omega)j
  +\int(\partial_{\be} u^{\be^\perp}\cdot\nabla_{\be^\perp}^{\perp} b^\be
      -\partial_{\be} b^{\be^\perp}\cdot\nabla_{\be^\perp}^{\perp} u^\be)j
  +\int\mathcal N_\be j,\\
\mathcal E_\omega^M
&=\int(-hj+\partial_{\be} b^{\be^\perp}\cdot\nabla_{\be^\perp}^{\perp} b^\be)\omega,\\
\mathcal E_h^M
&=\int(-\partial_{\be} u^{\be^\perp}\cdot\nabla_{\be^\perp} b^\be
      +\partial_{\be} b^{\be^\perp}\cdot\nabla_{\be^\perp}u^\be)h,\\
\mathcal E_d^M
&=\int\left(\partial_{\be} b\cdot\nabla b^\be
      +\partial_{\be}^2(-\Delta)^{-1}Q_b\right)d .
\end{aligned}
\]
The current-density equation is exactly the third equation in Lemma~\ref{lem:movingScalarSystem}:
\begin{equation}\label{eq:EjSplit}
\begin{aligned}
\mathcal E_j
&=\int_{\R^3}(d j-h\omega)j\,dx                                      \\
&\quad+\int_{\R^3}
\bigl(\partial_{\be} u^{\be^\perp}\cdot\nabla_{\be^\perp}^{\perp} b^\be
      -\partial_{\be} b^{\be^\perp}\cdot\nabla_{\be^\perp}^{\perp} u^\be\bigr)j\,dx                 \\
&\quad+\int_{\R^3}\mathcal N_\be j\,dx .
\end{aligned}
\end{equation}
The last integral is excluded from $\mathcal E_{\rm nn}$ and is handled in Lemma~\ref{lem:nullform}.  The first line of \eqref{eq:EjSplit} is algebraic.  Since
\[
  \|d\|_{L^3}\lesssim \|u^\be\|_{\dot H^{3/2}},
  \qquad
  \|h\|_{L^3}=\|\partial_\be b^\be\|_{L^3}\lesssim \|b^\be\|_{\dot H^{3/2}},
\]
we have
\begin{equation}\label{eq:algCurrent}
\begin{aligned}
\left|\int d|j|^2\,dx\right|
&\le \|d\|_{L^3}\|j\|_2\|j\|_{L^6}
 \le \varepsilon\D_{\mu,\eta}
 +C_{\varepsilon,\lambda}\|u^\be\|_{\dot H^{3/2}}^2\F,\\
\left|\int h\omega j\,dx\right|
&\le \|h\|_{L^3}\|\omega\|_2\|j\|_{L^6}
 \le \varepsilon\D_{\mu,\eta}
 +C_{\varepsilon,\lambda}\|b^\be\|_{\dot H^{3/2}}^2\F.
\end{aligned}
\end{equation}
Here and below we use $\|q\|_{L^6}\lesssim\|\nabla q\|_2$ for
$q\in\{\omega,d,j,h\}$ and \eqref{eq:Dlambda} to absorb derivatives into
$\D_{\mu,\eta}$.

The second line of \eqref{eq:EjSplit} is of the mixed anisotropic type covered
by Lemma~\ref{lem:LZproduct}.  Taking $(V,Z,q)=(u,b,j)$ and
$(V,Z,q)=(b,u,j)$ gives
\begin{equation}\label{eq:mixedCurrent}
\begin{aligned}
&\left|\int
\bigl(\partial_{\be} u^{\be^\perp}\cdot\nabla_{\be^\perp}^{\perp} b^\be
      -\partial_{\be} b^{\be^\perp}\cdot\nabla_{\be^\perp}^{\perp} u^\be\bigr)j\,dx\right|       \\
&\qquad\le
\varepsilon\D_{\mu,\eta}
+\frac{C_{\varepsilon,\lambda}}{\sigma}
\F^{\frac1{1-\sigma}}
\bigl(\|u^\be\|_{\dot H^{3/2}}^2
+\|b^\be\|_{\dot H^{3/2}}^2\bigr)^{\frac{1-2\sigma}{1-\sigma}}
\A^{\frac{\sigma}{1-\sigma}} .
\end{aligned}
\end{equation}

The magnetic part of the $\omega$ equation is the magnetic analogue of the Navier--Stokes stretching term:
\begin{equation}\label{eq:EomegaMag}
\mathcal E_\omega^{M}
=
\int_{\R^3}\bigl(-h j+\partial_{\be} b^{\be^\perp}\cdot\nabla_{\be^\perp}^{\perp} b^\be\bigr)\omega\,dx .
\end{equation}
The algebraic term is estimated as in \eqref{eq:algCurrent}, while the mixed
term is estimated by Lemma~\ref{lem:LZproduct} with
$(V,Z,q)=(b,b,\omega)$.  Thus
\begin{equation}\label{eq:EomegaMagBound}
\begin{aligned}
|\mathcal E_\omega^{M}|
&\le
\varepsilon\D_{\mu,\eta}
+C_{\varepsilon,\lambda}\|b^\be\|_{\dot H^{3/2}}^2\F             \\
&\quad+
\frac{C_{\varepsilon,\lambda}}{\sigma}
\F^{\frac1{1-\sigma}}
\|b^\be\|_{\dot H^{3/2}}^{\frac{2(1-2\sigma)}{1-\sigma}}
\A^{\frac{\sigma}{1-\sigma}} .
\end{aligned}
\end{equation}

The $h=\partial_{\be} b^\be$ equation follows by applying $\partial_{\be}$ to
the $\be$-component of the induction equation.  Apart from moving-frame
commutators, its nonlinear part is
\begin{equation}\label{eq:EhMag}
\mathcal E_h
=
\int_{\R^3}
\bigl(-\partial_{\be} u^{\be^\perp}\cdot\nabla_{\be^\perp} b^\be
      +\partial_{\be} b^{\be^\perp}\cdot\nabla_{\be^\perp} u^\be\bigr)h\,dx .
\end{equation}
The apparent scalar terms $-dh$ and $hd$ cancel exactly.  Applying
Lemma~\ref{lem:LZproduct} with $(V,Z,q)=(u,b,h)$ and $(V,Z,q)=(b,u,h)$ yields
\begin{equation}\label{eq:EhMagBound}
\begin{aligned}
|\mathcal E_h|
&\le
\varepsilon\D_{\mu,\eta}                                      \\
&\quad+
\frac{C_{\varepsilon,\lambda}}{\sigma}
\F^{\frac1{1-\sigma}}
\bigl(\|u^\be\|_{\dot H^{3/2}}^2
+\|b^\be\|_{\dot H^{3/2}}^2\bigr)^{\frac{1-2\sigma}{1-\sigma}}
\A^{\frac{\sigma}{1-\sigma}} .
\end{aligned}
\end{equation}

We next estimate the magnetic terms in the $d=\partial_{\be} u^\be$ equation.
With the sign convention of Appendix~\ref{app:pressureSplit}, the part
containing $b$ is written as
\begin{equation}\label{eq:EdMagStructure}
\mathfrak R_d^M
=
\partial_{\be} b\cdot\nabla b^\be
+\mathcal R_3
\sum_{\ell,m\in\{\ta,\nv,\be\}} \partial_\ell b^m\partial_m b^\ell,
\qquad \mathcal R_3:=\partial_{\be}^2(-\Delta)^{-1}.
\end{equation}
The pressure sum is decomposed as
\begin{equation}\label{eq:pressureSplitB}
\sum_{\ell,m\in\{\ta,\nv,\be\}} \partial_\ell b^m\partial_m b^\ell
=\sum_{\ell,m\in\{\ta,\nv\}}\partial_\ell b^m\partial_m b^\ell
+2\sum_{\ell\in\{\ta,\nv\}}\partial_\ell b^\be\partial_{\be} b^\ell+h^2.
\end{equation}
Pairing \eqref{eq:EdMagStructure} with $d$ gives three types of terms.

First, the direct term $\partial_{\be} b\cdot\nabla b^\be$ equals
\[
  h^2+\partial_{\be} b^{\be^\perp}\cdot\nabla_{\be^\perp} b^\be .
\]
The contribution of $h^2d$ is algebraic:
\begin{equation}\label{eq:hhd}
  \left|\int h^2d\,dx\right|
  \le \|h\|_{L^3}\|h\|_2\|d\|_{L^6}
  \le \varepsilon\D_{\mu,\eta}
  +C_{\varepsilon,\lambda}\|b^\be\|_{\dot H^{3/2}}^2\F.
\end{equation}
The contribution of
$(\partial_{\be} b^{\be^\perp}\cdot\nabla_{\be^\perp} b^\be)d$ is again
controlled by Lemma~\ref{lem:LZproduct}, with $(V,Z,q)=(b,b,d)$.

Second, consider the horizontal-horizontal part of the pressure term:
\[
  \int_{\R^3}\mathcal R
  \left(\sum_{\ell,m\in\{\ta,\nv\}}\partial_\ell b^m\partial_m b^\ell\right)d\,dx .
\]
By duality and the $L^q$ boundedness of $\mathcal R$, we split
$\nabla_{\be^\perp}b^{\be^\perp}$ according to Lemma~\ref{lem:magneticHodgeL3}.
The divergence part satisfies
\begin{equation}\label{eq:horizontalPressureBDiv}
\begin{aligned}
&\left|\int \mathcal R
  \left((\mathcal S_{\be^\perp}h)\,\nabla_{\be^\perp}b^{\be^\perp}\right)d\,dx\right|        \\
&\quad \lesssim
\|b^\be\|_{\dot H^{3/2}}\|\nabla_{\be^\perp}b^{\be^\perp}\|_2\|\mathcal R^*d\|_{L^6}
\le \varepsilon\D_{\mu,\eta}
+C_{\varepsilon,\lambda}\|b^\be\|_{\dot H^{3/2}}^2\F .
\end{aligned}
\end{equation}
For the current part, with $r_p=2p/(p-2)$,
\begin{equation}\label{eq:horizontalPressureBCurrent}
\begin{aligned}
&\left|\int \mathcal R
  \left((\mathcal R_{\be^\perp}j)\,\nabla_{\be^\perp}b^{\be^\perp}\right)d\,dx\right|        \\
&\quad \le
C_p\|j\|_{L^p}\|\nabla_{\be^\perp}b^{\be^\perp}\|_2\|\mathcal R^*d\|_{L^{r_p}} \\
&\quad \lesssim_p
\|j\|_{L^p}\F^{1-\frac{3}{2p}}\D_{\mu,\eta}^{\frac{3}{2p}}
\le \varepsilon\D_{\mu,\eta}+C_{\varepsilon,\lambda,p}\|j\|_{L^p}^{\gamma_p}\F .
\end{aligned}
\end{equation}
Thus
\begin{equation}\label{eq:horizontalPressureB}
\left|\int \mathcal R
  \left(\sum_{\ell,m\in\{\ta,\nv\}}\partial_\ell b^m\partial_m b^\ell\right)d\,dx\right|
\le \varepsilon\D_{\mu,\eta}
+C_{\varepsilon,\lambda,p}\bigl(\|j\|_{L^p}^{\gamma_p}
+\|b^\be\|_{\dot H^{3/2}}^2\bigr)\F .
\end{equation}
We used Lemma~\ref{lem:magneticHodgeL3}, the Hodge estimate
$\|\nabla_{\be^\perp}b^{\be^\perp}\|_2\lesssim\|j\|_2+\|h\|_2\lesssim\F^{1/2}$,
and the interpolation
$\|d\|_{L^{r_p}}\lesssim \|d\|_2^{1-3/p}\|\nabla d\|_2^{3/p}$.

Third, by \eqref{eq:pressureSplitB}, the pressure terms with at least one
vertical index are finite sums of
\[
  \int \mathcal R(h^2)d\,dx,
  \qquad
  \int \mathcal R(\partial_\ell b^\be\,\partial_{\be} b^\ell)d\,dx,
  \qquad \ell\in\{\ta,\nv\} .
\]
The first one is bounded exactly as \eqref{eq:hhd}, after moving $\mathcal R$
onto $d$.  The second one is bounded by Lemma~\ref{lem:LZproduct} with $q=d$
and the zero-order multiplier chosen as $\mathcal R^*$.  Therefore
\begin{equation}\label{eq:EdMagBound}
\begin{aligned}
\left|\int_{\R^3}\mathfrak R_d^M d\,dx\right|
&\le
\varepsilon\D_{\mu,\eta}
+C_{\varepsilon,\lambda,p}\bigl(\|j\|_{L^p}^{\gamma_p}
+\|b^\be\|_{\dot H^{3/2}}^2\bigr)\F                 \\
&\quad+
\frac{C_{\varepsilon,\lambda}}{\sigma}
\F^{\frac1{1-\sigma}}
\|b^\be\|_{\dot H^{3/2}}^{\frac{2(1-2\sigma)}{1-\sigma}}
\A^{\frac{\sigma}{1-\sigma}} .
\end{aligned}
\end{equation}

Combining \eqref{eq:algCurrent}--\eqref{eq:EdMagBound}, choosing
$\varepsilon>0$ sufficiently small, and using
\[
  \|u^\be\|_{\dot H^{3/2}}^{\frac{2(1-2\sigma)}{1-\sigma}}+
  \|b^\be\|_{\dot H^{3/2}}^{\frac{2(1-2\sigma)}{1-\sigma}}
  \le C\bigl(\|u^\be\|_{\dot H^{3/2}}^2
  +\|b^\be\|_{\dot H^{3/2}}^2\bigr)^{\frac{1-2\sigma}{1-\sigma}},
\]
we obtain \eqref{eq:nonNullDetailedBound}.
\end{proof}

We next record the corresponding velocity-only estimate.  This is the
moving-frame version of the Navier--Stokes contribution.

\begin{lemma}\label{lem:NSContribution}
Let $\mathfrak R^{NS}_\omega$ and $\mathfrak R^{NS}_d$ denote the $u$-only terms
in the first two equations of \eqref{eq:structure}.  Then, for every
$0<\sigma\le1/5$ and every $\varepsilon>0$,
\begin{equation}\label{eq:NSContributionLemma}
\begin{aligned}
|\langle\mathfrak R^{NS}_\omega,\omega\rangle|+
|\langle\mathfrak R^{NS}_d,d\rangle|
&\le
\varepsilon\D_{\mu,\eta}
+C_{\varepsilon,\lambda}\|u^\be(t)\|_{\dot H^{3/2}}^2\F(t)                         \\
&\quad +\frac{C_{\varepsilon,\lambda}}{\sigma}
\F(t)^{\frac1{1-\sigma}}
\|u^\be(t)\|_{\dot H^{3/2}}^{\frac{2(1-2\sigma)}{1-\sigma}}
\A(t)^{\frac{\sigma}{1-\sigma}} .
\end{aligned}
\end{equation}
\end{lemma}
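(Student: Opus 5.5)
We estimate the two pairings term by term, mirroring the magnetic estimates of Lemma~\ref{lem:nonNullMHD} with $b$ replaced by $u$. No current component enters, since the velocity analogue of the Hodge split is controlled by $\omega$ and $d$ alone. We write $\mathfrak R^{NS}_\omega=d\omega-\partial_\be u^{\be^\perp}\cdot\nabla^\perp_{\be^\perp}u^\be$ and $\mathfrak R^{NS}_d=-\partial_\be u\cdot\nabla u^\be-\partial_\be^2(-\Delta)^{-1}Q_u$.

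\emph{Vorticity pairing.} The algebraic part is handled with $\|d\|_{L^3}\lesssim\|u^\be\|_{\dot H^{3/2}}$:
\[
  \Bigl|\int d\omega^2\,dx\Bigr|\le\|d\|_{L^3}\|\omega\|_2\|\omega\|_{L^6}
  \le\varepsilon\D_{\mu,\eta}+C_{\varepsilon,\lambda}\|u^\be\|_{\dot H^{3/2}}^2\F .
\]
The mixed part $\int(\partial_\be u^{\be^\perp}\cdot\nabla^\perp_{\be^\perp}u^\be)\,\omega\,dx$ is exactly of the form of Lemma~\ref{lem:LZproduct} with $(V,Z,q)=(u,u,\omega)$ and $\mathcal R=I$. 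This produces the singular term with $\|u^\be\|_{\dot H^{3/2}}^{2(1-2\sigma)/(1-\sigma)}$.

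\emph{Divergence pairing, direct term.} We split $\partial_\be u\cdot\nabla u^\be=d^2+\partial_\be u^{\be^\perp}\cdot\nabla_{\be^\perp}u^\be$. The term $\int d^3\,dx$ is bounded by $\|d\|_{L^3}\|d\|_2\|d\|_{L^6}$ and then by Young, as in \eqref{eq:hhd}. The mixed piece is covered by Lemma~\ref{lem:LZproduct} with $(V,Z,q)=(u,u,d)$.

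\emph{Divergence pairing, pressure term.} We decompose $Q_u$ as in \eqref{eq:pressureSplitB}:
\[
  Q_u=\sum_{\ell,m\in\{\ta,\nv\}}\partial_\ell u^m\partial_m u^\ell+2\sum_{\ell\in\{\ta,\nv\}}\partial_\ell u^\be\,\partial_\be u^\ell+d^2 .
\]
The $d^2$ piece is treated as $h^2$ was, after moving $\mathcal R_3=\partial_\be^2(-\Delta)^{-1}$ onto $d$ by duality, using $L^q$ boundedness. The mixed piece is handled by Lemma~\ref{lem:LZproduct} with $q=d$ and multiplier $\mathcal R_3^*$.

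\emph{Main obstacle: the horizontal--horizontal pressure piece.} This is the term $\int\mathcal R_3(\nabla_{\be^\perp}u^{\be^\perp}\nabla_{\be^\perp}u^{\be^\perp})\,d\,dx$, which in the magnetic case required the current component. Here we exploit the 2D structure instead. The first step is the algebraic identity
\[
  \sum_{\ell,m\in\{\ta,\nv\}}\partial_\ell u^m\partial_m u^\ell=(\nabla_{\be^\perp}\!\cdot u^{\be^\perp})^2-2J_\be(u^\ta,u^\nv)=d^2-2J_\be(u^\ta,u^\nv).
\]
The $d^2$ part is as before. For the Jacobian, the Hodge representation \eqref{eq:Hodge} gives $u^{\be^\perp}=\nabla^\perp_{\be^\perp}\psi-\nabla_{\be^\perp}\phi$ with $\Delta_{\be^\perp}\psi=\omega$ and $\Delta_{\be^\perp}\phi=d$. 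We then expand $J_\be(u^\ta,u^\nv)$ into products of horizontal Riesz transforms. Each product containing at least one factor built from $d$ is bounded like the divergence part in \eqref{eq:horizontalPressureBDiv}:
\[
  \|\mathcal S_{\be^\perp}d\|_{L^3}\|\nabla_{\be^\perp}u^{\be^\perp}\|_2\|\mathcal R_3^*d\|_{L^6}.
\]
The remaining pure-vorticity part is $\det\nabla^2_{\be^\perp}\psi$, which is the subtle piece. We plan to write it in divergence form, $\tfrac12\nabla_{\be^\perp}\!\cdot(\ldots)$, integrate by parts onto $\mathcal R_3^*d$, and absorb using only horizontal information. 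If that fails, the fallback is to pair it with $d$ via the anisotropic law \eqref{eq:anisProductLaw}, putting the $\dot H^{3/2}$ norm of $u^\be$ on the factor $\mathcal R_3^*d$. The expected outcome is again a bound by $\varepsilon\D_{\mu,\eta}+C\|u^\be\|_{\dot H^{3/2}}^2\F$ plus the singular term.

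Summing all pieces and taking $\varepsilon$ small yields \eqref{eq:NSContributionLemma}.
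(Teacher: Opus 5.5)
\textbf{Gap: the pure-vorticity pressure piece is left unestimated.} Most of your plan coincides with the paper's proof. The algebraic terms $d\omega^2$, $d^3$ and the $d^2$ pressure piece are handled via $\|d\|_{L^3}\lesssim\|u^\be\|_{\dot H^{3/2}}$. The mixed terms go through Lemma~\ref{lem:LZproduct} with $\mathcal R=I$ or $\mathcal R_3^*$, using the same splitting of $Q_u$. The gap is exactly the step you call the main obstacle: you never estimate the pure-vorticity part $\int_{\R^3}\mathcal R_3\bigl(\det\nabla^2_{\be^\perp}\psi\bigr)\,d\,dx$ of the horizontal--horizontal block. You offer a plan and a fallback but verify neither, so as written the lemma is not proved.

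Moreover, the plan does not obviously close. Integrating by parts moves a derivative onto $\mathcal R_3^*d$. This leaves a factor $\nabla_{\be^\perp}\psi$ at the level of $u^{\be^\perp}$, and its $L^6$ norm is controlled only through $\|\nabla u^{\be^\perp}\|_2\le\A^{1/2}$, not through $\F$. The natural bound is then $\A^{1/2}\F^{1/4}\D^{3/4}$, i.e.\ $\A^2\F$ after Young. That is not of a form Lemma~\ref{lem:iteration} can absorb, since only $\A\in L^1_t$ is known.

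\textbf{The missing observation.} Nothing subtle is needed, because the test function $d$ itself supplies the critical factor: $\|\mathcal R_3^*d\|_{L^3}\lesssim\|d\|_{\dot H^{1/2}}\le\|u^\be\|_{\dot H^{3/2}}$. The matrix $\nabla^2_{\be^\perp}\psi$ is a horizontal double Riesz transform of $\omega$, which has a bounded symbol and commutes with $\nabla$. H\"older with exponents $(2,6,3)$ and Sobolev then give
\begin{align*}
\Bigl|\int_{\R^3}\det\bigl(\nabla^2_{\be^\perp}\psi\bigr)\,\mathcal R_3^*d\,dx\Bigr|
&\lesssim \|\nabla^2_{\be^\perp}\psi\|_2\,\|\nabla^2_{\be^\perp}\psi\|_{L^6}\,\|\mathcal R_3^*d\|_{L^3}\\
&\lesssim \|\omega\|_2\,\|\nabla\omega\|_2\,\|u^\be\|_{\dot H^{3/2}}
\le \varepsilon\D_{\mu,\eta}+C_{\varepsilon,\lambda}\|u^\be\|_{\dot H^{3/2}}^2\F .
\end{align*}

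This is the paper's argument, see \eqref{eq:NSHHbound}, and it makes your Jacobian identity and the $\psi/\phi$ expansion unnecessary. Writing $\nabla_{\be^\perp}u^{\be^\perp}$ as horizontal Riesz transforms of $(\omega,d)$ via \eqref{eq:Hodge}, every term of the horizontal--horizontal block has the form $\int(\mathcal R_hq_1)(\mathcal R_hq_2)\,\mathcal R_3^*d\,dx$ with $q_1,q_2\in\{\omega,d\}$. The same bound, with $L^3$ on $\mathcal R_3^*d$, handles them all at once. Unlike the current Jacobian $\mathcal N_\be$, which is tested against $j$, here there is no structural obstruction to remove.
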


\begin{proof}
After freezing the frame, the $u$-only part of the $(\omega,d)$ system is the
same scalar system used in the moving one-velocity-component Navier--Stokes
argument of Liu--Zhang.  More explicitly,
Appendix~\ref{app:fixedScalarEquations} gives
\[
  \mathfrak R_\omega^{NS}
  =d\omega-\partial_{\be} u^{\be^\perp}\cdot\nabla_{\be^\perp}^{\perp} u^\be,
\]
and, with $Q_u=\sum_{\ell,m}\partial_\ell u^m\partial_m u^\ell$,
\[
  \mathfrak R_d^{NS}
  =-\partial_{\be} u\cdot\nabla u^\be
   -\partial_{\be}^2(-\Delta)^{-1}Q_u,
\]
up to the harmless sign convention for $(-\Delta)^{-1}$.
We decompose the pressure quadratic form as
\begin{equation}\label{eq:pressureSplitU}
  Q_u=\sum_{\ell,m\in\{\ta,\nv\}}\partial_\ell u^m\partial_m u^\ell
  +2\sum_{\ell\in\{\ta,\nv\}}\partial_\ell u^\be\partial_{\be} u^\ell+d^2 .
\end{equation}
Since $d=\partial_{\be} u^\be$ and
$\nabla_{\be^\perp}\cdot u^{\be^\perp}=-d$, the horizontal
Hodge formula gives
\begin{equation}\label{eq:uHorizontalHodgeNS}
  \nabla_{\be^\perp} u^{\be^\perp}=\mathcal R_h(\omega,d),
\end{equation}
where $\mathcal R_h$ denotes a finite family of horizontal Calder\'on--Zygmund
operators.  Therefore the horizontal-horizontal pressure block, after moving
the zero-order multiplier $\partial_{\be}^2(-\Delta)^{-1}$ onto $d$, is a finite
sum of terms
\begin{equation}\label{eq:NSHHmodel}
  \int_{\R^3} q_1q_2\,\Riesz d\,dx,
  \qquad q_1,q_2\in\{\omega,d\}
\end{equation}
with $\Riesz$ a degree-zero multiplier.  Sobolev embedding and the
one-velocity-component control give
\[
  \norm{\Riesz d}{L^3}
  \lesssim \norm{d}{\dot H^{1/2}}
  \lesssim \norm{u^\be}{\dot H^{3/2}}.
\]
Thus, using also $\norm{q_2}{L^6}\lesssim\norm{\nabla q_2}{2}$,
\begin{equation}\label{eq:NSHHbound}
  \left|\int q_1q_2\,\Riesz d\,dx\right|
  \lesssim \norm{u^\be}{\dot H^{3/2}}\F^{1/2}\D^{1/2}
  \le \varepsilon\D_{\mu,\eta}
  +C_{\varepsilon,\lambda}\norm{u^\be}{\dot H^{3/2}}^2\F.
\end{equation}
The algebraic pieces coming from $d\omega$, from
$-\partial_{\be} u\cdot\nabla u^\be$, and from the $d^2$ pressure block are
estimated in the same way; for
$q=\omega$ or $d$,
\begin{equation}\label{eq:NSAlgebraicBound}
  \norm{d}{L^3}\norm{q}{2}\norm{q}{L^6}
  \lesssim \norm{u^\be}{\dot H^{3/2}}\F^{1/2}\D^{1/2}
  \le \varepsilon\D_{\mu,\eta}
  +C_{\varepsilon,\lambda}\norm{u^\be}{\dot H^{3/2}}^2\F.
\end{equation}
It remains to estimate the horizontal-vertical pressure block in
\eqref{eq:pressureSplitU} and the mixed direct term
$\partial_{\be} u^{\be^\perp}\cdot\nabla_{\be^\perp}^\#u^\be$.  These terms are
finite sums of
\begin{equation}\label{eq:NSMixedModel}
  \int_{\R^3}
  (\partial_{\be} u^{\be^\perp}\cdot\nabla_{\be^\perp}^\#u^\be)\Riesz q\,dx,
  \qquad q\in\{\omega,d\},
\end{equation}
where $\nabla_{\be^\perp}^\#$ denotes either $\nabla_{\be^\perp}$ or $\nabla_{\be^\perp}^{\perp}$.
Lemma~\ref{lem:LZproduct}, applied with $V=Z=u$, yields
\begin{equation}\label{eq:NSMixedBound}
  \left|\int_{\R^3}(\partial_{\be} u^{\be^\perp}\cdot\nabla_{\be^\perp}^\#u^\be)\Riesz q\,dx\right|
  \le \varepsilon\D_{\mu,\eta}
  +\frac{C_{\varepsilon,\lambda}}{\sigma}
  \F^{\frac1{1-\sigma}}
  \norm{u^\be}{\dot H^{3/2}}^{\frac{2(1-2\sigma)}{1-\sigma}}
  \A^{\frac{\sigma}{1-\sigma}} .
\end{equation}
Combining \eqref{eq:NSHHbound}, \eqref{eq:NSAlgebraicBound}, and
\eqref{eq:NSMixedBound}, summing over the finite list of terms, and choosing
$\varepsilon$ small enough proves \eqref{eq:NSContributionLemma}.
\end{proof}

\subsection{Proof of Proposition~\ref{prop:movingEstimate}}

\begin{proof}[Proof of Proposition~\ref{prop:movingEstimate}]
We multiply the four equations in \eqref{eq:structure} by $\omega,d,j,h$,
respectively, integrate over $\R^3$, and sum the results.

The transport terms vanish or cancel.  Since $\Div u=0$,
\[
  \int (u\cdot\nabla\omega)\omega\,dx=0,
  \qquad
  \int (u\cdot\nabla j)j\,dx=0,
\]
and the same holds for $d$ and $h$.  The cross-advection terms cancel in pairs because $\Div b=0$:
\[
  -\int (b\cdot\nabla j)\omega\,dx
  -\int (b\cdot\nabla\omega)j\,dx=0,
  \qquad
  -\int (b\cdot\nabla h)d\,dx
  -\int (b\cdot\nabla d)h\,dx=0.
\]
The diffusion gives $\D_{\mu,\eta}$.

The moving-frame commutators are controlled by Lemma~\ref{lem:commutators}:
\begin{equation}\label{eq:commEstimate}
  |\langle \mathcal C_\omega,\omega\rangle|+
  |\langle \mathcal C_d,d\rangle|+
  |\langle \mathcal C_j,j\rangle|+
  |\langle \mathcal C_h,h\rangle|
  \le \varepsilon\D_{\mu,\eta}
  +C_{\varepsilon,\lambda} E_0\bigl(|\ta'(t)|^2+|\nv'(t)|^2+|\be'(t)|^2\bigr).
\end{equation}

The Navier--Stokes part is controlled by Lemma~\ref{lem:NSContribution}:
\begin{equation}\label{eq:NSContribution}
\begin{aligned}
|\langle\mathfrak R^{NS}_\omega,\omega\rangle|+
|\langle\mathfrak R^{NS}_d,d\rangle|
&\le
\varepsilon\D_{\mu,\eta}
+C_{\varepsilon,\lambda}\|u^\be(t)\|_{\dot H^{3/2}}^2\F(t)                         \\
&\quad +\frac{C_{\varepsilon,\lambda}}{\sigma}
\F(t)^{\frac1{1-\sigma}}
\|u^\be(t)\|_{\dot H^{3/2}}^{\frac{2(1-2\sigma)}{1-\sigma}}
\A(t)^{\frac{\sigma}{1-\sigma}} .
\end{aligned}
\end{equation}

The non-null MHD terms are controlled by Lemma~\ref{lem:nonNullMHD}.  The
remaining MHD term is the horizontal current Jacobian residual.
Lemma~\ref{lem:nullform}, with $\varepsilon$ chosen small enough, gives
\begin{equation}\label{eq:nullUse}
  |\langle \mathcal N_\be,j\rangle|
  \le \varepsilon\D_{\mu,\eta}
  +C_{\varepsilon,\lambda,p}\bigl(\|j(t)\|_{L^p}^{\gamma_p}
  +\|b^\be(t)\|_{\dot H^{3/2}}^2\bigr)\F(t).
\end{equation}
Combining \eqref{eq:commEstimate}, \eqref{eq:NSContribution},
\eqref{eq:nonNullDetailedBound}, and \eqref{eq:nullUse}, and then choosing
$\varepsilon>0$ sufficiently small in the four absorbed estimates, yields
\eqref{eq:mainMovingEstimate}.
\end{proof}

\subsection{Boundedness of the moving horizontal gradients}\label{sec:horizontal_gradients}

Throughout this subsection we work on one continuity interval of the moving
frame.  If $\be$ has jump times, the same argument is applied separately on each
continuity interval, after translating the initial time to the left endpoint.
At a jump time the strong solution is continuous in $H^1$, so the reinitialized
quantities $\F$ and $\A$ are finite in the new orthonormal frame; since the
number of jumps is finite, a finite induction gives the asserted bounds on the
whole interval.

Assume the criterion quantity is finite on $[0,T]$:
\begin{equation}\label{eq:finiteQT}
  \int_0^T\Bigl(
  \|u^\be(t)\|_{\dot H^{3/2}}^2
  +\|b^\be(t)\|_{\dot H^{3/2}}^2
  +\|j(t)\|_{L^p}^{\gamma_p}\Bigr)\,dt<\infty,
  \qquad T<T_*.
\end{equation}
Since $\be\in\Om(T_*)$ and the number of jumps is finite,
\begin{equation}\label{eq:finiteK}
  \int_0^T\bigl(|\ta'(t)|^2+|\nv'(t)|^2+|\be'(t)|^2\bigr)\,dt<\infty.
\end{equation}
By the energy inequality,
\begin{equation}\label{eq:finiteA}
  \int_0^T\A(t)\,dt
  \le \lambda^{-1}E_0<\infty.
\end{equation}
We now apply Lemma~\ref{lem:iteration} to Proposition~\ref{prop:movingEstimate} with
\[
\begin{aligned}
  f&=\F,
  \qquad
  Y=\A,\\
  X(t)&=\|u^\be(t)\|_{\dot H^{3/2}}^2
       +\|b^\be(t)\|_{\dot H^{3/2}}^2,\\
  V_1(t)&=X(t)+\|j(t)\|_{L^p}^{\gamma_p},\\
  V_2(t)&=|\ta'(t)|^2+|\nv'(t)|^2+|\be'(t)|^2 .
\end{aligned}
\]
The hypotheses of the lemma follow from \eqref{eq:finiteQT}, \eqref{eq:finiteK},
and \eqref{eq:finiteA}.  Hence
\begin{equation}\label{eq:Fbounded}
  \sup_{0\le t<T}\F(t)<\infty.
\end{equation}
Fix, for instance, $\sigma=1/5$.  By H\"older's inequality,
\[
\begin{aligned}
&\int_0^T
  \bigl(\|u^\be(t)\|_{\dot H^{3/2}}^2
  +\|b^\be(t)\|_{\dot H^{3/2}}^2\bigr)^{\frac{1-2\sigma}{1-\sigma}}
  \A(t)^{\frac{\sigma}{1-\sigma}}\,dt                                      \\
&\quad\le
  \left(\int_0^T\bigl(\|u^\be(t)\|_{\dot H^{3/2}}^2
  +\|b^\be(t)\|_{\dot H^{3/2}}^2\bigr)\,dt\right)^{\frac{1-2\sigma}{1-\sigma}} \\
&\qquad\times
  \left(\int_0^T\A(t)\,dt\right)^{\frac{\sigma}{1-\sigma}}
  <\infty .
\end{aligned}
\]
Integrating \eqref{eq:mainMovingEstimate} and using \eqref{eq:Fbounded} gives
\begin{equation}\label{eq:Dintegrable}
  \int_0^T\D_{\mu,\eta}(t)\,dt<\infty.
\end{equation}
By the Hodge estimate \eqref{eq:HodgeEstimate},
\begin{equation}\label{eq:horizontalControl}
\begin{aligned}
&\|\nabla_{\be^\perp}u^{\be^\perp}\|_{L^\infty(0,T;L^2)}^2
+\|\nabla_{\be^\perp}b^{\be^\perp}\|_{L^\infty(0,T;L^2)}^2       \\
&\quad
+\|\nabla\nabla_{\be^\perp}u^{\be^\perp}\|_{L^2(0,T;L^2)}^2
+\|\nabla\nabla_{\be^\perp}b^{\be^\perp}\|_{L^2(0,T;L^2)}^2
<\infty.
\end{aligned}
\end{equation}

\section{Proof of the main theorem}\label{sec:proof_main}

\subsection{The \texorpdfstring{$H^1$}{H1} closure}\label{sec:H1_closure}

We now turn to the proof of Theorem~\ref{thm:main}.  The first step is to close
the estimate for the full $H^1$ norm, using the horizontal-gradient bounds from
Subsection~\ref{sec:horizontal_gradients}.  We begin with a structural bound for
the velocity cubic term, which is the only part requiring a separate argument.

\begin{lemma}\label{lem:cubicVelocity}
Let $v$ be smooth and divergence free.  In the moving orthonormal frame,
\begin{equation}\label{eq:cubicVelocityLemma}
\left|
\sum_{\ell,m\in\{\ta,\nv,\be\}}\int_{\R^3}
(\partial_\ell v\cdot\nabla v^m)\,\partial_\ell v^m\,dx
\right|
\le C\int_{\R^3}|\nabla v|^2|\nabla_{\be^\perp}v^{\be^\perp}|\,dx .
\end{equation}
\end{lemma}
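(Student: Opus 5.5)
The plan is a purely algebraic index count in the moving frame, with the divergence-free condition used only once. No integration by parts is needed: the bound \eqref{eq:cubicVelocityLemma} will hold pointwise for the integrands.

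First I rewrite the integrand in matrix form. Set $A_{ml}:=\partial_l v^m$ for $\ell,m\in\{\ta,\nv,\be\}$. Since the frame is orthonormal and constant in space, $\partial_\ell v\cdot\nabla v^m=\sum_k\partial_\ell v^k\,\partial_k v^m$. Hence the left-hand side of \eqref{eq:cubicVelocityLemma} equals
\[
\int_{\R^3}\sum_{k,\ell,m\in\{\ta,\nv,\be\}}A_{k\ell}\,A_{mk}\,A_{m\ell}\,dx .
\]
Every factor satisfies $|A_{ab}|\le|\nabla v|$. So it suffices to show that each of the $27$ monomials $A_{k\ell}A_{mk}A_{m\ell}$ contains at least one factor bounded pointwise by $C|\nabla_{\be^\perp}v^{\be^\perp}|$. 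The other two factors are then bounded by $|\nabla v|^2$, which gives the claim.

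There are two kinds of good factors.
\begin{itemize}
\item Any purely horizontal entry $A_{ab}$ with $a,b\in\{\ta,\nv\}$ is a component of $\nabla_{\be^\perp}v^{\be^\perp}$.
\item The vertical-vertical entry $A_{\be\be}=\partial_\be v^\be=d_v^\be$ is also good, because $\Div v=0$ gives, by \eqref{eq:2DHodgeIdentities}, $A_{\be\be}=-(\partial_\ta v^\ta+\partial_\nv v^\nv)$, so $|A_{\be\be}|\le 2|\nabla_{\be^\perp}v^{\be^\perp}|$.
\end{itemize}
Thus a factor $A_{ab}$ is bad only if exactly one of its two indices equals $\be$.

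The key step is a parity argument showing that no monomial consists of three bad factors. Suppose all three factors $A_{k\ell}$, $A_{mk}$, $A_{m\ell}$ had exactly one index equal to $\be$. Count the occurrences of $\be$ in the three index pairs $(k,\ell)$, $(m,k)$, $(m,\ell)$. On one hand this total is odd, being a sum of three ones. On the other hand each of $k,\ell,m$ appears in exactly two of the pairs, so the total equals $2\,\#\{i\in\{k,\ell,m\}:i=\be\}$, which is even. This contradiction shows that every monomial has a good factor.

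Summing the $27$ pointwise bounds $|A_{k\ell}A_{mk}A_{m\ell}|\le 2|\nabla v|^2|\nabla_{\be^\perp}v^{\be^\perp}|$ then proves \eqref{eq:cubicVelocityLemma} with $C=54$.

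The only real obstacle is noticing that the all-vertical entry $\partial_\be v^\be$ must be counted as horizontal via the divergence-free condition. Without that observation the monomial $A_{\be\be}^3$, and terms such as $A_{\be\ell}A_{\be\be}A_{\be\ell}$, would escape the count. Once it is made, the parity check closes the argument.
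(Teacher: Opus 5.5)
Your proof is correct and follows essentially the same route as the paper. Both check pointwise that every cubic monomial contains a factor from $\nabla_{\be^\perp}v^{\be^\perp}$, and both count $\partial_\be v^\be=-\nabla_{\be^\perp}\cdot v^{\be^\perp}$ as such a factor. The difference is only bookkeeping: the paper enumerates the four cases for $(\ell,m)$ and expands $\partial_\ell v\cdot\nabla v^m$ by hand, while your parity count over the index pairs $(k,\ell),(m,k),(m,\ell)$ handles all $27$ monomials at once.
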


\begin{proof}
Let $\alpha\in\{\ta,\nv\}$ denote a horizontal direction.  Since $\Div v=0$,
\[
  \partial_{\be}v^\be=-\partial_{\ta}v^\ta-\partial_{\nv}v^\nv.
\]
We examine the possible positions of the pair
$(\ell,m)\in\{\ta,\nv,\be\}\times\{\ta,\nv,\be\}$.
If $\ell,m\in\{\ta,\nv\}$, then $\partial_\ell v^m$ is a component of
$\nabla_{\be^\perp}v^{\be^\perp}$.  If $\ell=m=\be$, then
\[
  (\partial_{\be}v\cdot\nabla v^\be)\partial_{\be}v^\be
  =-(\partial_{\be}v\cdot\nabla v^\be)(\partial_{\ta}v^\ta+
    \partial_{\nv}v^\nv),
\]
which again contains the horizontal-gradient factor.  If $\ell=\be$ and
$m=\alpha$, then
\[
\begin{aligned}
(\partial_{\be}v\cdot\nabla v^\alpha)\partial_{\be}v^\alpha
&=\bigl(\partial_{\be}v^{\be^\perp}\cdot\nabla_{\be^\perp}v^\alpha
      +\partial_{\be}v^\be\partial_{\be}v^\alpha\bigr)\partial_{\be}v^\alpha  \\
&=\bigl(\partial_{\be}v^{\be^\perp}\cdot\nabla_{\be^\perp}v^\alpha
      -(\nabla_{\be^\perp}\cdot v^{\be^\perp})\partial_{\be}v^\alpha\bigr)
      \partial_{\be}v^\alpha .
\end{aligned}
\]
Finally, if $\ell=\alpha$ and $m=\be$, then
\[
\begin{aligned}
(\partial_\alpha v\cdot\nabla v^\be)\partial_\alpha v^\be
&=\bigl(\partial_\alpha v^{\be^\perp}\cdot\nabla_{\be^\perp}v^\be
      +\partial_\alpha v^\be\partial_{\be}v^\be\bigr)\partial_\alpha v^\be  \\
&=\bigl(\partial_\alpha v^{\be^\perp}\cdot\nabla_{\be^\perp}v^\be
      -(\nabla_{\be^\perp}\cdot v^{\be^\perp})\partial_\alpha v^\be\bigr)
      \partial_\alpha v^\be .
\end{aligned}
\]
The four alternatives above exhaust all pairs
$(\ell,m)\in\{\ta,\nv,\be\}\times\{\ta,\nv,\be\}$,
because each index is either transverse or equal to $\be$.  In each case one
factor is a component of $\nabla_{\be^\perp}v^{\be^\perp}$ and the remaining two
factors are bounded by $|\nabla v|$.  The constant is independent of the time
slice because the frame is orthonormal.  Summing over the finite frame proves
\eqref{eq:cubicVelocityLemma}.

\end{proof}

We now combine this velocity structure with the cancellations in the coupled
MHD terms.

\begin{lemma}\label{lem:MHDanisotropicH1}
For smooth divergence-free vector fields $u$ and $b$,
\begin{equation}\label{eq:MHDanisotropicH1}
\begin{aligned}
&\left| -\sum_{\ell,m\in\{\ta,\nv,\be\}}\int
(\partial_\ell u\cdot\nabla u^m)\partial_\ell u^m\,dx
 -\sum_{\ell,m\in\{\ta,\nv,\be\}}\int
(\partial_\ell u\cdot\nabla b^m)\partial_\ell b^m\,dx \right. \\
&\qquad\left.
 +\sum_{\ell,m\in\{\ta,\nv,\be\}}\int
(\partial_\ell b\cdot\nabla b^m)\partial_\ell u^m\,dx
 +\sum_{\ell,m\in\{\ta,\nv,\be\}}\int
(\partial_\ell b\cdot\nabla u^m)\partial_\ell b^m\,dx
\right|                                                        \\
&\qquad\le
C\int_{\R^3}\bigl(|\nabla_{\be^\perp}u|+|\nabla_{\be^\perp}b|\bigr)
\bigl(|\nabla u|^2+|\nabla b|^2\bigr)\,dx .
\end{aligned}
\end{equation}
\end{lemma}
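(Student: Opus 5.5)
The plan is to bound each of the four sums separately, term by term, without relying on any cancellation between the velocity and magnetic contributions. The right-hand side of \eqref{eq:MHDanisotropicH1} contains the full transverse gradients $|\nabla_{\be^\perp}u|$ and $|\nabla_{\be^\perp}b|$, that is, transverse derivatives of all components. This is weaker than the structure $|\nabla_{\be^\perp}v^{\be^\perp}|$ used in Lemma~\ref{lem:cubicVelocity}, so a pointwise argument along the same lines should suffice.

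Every integrand in \eqref{eq:MHDanisotropicH1} has the form
\[
  (\partial_\ell X\cdot\nabla Y^m)\,\partial_\ell Z^m
  =\sum_{k\in\{\ta,\nv,\be\}}\partial_\ell X^k\,\partial_k Y^m\,\partial_\ell Z^m,
  \qquad X,Y,Z\in\{u,b\},
\]
where the vector dot product has been expanded in the orthonormal moving frame, which is legitimate since the frame is orthonormal and independent of $x$. I would then split according to the positions of $\ell$ and $k$.

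\emph{Case $\ell\in\{\ta,\nv\}$.} The factor $\partial_\ell X^k$ is a component of $\nabla_{\be^\perp}X$. The other two factors are bounded by $|\nabla u|+|\nabla b|$.

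\emph{Case $\ell=\be$, $k\in\{\ta,\nv\}$.} Here $\partial_kY^m$ is a component of $\nabla_{\be^\perp}Y$, and the remaining factors are again full gradients.

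\emph{Case $\ell=k=\be$.} The term is $\partial_\be X^\be\,\partial_\be Y^m\,\partial_\be Z^m$. Since $\Div X=0$, we have $\partial_\be X^\be=-\partial_\ta X^\ta-\partial_\nv X^\nv$, which is bounded by $|\nabla_{\be^\perp}X|$.

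These cases exhaust all triples $(\ell,k,m)$. In each case one factor is bounded by $|\nabla_{\be^\perp}u|+|\nabla_{\be^\perp}b|$ and the other two by $|\nabla u|+|\nabla b|$. Using $(|\nabla u|+|\nabla b|)^2\le2(|\nabla u|^2+|\nabla b|^2)$ and summing the finitely many terms over $\ell,m,k$ and the four sums gives \eqref{eq:MHDanisotropicH1}. The constant is frame-independent because all component bounds use only orthonormality.

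The only step needing care is the doubly vertical case $\ell=k=\be$, where no transverse derivative is visible. This is where the divergence-free condition on the \emph{first} field $X$ is used, and it is available because both $u$ and $b$ are divergence free. I expect no genuine obstacle beyond this bookkeeping. The lemma needs only the pointwise bound, not the cancellations among the coupled MHD terms; those cancellations matter only in how the lemma is later used in the $H^1$ closure.
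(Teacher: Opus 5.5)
Your proposal is correct and follows the paper's proof: you expand each integrand into monomials $\partial_\ell X^k\,\partial_k Y^m\,\partial_\ell Z^m$, split into the same three cases on $(\ell,k)$, and use incompressibility of the first field in the case $\ell=k=\be$. The only difference is minor: the paper cites Lemma~\ref{lem:cubicVelocity} for the pure velocity sum, whereas you treat that sum with the same monomial argument, which suffices because the right-hand side contains the full transverse gradient.
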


\begin{proof}
The estimate is a frame-invariant form of the standard anisotropic $H^1$
structure for the incompressible MHD system; compare the horizontal-gradient
estimates in \cite{CaoWuMHD,JiaZhou2012Partial}.  We give the algebra in the
present moving frame.  The velocity self-interaction is covered by
Lemma~\ref{lem:cubicVelocity}.  For the remaining three sums, freeze the
orthonormal frame and expand the dot products.  Each cubic monomial is of the
form
\[
  \partial_\ell U^k\,\partial_k V^m\,\partial_\ell W^m,
  \qquad
  U,V,W\in\{u,b\},
  \quad \ell,k,m\in\{\ta,\nv,\be\}.
\]
If $\ell$ is transverse, then both the first and third factors carry a
transverse derivative.  If $\ell=\be$ but $k$ is transverse, then the middle
factor carries a transverse derivative.  In either situation one factor is
bounded by $|\nabla_{\be^\perp}u|+|\nabla_{\be^\perp}b|$ and the other two are
bounded by $|\nabla u|+|\nabla b|$.  The only configuration not covered by this
observation is $\ell=k=\be$.  Then the first factor is
$\partial_\be u^\be$ or $\partial_\be b^\be$, and incompressibility gives
\[
  \partial_\be u^\be=-\nabla_{\be^\perp}\cdot u^{\be^\perp},
  \qquad
  \partial_\be b^\be=-\nabla_{\be^\perp}\cdot b^{\be^\perp}.
\]
Thus this exceptional configuration also contains a transverse derivative of
$u$ or $b$.  Consequently every monomial is bounded pointwise by
\[
  C\bigl(|\nabla_{\be^\perp}u|+|\nabla_{\be^\perp}b|\bigr)
  \bigl(|\nabla u|+|\nabla b|\bigr)^2,
\]
and this is controlled by the right-hand side of
\eqref{eq:MHDanisotropicH1}.  The constant is independent of the time slice
because the frame is orthonormal.
\end{proof}

Let
\begin{equation}\label{eq:Y}
  Y(t)=\|\nabla u(t)\|_2^2+\|\nabla b(t)\|_2^2.
\end{equation}
The identity below is first derived in a fixed Cartesian coordinate system.  We
then rewrite the resulting rotationally invariant tensor contractions in the
instantaneous orthonormal frame.  Thus no time derivative of the moving frame
appears in this $H^1$ energy identity.
Applying $\nabla$ to \eqref{eq:MHD}, testing the velocity equation with
$\nabla u$ and the magnetic equation with $\nabla b$, and summing over the
differentiating index gives
\begin{equation}\label{eq:H1exact}
\begin{aligned}
\frac12\frac{d}{dt}Y(t)
&+\mu\|\nabla^2u\|_2^2+
\eta\|\nabla^2b\|_2^2                                      \\
&= -\sum_{\ell,m\in\{\ta,\nv,\be\}}\int
(\partial_\ell u\cdot\nabla u^m)\partial_\ell u^m\,dx               \\
&\quad -\sum_{\ell,m\in\{\ta,\nv,\be\}}\int
(\partial_\ell u\cdot\nabla b^m)\partial_\ell b^m\,dx               \\
&\quad +\sum_{\ell,m\in\{\ta,\nv,\be\}}\int
(\partial_\ell b\cdot\nabla b^m)\partial_\ell u^m\,dx               \\
&\quad +\sum_{\ell,m\in\{\ta,\nv,\be\}}\int
(\partial_\ell b\cdot\nabla u^m)\partial_\ell b^m\,dx .
\end{aligned}
\end{equation}
The transport terms do not contribute to the energy identity.  The terms with
$u$ as the transport field give
\[
  \frac12\sum_{\ell\in\{\ta,\nv,\be\}}\int
  u\cdot\nabla\bigl(|\partial_\ell u|^2+|\partial_\ell b|^2\bigr)\,dx=0
\]
by $\Div u=0$.  The two terms with $b$ as the transport field cancel together,
since
\[
  \sum_{\ell\in\{\ta,\nv,\be\}}\int b\cdot\nabla(\partial_\ell b\cdot\partial_\ell u)\,dx=0,
\]
by $\Div b=0$.  The pressure term vanishes after integration by parts and
$\Div u=0$.

From \eqref{eq:H1exact} and Lemma~\ref{lem:MHDanisotropicH1},
\begin{equation}\label{eq:H1identity}
\begin{aligned}
\frac12\frac{d}{dt}Y(t)
&+\mu\|\nabla^2u\|_2^2+
\eta\|\nabla^2b\|_2^2                                      \\
&\le
C\int_{\R^3}\bigl(|\nabla_{\be^\perp}u|+|\nabla_{\be^\perp}b|\bigr)
\bigl(|\nabla u|^2+|\nabla b|^2\bigr)\,dx .
\end{aligned}
\end{equation}
Set
\[
  Z(t)=\|\nabla^2u(t)\|_2^2+\|\nabla^2b(t)\|_2^2.
\]
By H\"older's inequality and Sobolev embedding,
\begin{equation}\label{eq:H1Holder}
\begin{aligned}
&\int \bigl(|\nabla_{\be^\perp}u|+|\nabla_{\be^\perp}b|\bigr)
\bigl(|\nabla u|^2+|\nabla b|^2\bigr)\,dx                         \\
&\quad\le
C\bigl(\|\nabla_{\be^\perp}u\|_{L^3}+\|\nabla_{\be^\perp}b\|_{L^3}\bigr)
Y(t)^{1/2}Z(t)^{1/2} .
\end{aligned}
\end{equation}
We split the transverse gradients into transverse components and scalar
components:
\[
  \nabla_{\be^\perp}u=(\nabla_{\be^\perp}u^{\be^\perp},\nabla_{\be^\perp}u^\be),
  \qquad
  \nabla_{\be^\perp}b=(\nabla_{\be^\perp}b^{\be^\perp},\nabla_{\be^\perp}b^\be).
\]
The scalar pieces are controlled by the criterion quantities,
\begin{equation}\label{eq:scalarHorizontalL3}
  \|\nabla_{\be^\perp}u^\be\|_{L^3}^2
  \lesssim \|u^\be(t)\|_{\dot H^{3/2}}^2,
  \qquad
  \|\nabla_{\be^\perp}b^\be\|_{L^3}^2
  \lesssim \|b^\be(t)\|_{\dot H^{3/2}}^2.
\end{equation}
For the transverse vector pieces, interpolation gives
\begin{equation}\label{eq:horizontalVectorL3}
\begin{aligned}
\|\nabla_{\be^\perp}u^{\be^\perp}\|_{L^3}^2
&\lesssim
\|\nabla_{\be^\perp}u^{\be^\perp}\|_2
\|\nabla\nabla_{\be^\perp}u^{\be^\perp}\|_2,\\
\|\nabla_{\be^\perp}b^{\be^\perp}\|_{L^3}^2
&\lesssim
\|\nabla_{\be^\perp}b^{\be^\perp}\|_2
\|\nabla\nabla_{\be^\perp}b^{\be^\perp}\|_2.
\end{aligned}
\end{equation}
Combining \eqref{eq:H1Holder}--\eqref{eq:horizontalVectorL3} and applying
Young's inequality, we obtain
\begin{equation}\label{eq:H1Gronwall}
\frac{d}{dt}Y(t)
+\mu\|\nabla^2u\|_2^2
+\eta\|\nabla^2b\|_2^2
\le
C_\lambda V(t)Y(t),
\end{equation}
where
\begin{equation}\label{eq:V}
\begin{aligned}
V(t)&=\|u^\be(t)\|_{\dot H^{3/2}}^2
+\|b^\be(t)\|_{\dot H^{3/2}}^2
+\|\nabla_{\be^\perp}u^{\be^\perp}(t)\|_2
 \|\nabla\nabla_{\be^\perp}u^{\be^\perp}(t)\|_2       \\
&\quad
+\|\nabla_{\be^\perp}b^{\be^\perp}(t)\|_2
 \|\nabla\nabla_{\be^\perp}b^{\be^\perp}(t)\|_2 .
\end{aligned}
\end{equation}
By \eqref{eq:finiteQT} and \eqref{eq:horizontalControl}, $V\in L^1(0,T)$.
Gronwall's inequality gives
\begin{equation}\label{eq:H1Bound}
  \sup_{0\le t<T}Y(t)
  +\int_0^T\bigl(\mu\|\nabla^2u\|_2^2+
  \eta\|\nabla^2b\|_2^2\bigr)\,dt
  <\infty.
\end{equation}

\subsection{Completion of the proof}\label{subsec:proof_completion}

We use the following standard continuation criterion: the local $H^1$ theory and
the usual blow-up alternative for strong MHD solutions imply that, if
\[
  \sup_{0<t<T}\bigl(\|u(t)\|_{H^1}+\|b(t)\|_{H^1}\bigr)<\infty,
\]
then the solution extends beyond $T$; see
\cite{DuvautLions,SermangeTemam}.

\begin{lemma}\label{lem:smoothPassage}
Let $(u,b)$ be an $H^1$ strong solution of \eqref{eq:MHD} on $[0,T]$, and let
$\be\in\Om(T)$.  If
\[
  \int_0^T\Bigl(
  \|u^\be(t)\|_{\dot H^{3/2}}^2
  +\|b^\be(t)\|_{\dot H^{3/2}}^2
  +\|j(t)\|_{L^p}^{\gamma_p}\Bigr)\,dt<\infty,
\]
then the integral estimates \eqref{eq:Fbounded}, \eqref{eq:Dintegrable}, and
\eqref{eq:H1Bound}, proved above for smooth solutions, also hold for $(u,b)$.
\end{lemma}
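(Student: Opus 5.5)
The plan is a standard regularization-and-limit argument. The constants in Proposition~\ref{prop:movingEstimate}, Lemma~\ref{lem:iteration} and the $H^1$ closure depend only on $E_0$, $\lambda$, $p$, $f(0)$, and the integrals of the criterion quantity, of $|\be'|^2$ (together with $|\ta'|^2,|\nv'|^2$), and of $\A$. So it suffices to produce smooth approximations for which all of these data are uniformly bounded, and then pass to the limit by lower semicontinuity.

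\textbf{Step 1: choice of approximation.} Fix $0<T'<T$. Since $(u,b)\in C([0,T];H^1)\cap L^2(0,T;\dot H^2)$, parabolic smoothing gives $(u,b)\in C^\infty((0,T]\times\R^3)$ with all higher Sobolev norms finite on $[\delta,T]$ for each $\delta>0$. On $[\delta,T]$, therefore, the computations of Sections~\ref{sec:vorticity_current}--\ref{sec:proof_main} are fully justified for $(u,b)$ itself.

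This handles every time interval away from $0$. Near $t=0$ we need more care, because only $\F(\delta)$ and $Y(\delta)$ enter the bounds. We have $\F(\delta)\lesssim Y(\delta)$ by the Hodge estimate, and $Y(\delta)\to Y(0)$ by $H^1$ continuity. So applying Lemma~\ref{lem:iteration} on $[\delta,T)$ gives a bound depending only on $\sup_{\delta}Y(\delta)$ and on integrals over $[0,T]$, which is uniform in $\delta$.

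\textbf{Step 2: handling the frame jumps.} On each continuity interval we restart as in Subsection~\ref{sec:horizontal_gradients}, using $H^1$ continuity at the jump times. Then letting $\delta\to0$ yields \eqref{eq:Fbounded}, \eqref{eq:Dintegrable} and \eqref{eq:H1Bound} on $[0,T)$ by monotone convergence.

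\textbf{Step 3: justifying the differential inequality.} We need Proposition~\ref{prop:movingEstimate} as a differential inequality holding a.e. in $t$, with $\F\in W^{1,1}_{\rm loc}$. On each continuity interval $\be\in H^1$, hence $\be$ is absolutely continuous. The frame-derivative terms from Lemma~\ref{lem:commutators} are then products of an $L^2_t$ coefficient with smooth-in-$t$ quantities. So $\F$ is absolutely continuous and the chain rule holds a.e. The $L^2$ pairings are legitimate because $\omega,d,j,h\in L^2_tH^1$ on $[\delta,T]$.

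\textbf{Alternative if interior regularity is not taken for granted.} One could mollify the initial data, $(u_0^\epsilon,b_0^\epsilon)\to(u_0,b_0)$ in $H^1$, and use continuous dependence of strong solutions in $C([0,T];H^1)\cap L^2\dot H^2$. The main obstacle in that route is the uniform control of the criterion quantity for the approximations. The $\dot H^{3/2}$ norms of $u^{\epsilon,\be}$ are only controlled by $\|u^\epsilon\|_{\dot H^{3/2}}$, which converges in $L^2_t$ but is not small, and the $L^p$ norm of $j^\epsilon$ need not converge in $L^{\gamma_p}_t$. This is exactly why I prefer the interior-smoothing route: it works with $(u,b)$ itself, so the criterion integrals are literally the assumed ones and no approximation of them is needed.
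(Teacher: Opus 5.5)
Your argument is correct, but it justifies the estimates by a different mechanism from the paper's. The paper regularizes in space. On a compact subinterval $I'$ of a continuity interval it applies a Friedrichs cutoff $S_N$ and runs the estimates on the truncated quantities with $N$-independent constants, bounding the criterion coefficients of $S_N(u,b)$ by the original ones through boundedness of $S_N$ on $\dot H^{3/2}$ and $L^p$. It then lets $N\to\infty$ in the integrated identities, using $S_N(u,b)\to(u,b)$ in $C(I';H^1)\cap L^2(I';\dot H^2)$ and weak lower semicontinuity, and finally exhausts $I$ by $I'$.

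You instead use the instantaneous smoothing of $H^1$ strong MHD solutions, valid for any $\mu,\eta>0$. On $[\delta,T]$ every computation is then classical for $(u,b)$ itself, and the only limit is $\delta\to0$. That limit needs only $H^1$-continuity of $Y$ at $t=0$ and at the jump times, together with $\F\lesssim Y$. The latter is pointwise trivial, because $\omega,d,j,h$ are frame components of $\nabla u,\nabla b$; the Hodge estimate is not needed.

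What your route buys is that nothing is ever approximated. The criterion integrals are literally the assumed ones, and the cancellations (transport, cross-advection, the $m=\be$ Jacobian cancellation) hold exactly. You also avoid two delicate points in the paper's sketch. First, $S_N(u,b)$ solves MHD only up to commutators such as $S_N(u\cdot\nabla u)-S_Nu\cdot\nabla S_Nu$, so the cancellations hold only after passing to the limit. Second, a sharp Friedrichs projection is not bounded on $L^p$ for $p\neq2$ (ball multiplier theorem), so the comparison of $\|j\|_{L^p}$ norms requires a smooth cutoff. The price of your route is reliance on the higher-regularity theory for strong solutions. The paper's route, once those points are handled, stays entirely at the $C_tH^1\cap L^2_t\dot H^2$ level.

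Your Step 3 is also fine, though it could be stated more directly. $\F(t)$ is a quadratic form in $\be(t)$ whose coefficients, such as $\int(\curl u)_i(\curl u)_k\,dx$, are $C^1$ in $t$ on $[\delta,T]$. Absolute continuity of $\be$ on each continuity interval therefore gives $\F\in W^{1,1}$ there, with the chain rule a.e.
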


\begin{proof}
Let $I'\Subset I$ be a compact subinterval of a continuity interval of the
frame.  Apply the Friedrichs projection $S_N$ in space to \eqref{eq:MHD}.  The
projected system is smooth on $I'$, and the estimates of
Sections~\ref{sec:vorticity_current}--\ref{sec:moving_apriori} and
Subsection~\ref{sec:H1_closure} apply with constants independent of $N$.  Since
$S_N(u,b)\to(u,b)$ in
$C(I';H^1)\cap L^2(I';\dot H^2)$, the transport, stretching, pressure, and
diffusion terms converge in the corresponding integral identities.  The
Calder\'on--Zygmund, Hodge, and anisotropic product estimates are stable under
$S_N$, and the criterion coefficients are bounded by the original ones because
$S_N$ is bounded on $\dot H^{3/2}$ and on $L^p$.  Passing to the limit, using
weak lower semicontinuity for the dissipative terms, gives the same estimates on
$I'$.  Letting $I'$ exhaust $I$ proves the claim on each continuity interval.
The finitely many jump times are then handled by the $H^1$ continuity of the
solution.
\end{proof}

\begin{proof}[Proof of Theorem~\ref{thm:main}]
We prove the continuation statement; the blow-up statement follows by
contraposition.  Fix $T<T_*$.  By Lemma~\ref{lem:smoothPassage}, the estimates
established in
Sections~\ref{sec:vorticity_current}--\ref{sec:moving_apriori} and
Subsection~\ref{sec:H1_closure} apply to the present $H^1$ strong solution on
each continuity interval of $\be$.  The finitely many frame jumps are handled by
the $H^1$ continuity of the solution.  Hence \eqref{eq:Fbounded},
\eqref{eq:Dintegrable}, and \eqref{eq:H1Bound} hold on every interval
$[0,T]\subset[0,T_*)$.

Assume now that \eqref{eq:criterionAssumption} is finite on $[0,T_*)$.  The
constants in the preceding estimates depend only on the energy, on the total
integrals of the three criterion norms, on
\[
  \int_0^T\bigl(|\ta'(t)|^2+|\nv'(t)|^2+|\be'(t)|^2\bigr)\,dt
  \qquad\hbox{and}\qquad
  \int_0^T\A(t)\,dt,
\]
on the fixed exponent $p$, and on the finite list of frame jumps.  Hence they
are independent of $T<T_*$.  Letting $T\uparrow T_*$ gives
\begin{equation}\label{eq:uniformH1}
  \sup_{0<t<T_*}\bigl(\|u(t)\|_{H^1}+\|b(t)\|_{H^1}\bigr)<\infty.
\end{equation}
The standard continuation criterion then extends the solution beyond $T_*$.  This
contradicts maximality.  Therefore finite-time breakdown implies
\eqref{eq:blowupCriterion}.
\end{proof}

\appendix

\section{Algebraic expansions in the moving frame}\label{app:expansions}

This appendix records the algebraic computations used in the proof.  On a
continuity interval we freeze the orthonormal frame and write all components and
derivatives directly in the notation $(\ta,\nv,\be)$.  The time-dependence of
the frame only produces the commutators denoted by
$\mathcal C_\omega,\mathcal C_d,\mathcal C_j,\mathcal C_h$ in the main text.

\subsection{Scalar equations in the frozen moving frame}\label{app:fixedScalarEquations}
On a continuity interval, after freezing the moving frame, set
\[
  \omega=\partial_{\ta} u^\nv-\partial_{\nv} u^\ta,\qquad
  j=\partial_{\ta} b^\nv-\partial_{\nv} b^\ta,\qquad
  d=\partial_{\be} u^\be,\qquad h=\partial_{\be} b^\be .
\]
The $\be$-component of the vorticity equation gives
\begin{equation}\label{eq:appendixOmegaEquation}
\begin{aligned}
\partial_t\omega+u\cdot\nabla\omega-b\cdot\nabla j-\mu\Delta\omega
&=d\omega-\partial_{\be} u^{\be^\perp}\cdot\nabla_{\be^\perp}^{\perp} u^\be
  -hj+\partial_{\be} b^{\be^\perp}\cdot\nabla_{\be^\perp}^{\perp} b^\be .
\end{aligned}
\end{equation}
Indeed, the horizontal derivatives of $u^\be$ cancel in the self-interaction,
and the same computation with $b$ gives the last two terms.

Applying $\partial_{\be}$ to the $\be$-component of the velocity equation gives
\begin{equation}\label{eq:appendixDEquation}
\partial_td+u\cdot\nabla d-b\cdot\nabla h-\mu\Delta d
= -\partial_{\be} u\cdot\nabla u^\be
+\partial_{\be} b\cdot\nabla b^\be-\partial_{\be}^2P .
\end{equation}
The pressure is determined by
\[
  -\Delta P=Q_u-Q_b,
  \qquad
  Q_u=\sum_{\ell,m\in\{\ta,\nv,\be\}}\partial_\ell u^m\partial_m u^\ell,
  \qquad
  Q_b=\sum_{\ell,m\in\{\ta,\nv,\be\}}\partial_\ell b^m\partial_m b^\ell .
\]
The terms containing only $u$ in \eqref{eq:appendixDEquation} are the
Navier--Stokes terms; the terms containing $b$ are split in
Appendix~\ref{app:pressureSplit}.

Finally, applying $\partial_{\be}$ to the $\be$-component of the induction
equation yields
\begin{equation}\label{eq:appendixHEquation}
\begin{aligned}
\partial_th+u\cdot\nabla h-b\cdot\nabla d-\eta\Delta h
&=-\partial_{\be} u\cdot\nabla b^\be+\partial_{\be} b\cdot\nabla u^\be  \\
&=-\partial_{\be} u^{\be^\perp}\cdot\nabla_{\be^\perp} b^\be
  +\partial_{\be} b^{\be^\perp}\cdot\nabla_{\be^\perp} u^\be,
\end{aligned}
\end{equation}
where the scalar terms $-dh$ and $hd$ cancel exactly.  Equations
\eqref{eq:appendixOmegaEquation}--\eqref{eq:appendixHEquation}, together
with the current-density equation below, justify the structural system
\eqref{eq:structure}.

\subsection{The current-density equation}\label{app:current}
Let
\[
  \omega=\partial_{\ta} u^\nv-\partial_{\nv} u^\ta,
  \qquad j=\partial_{\ta} b^\nv-\partial_{\nv} b^\ta,
  \qquad d=\partial_{\be} u^\be,
  \qquad h=\partial_{\be} b^\be.
\]
The vector vorticity-current formulation gives
\[
\partial_tJ+u\cdot\nabla J-b\cdot\nabla\Omega-\eta\Delta J
=J\cdot\nabla u-\Omega\cdot\nabla b
+2\sum_{\ell\in\{\ta,\nv,\be\}}\nabla b^\ell\times\nabla u^\ell .
\]
Taking the $\be$-component yields
\begin{equation}\label{eq:appendixCurrentStart}
\begin{aligned}
&\partial_tj+u\cdot\nabla j-b\cdot\nabla\omega-\eta\Delta j  \\
&\quad =J\cdot\nabla u^\be-\Omega\cdot\nabla b^\be
+2\sum_{m\in\{\ta,\nv,\be\}}\bigl(\partial_{\ta} b^m\partial_{\nv} u^m-
\partial_{\nv} b^m\partial_{\ta} u^m\bigr).
\end{aligned}
\end{equation}
Using
\[
J^\ta=\partial_{\nv} b^\be-\partial_{\be} b^\nv,
\quad J^\nv=\partial_{\be} b^\ta-\partial_{\ta} b^\be,
\quad J^\be=j,
\]
and the analogous identities for $\Omega$, the stretching part equals
\begin{equation}\label{eq:stretchingCurrent}
\begin{aligned}
J\cdot\nabla u^\be-\Omega\cdot\nabla b^\be
&=dj-h\omega
+\partial_{\be} u^{\be^\perp}\cdot\nabla_{\be^\perp}^{\perp} b^\be
-\partial_{\be} b^{\be^\perp}\cdot\nabla_{\be^\perp}^{\perp} u^\be       \\
&\quad +2\partial_{\nv} b^\be\partial_{\ta} u^\be
-2\partial_{\ta} b^\be\partial_{\nv} u^\be .
\end{aligned}
\end{equation}
The $m=\be$ term in the Jacobian sum in \eqref{eq:appendixCurrentStart} is
\[
2\bigl(\partial_{\ta} b^\be\partial_{\nv} u^\be-
\partial_{\nv} b^\be\partial_{\ta} u^\be\bigr),
\]
which cancels the last line of \eqref{eq:stretchingCurrent}.  Thus
\[
\partial_t j+u\cdot\nabla j-b\cdot\nabla\omega-\eta\Delta j
=dj-h\omega+
\partial_{\be} u^{\be^\perp}\cdot\nabla_{\be^\perp}^{\perp} b^\be
-\partial_{\be} b^{\be^\perp}\cdot\nabla_{\be^\perp}^{\perp} u^\be
+2\sum_{\ell\in\{\ta,\nv\}}J_\be(b^\ell,u^\ell),
\]
which is the third equation in Lemma~\ref{lem:movingScalarSystem} without the
frame commutator.

\subsection{The magnetic pressure contribution in the \texorpdfstring{$d$}{d} equation}\label{app:pressureSplit}
The $\be$-component of the velocity equation is
\[
\partial_tu^\be+u\cdot\nabla u^\be-b\cdot\nabla b^\be-
\mu\Delta u^\be+\partial_{\be} P=0.
\]
Applying $\partial_{\be}$ gives
\[
\partial_td+u\cdot\nabla d-b\cdot\nabla h-
\mu\Delta d
= -\partial_{\be} u\cdot\nabla u^\be+
\partial_{\be} b\cdot\nabla b^\be-\partial_{\be}^2P.
\]
The pressure satisfies
\[
  -\Delta P=Q_u-Q_b,
  \qquad
  Q_u=\sum_{\ell,m\in\{\ta,\nv,\be\}}\partial_\ell u^m\partial_m u^\ell,
  \qquad
  Q_b=\sum_{\ell,m\in\{\ta,\nv,\be\}}\partial_\ell b^m\partial_m b^\ell .
\]
Thus the magnetic part of $-\partial_{\be}^2P$ is, up to the sign convention for
$(-\Delta)^{-1}$,
\[
  \partial_{\be}^2(-\Delta)^{-1}Q_b.
\]
The direct magnetic term decomposes as
\[
  \partial_{\be} b\cdot\nabla b^\be
  =h^2+\partial_{\be} b^{\be^\perp}\cdot\nabla_{\be^\perp} b^\be,
\]
and the pressure quadratic form decomposes into horizontal-horizontal,
horizontal-vertical, and vertical-vertical pieces:
\begin{equation}\label{eq:appendixQbSplit}
  Q_b
  =\sum_{\ell,m\in\{\ta,\nv\}}\partial_\ell b^m\partial_m b^\ell
  +2\sum_{\ell\in\{\ta,\nv\}}\partial_\ell b^\be\partial_{\be} b^\ell
  +h^2.
\end{equation}
This is the split used in \eqref{eq:pressureSplitB}.  The first term in
\eqref{eq:appendixQbSplit} is estimated by the horizontal Hodge control of
$b^{\be^\perp}$, while the latter two terms are either algebraic or of the mixed
anisotropic type covered by Lemma~\ref{lem:LZproduct}.

\section{Finite jumps of the moving direction}\label{app:jumps}

Let the jump times of $\be$ in $[0,T]$ be
\[
  0=t_0<t_1<\cdots<t_N<t_{N+1}=T.
\]
On each open interval $(t_k,t_{k+1})$, $0\le k\le N$, we choose a moving frame
satisfying \eqref{eq:frame}--\eqref{eq:frameDerivative}.  The estimates of
Sections~\ref{sec:vorticity_current}--\ref{sec:moving_apriori} and
Subsection~\ref{sec:H1_closure} are local in time on that interval.  At $t_k$
the strong solution is continuous in $H^1$, hence
\[
\F(t_k^+)
\lesssim \|\nabla u(t_k)\|_2^2+\|\nabla b(t_k)\|_2^2<\infty,
\]
with a constant independent of the new orthonormal frame.  Therefore the
argument can be restarted at $t_k$ with the new frame.  Since $N<\infty$, a
finite induction propagates the bounds \eqref{eq:Fbounded},
\eqref{eq:Dintegrable}, and \eqref{eq:H1Bound} through all jumps.

\section*{Declaration of generative AI and AI-assisted technologies in the manuscript preparation process}
During the preparation of this work the authors used OpenAI's ChatGPT to assist
with manuscript editing, organization, and language refinement.  After using this
tool, the authors reviewed and edited the content as needed and take full
responsibility for the content of the publication.

\end{document}